\documentclass[letterpaper, 12pt]{article}

\pdfoutput=1 

\usepackage{fullpage}
\usepackage{graphicx}
\usepackage{latexsym}
\usepackage{amsfonts,amsmath,amssymb,amsthm,bm} 

\usepackage[pdftex,bookmarks,colorlinks,breaklinks]{hyperref}  
\usepackage{color}
\definecolor{dullmagenta}{rgb}{0.4,0,0.4}   
\definecolor{darkblue}{rgb}{0,0,0.4}
\hypersetup{linkcolor=red,citecolor=blue,filecolor=dullmagenta,urlcolor=darkblue} 

\newcommand{\ie}{{\it i.e.}}
\newcommand{\Res}{{\rm Res}}

\newcommand{\x}{\mathbf{x}}
\newcommand{\y}{\mathbf{y}}
\newcommand{\ud}{\,\mathrm{d}}
\newcommand{\calA}{\mathcal{A}}
\renewcommand{\imath}{i}
\newcommand{\warrow}{\rightharpoonup}

\newtheorem{thm}{Theorem}[section]

\newtheorem{prop}[thm]{Proposition}
\newtheorem{lem}[thm]{Lemma}
\theoremstyle{definition}
\newtheorem{defn}{Definition}[section]
\theoremstyle{remark}
\newtheorem{rem}[thm]{\bf Remark}

\newtheorem{conj}{\bf Conjecture}[section]

\graphicspath{{arXiv/}} 

\title{Long-lived Scattering Resonances  and Bragg Structures}
\author{Braxton Osting and Michael I. Weinstein}
\date{\today}
 
\begin{document}
\maketitle

\noindent {\bf Keywords:} spectral optimization, quality factor, Helmholtz equation, resonance, Bragg condition, Fabry-P\'erot cavity, spherical resonator, quarter-wave stack

\begin{abstract}
We consider a system governed by the wave equation with index of refraction $n(\x)$, taken to be variable within a bounded region $\Omega\subset \mathbb R^d$, and constant in $\mathbb R^d \setminus \Omega$.  The solution of  the time-dependent wave equation with initial data, which is localized  in $\Omega$,  spreads and decays with advancing time. This rate of decay can be measured (for $d=1,3$, and more generally, $d$ odd) in terms of the eigenvalues of the scattering resonance problem, a non-selfadjoint eigenvalue problem governing the time-harmonic solutions of the wave (Helmholtz) equation which are {\it outgoing} at $\infty$. Specifically, the rate of energy escape from $\Omega$ is governed by the complex scattering eigenfrequency, which is closest to the real axis. We study the structural design problem: Find a refractive index profile $n_\star(\x)$ within an admissible class which has a scattering frequency with minimal imaginary part. The admissible class is defined in terms of the compact support of $n(\x)-1$ and pointwise upper and lower (material) bounds on $n(\x)$ for $\x \in \Omega$, \ie, $0 < n_- \le n(\x) \le n_+ < \infty$. We formulate this problem as a constrained optimization problem and prove that an optimal structure, $n_\star(\x)$ exists. Furthermore, $n_\star(\x)$ is piecewise constant and achieves the material bounds, \ie,  $n_\star(\x)\in\{n_-, n_+ \} $. In one dimension, we establish a connection between $n_\star(x)$ and the well-known class of  {\it Bragg structures},   where  $n(x)$ is constant on intervals whose length is one-quarter of the effective wavelength.
\end{abstract}

\section{Introduction and overview}
\label{sec:introduction}

Many device applications, ranging from photonic to micro-mechanical require the controlled localization of energy within a compact region of space or ``cavity''.
In such settings, an important   performance-limiting loss mechanism is {\it scattering loss}, leakage from or tunneling 
 out of the structure. We have in mind applications to wave phenomena in non-dissipative media governed by time-dependent wave equations arising, for example, in (i) electromagnetic waves in dielectric media, (ii) acoustic waves, and (iii) elastic waves. An important class of motivating examples concerns the control of light via micro- and nano-scale photonic crystal devices. For example,  see,  \cite{pc-book,BVLMTW07}.  
 
Thus, the  following optimization problem naturally arises:

 \noindent {\it Given constraints on  material parameters
  and the size of the structure surrounding the cavity, how does one design a structure which maximizes the confinement time of energy?}\

We next explain how the confinement-time of energy in a cavity can be expressed in terms of the imaginary parts of complex eigenvalue of the non-selfadjoint  scattering resonance problem (SRP). We then formulate the optimization problem, summarize the results of this paper, and review related work.

\subsection{Energy escape and the scattering resonance problem}
Our point of departure is the time-dependent wave equation for an inhomogeneous medium:
 \begin{align}
\label{eqn:wave}
n^2(\x) \ \partial_t^2 v(\x,t) = \Delta  v(\x,t) &  \quad \quad  \x \in \mathbb R^d. 
\end{align}
Here, $n(\x)$ denotes a spatially varying index of refraction,
\footnote{ The wave equation in an inhomogeneous medium with position-dependent propagation speed, $c(\x)$, is $\left( c^{-2}(\x)\partial_t^2-\Delta\right)v(\x,t)=0$. Let $c_0$ be the ``background" homogeneous medium wave speed. The index of refraction, $n$,  is defined by $n(\x)=c_0/c(\x)$. Working in non-dimensionalized  units where $c_0=1$, the wave equation becomes \eqref{eqn:wave}. 
%
} 
which we assume to satisfy upper and lower bounds: $ 0\ <\ n_- \le \ n(\x) \ \le n_+\ <\ \infty $.
We consider  structures, which are supported in a fixed compact set, \ie,
$\textrm{supp}\left(n(\x)-1\right)\ =\ \overline{\Omega},
$
where $\Omega$ is a bounded open subset of $\mathbb{R}^d$.

Solutions to the Cauchy problem for the wave equation \eqref{eqn:wave} with localized initial data conserve the energy: 
\begin{equation}
E\left[v(\cdot,t),\partial_tv(\cdot,t)\right] :=  \int_{\mathbb{R}^d} n^2(\x) |\partial_tv(\x,t)|^2 + |\nabla v(\x,t)|^2 \ud \x.
\label{conservation}
\end{equation}
Yet, such solutions 
 decay to zero as $t\to\infty$ in the pointwise or {\it 
 local energy} sense:
 \begin{equation*}
 \textrm{for any compact subset}\ K\subset\mathbb{R}^d,\ \ \int_K\ |v(\x,t)|^2\ d\x\ \to\ 0,\ \  \ {\rm as}\ \ t\to\infty.
 \end{equation*}
The rate of local energy decay or transiency of energy in a bounded set \cite{LaxPhillips} can be derived by studying the solution of the initial value problem, expressed in terms of an inverse Laplace transform of the form 
\begin{equation*}
v(\x,t)\ \sim\ \int_{i\kappa-\infty}^{i\kappa+\infty}\ e^{-i \omega t}\ (\Delta + n^2 \omega^2)^{-1}\ d\omega  \circ  v_0,   \quad  \kappa>0,
\end{equation*}
where $v_0$ is determined by the Cauchy data at $t=0$.
 The resolvent kernel, $(\Delta + n^2 \omega^2)^{-1}(\x,\y)$,  has no poles in the upper half plane.
In  spatial dimensions $d=1,3$,  it has a meromorphic continuation to the lower half plane, with only pole singularities. In spatial dimension $d=2$, the resolvent kernel has a branch cut \cite{Melrose1995}. 
These poles are called {\it scattering resonances}, {\it scattering frequencies}, or {\it scattering poles}. A corresponding solution is  referred to as a scattering resonance mode. Other terms used are: quasi-normal mode, quasi-mode, or quasi-bound state.

Due to the time-dependence $e^{-i\omega t}$ in the inverse Laplace transform representation of the solution of the time-dependent initial value problem,  time decay can be shown by deforming the contour into the lower half plane to a parallel contour along which the imaginary part is slightly larger than that of the  scattering resonance which is closest to the real  $\omega$-axis, {\ie}, the pole, 
 $\omega_\star[n]$,  that is closest to the real $\omega$-axis gives rise  to the exponential decay rate $\sim \exp\left(-\left|\Im\omega_\star[n] \right| t\right)$.  
 See, for example,  \cite{LaxPhillips,Tang:2000kb,Dolph-Cho:80,Lenoir-etal:92,Brill-Gaunaurd:87} for detailed discussions of the role of scattering resonances and their characterizations. 
  $\left|\Im \omega_\star[n]\right|$ is called the \emph{width} of the resonance and  $\tau := |\Im \omega_\star|^{-1}$ is called its \emph{lifetime}. 
\bigskip

To give a precise definition of scattering resonance solutions of the wave equation
 \eqref{eqn:wave} associated with the structure $n(\x)$, for which $n^2(\x)-1$ has compact support, we first introduce the  free space $d$-dimensional  outgoing Green's function with pole at $\x$:
\begin{equation}
\label{eqn:explicitG-1}
G(|\x-\y|,\omega) = \begin{cases}
-(2 i \omega)^{-1} \exp(i \omega |x-y|) & d=1 \\
 -(4 i )^{-1} H_0^{(1)}(\omega |\x-\y|)& d=2 \\
(4\pi|\x-\y|)^{-1} \exp(i \omega |\x-\y|) & d=3. 
\end{cases}
\end{equation}
$G(|\x-\y|,\omega)$ satisfies $\left(-\Delta_\y-\omega^2\right)G(|\x-\y|,\omega)=\delta(\x-\y)$ and, for $\omega$ real, is outwardly radiating.

A scattering resonance solution is a solution of the wave equation of the form $u(\x;\omega)e^{-i\omega t}$ which is {\it outgoing}.  In particular, $u(\x;\omega)$ satisfies the {\it Helmholtz equation}:
\begin{equation}
\left(\ \Delta\ +\ \omega^2 n^2(\x)\ \right) u(\x;\omega)\ =\ 0
\label{helm}\end{equation}
Formally, writing $n^2(\x)$ as $n^2(\x)=1 + \left(n^2(\x)-1\right)$ and apply the outgoing Green's operator to \eqref{helm} yields the equation:
\begin{align}
\label{SRP}
u(\x;\omega) =  \omega^2 \int_{\Omega} G(|\x-\y|,\omega) \ [n^2(\y) -1 ] \ u(\y;\omega ) \ud \y\ .
\end{align}
A locally integrable function $u(\x,\omega)$ which solves \eqref{SRP} is a weak solution of the Helmholtz equation, \eqref{helm}. Note that Equation \eqref{SRP} need only be solved for $\x\in \Omega$. For  $\x\notin \Omega$, \eqref{SRP} gives an explicit expression for $u(\x)$ in terms of $u(\x)$ for $\x\in \Omega$.
We shall be particularly interested in bounded and piecewise constant $n(\x)$. In this case, a solution of \eqref{SRP} is at least $C^1(\mathbb{R}^d)\cap H^2_{\rm loc}(\mathbb{R}^d)$. The outgoing condition is encoded in  $u(\x;\omega)$ being in the range of the outgoing Green's operator, a consequence of \eqref{SRP}.

\medskip

\begin{defn}\label{res-def}
\begin{enumerate}
\item We refer to the integral equation \eqref{SRP} as the scattering resonance problem 
 for the structure, $n(\x)$, (SRP).\
\item The pair $\left(\omega,u(\x;\omega)\right)$ is a scattering resonance pair if 
$\omega\in\mathbb{C}$ and $u(\x;\omega)$  is a non-trivial $L^1_{\rm loc}$ solution of \eqref{SRP}.   $u(\x,\omega)$ is called a {\it scattering resonance mode}  with  corresponding to a scattering frequency $\omega$. 
\item The set of scattering resonances for the structure $n(\x)$ is denoted $\Res_n$: 
\begin{equation}
\label{eqn:allRes} 
\Res_n := \{ \text{the set of all (complex) eigenvalues},\  \omega, \text{ of \eqref{SRP} for  index} \  n(\x) \}
\end{equation}
\end{enumerate}
\end{defn}
\medskip


The set of scattering resonance frequencies, ${\rm Res}_{n}$,  is discrete 
 and lies in the open lower half plane,  $\Im\omega<0$.
If $(\omega,u)$ is a scattering resonance pair, then so is $(-\overline{\omega}, \overline{u})$; the set $\Res_n$ is is symmetric about the imaginary axis.
 The set ${\rm Res}_{n}$ may be empty, as in the case where $n(\x)\equiv1$ or may be non-empty, as in the explicit dimension d=1,2,3 examples in Appendix \ref{sec:simple-example}. In the examples presented in dimensions $2$ and $3$, $n^2(\x)$ is radially symmetric. The SRP therefore breaks into independent radially symmetric SRP's corresponding to the independent  spherical harmonics.
 The scattering resonance frequency is independent of the particular spherical harmonic and thus we see from such examples  that resonances may have multiplicity larger than one.
 \medskip

%
%

In this paper, we study the problem of designing a refractive index profile, $n(\x)$, subject to physically motivated constraints, for which there are very long-lived resonances. By the previous discussion, this corresponds to choosing $n(\x)$ so that there are scattering resonances very close to the real axis, \ie,  small width, $|\Im \omega|$. \medskip
 
 Roughly speaking, long-lived resonances can arise in the following ways:\medskip
 
\begin{enumerate}
\item[(A)] {\bf Total internal reflection:}\ Confinement of energy can be achieved by the mechanism of (nearly) total internal reflection. Consider a spherical region in 2 or 3 spatial dimensions on which $n(\x)>1$ is constant. If the ``angular momentum'' of the resonance mode is large, the mode will be strongly confined to the interface of the cavity. In the geometric optics approximation, the light rays have very shallow angle of incidence and therefore are nearly totally internally reflected. Such modes are referred to as  {\it whispering gallery} or {\it glancing} modes and are the basis for spherical resonators; see section \ref{sec:simple-example}.
\item[(B)] {\bf Interference effects:}\ The cavity can be surrounded by strongly reflective medium which is periodic of an appropriate period. In this case, wave interference effects provide the localizing mechanism. This is the basis for the Bragg resonator or Fabry-P\'erot cavity \cite{pc-book}.
\end{enumerate}
 
   \begin{figure}[t]
 \begin{center}
 \includegraphics[width=2.5in]{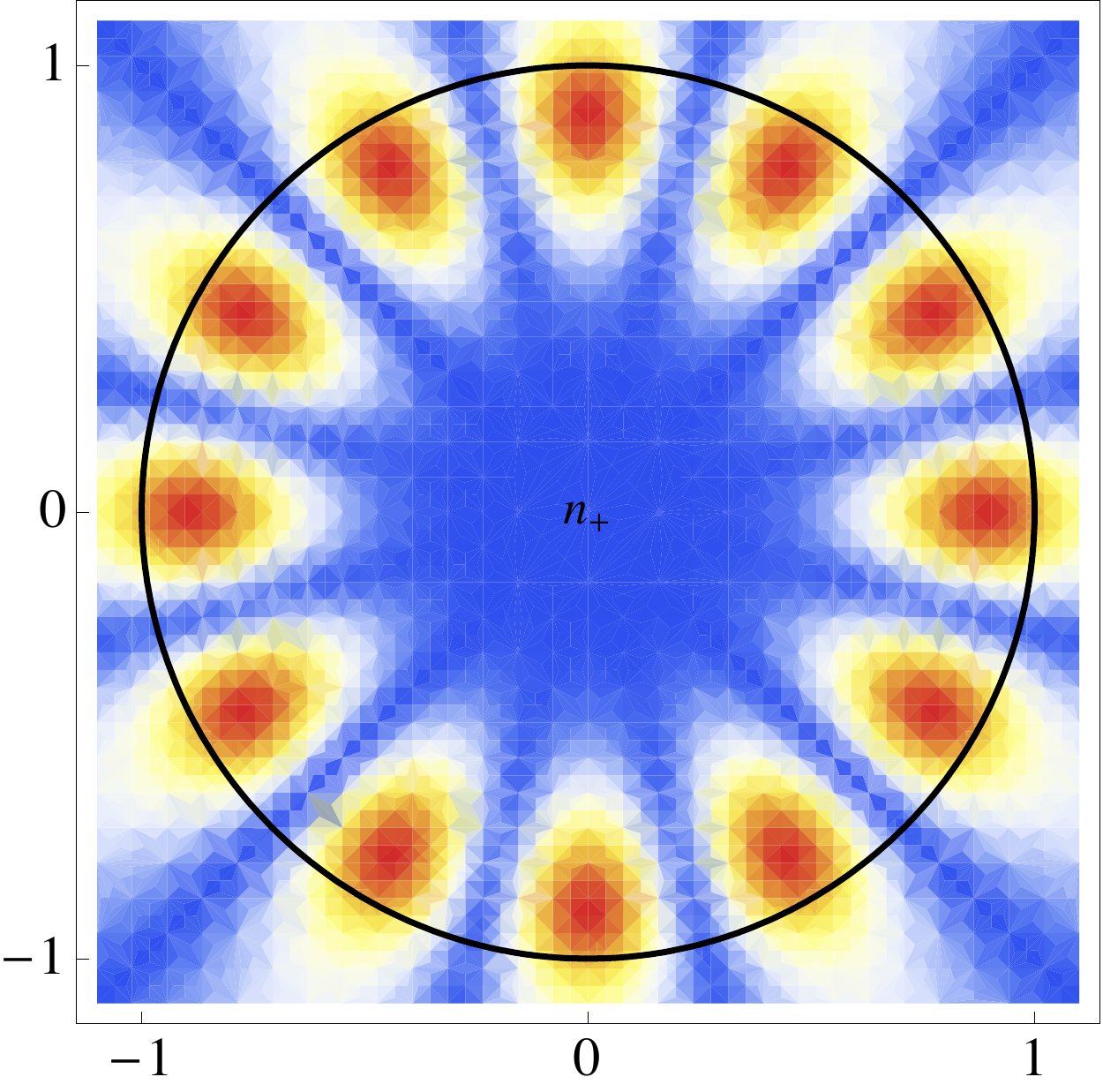}
 \includegraphics[width=2.5in]{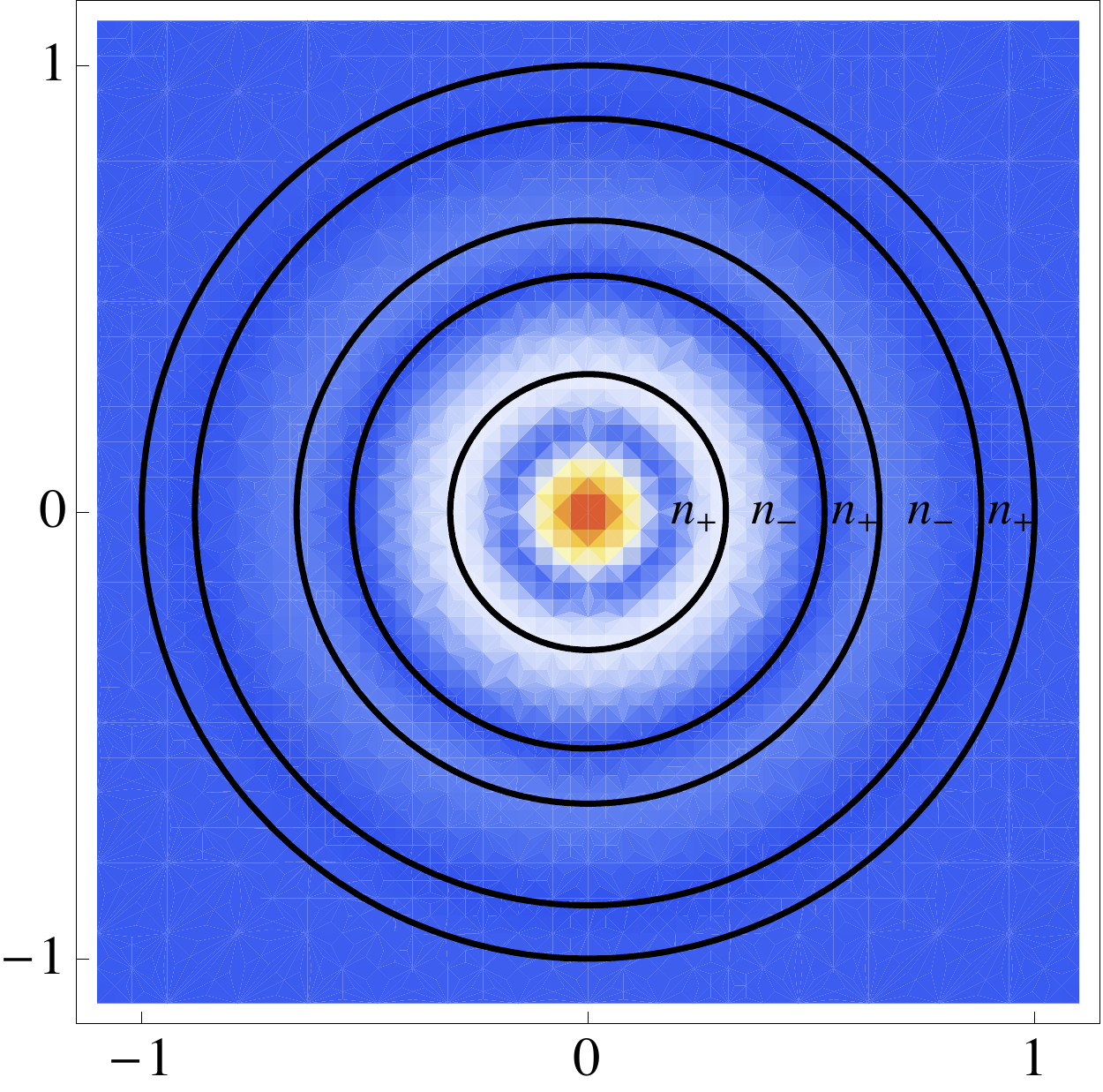}
 \caption{(left) The modulus of a mode with long lifetime ($\omega = 4.2 - .033 \imath$) due to (A) total internal reflection because of  large angular momentum ($\ell=6$) as in a spherical resonator. 
 (right)  Modulus of a mode with $\omega = 6.5 - .039 \imath$ confined by (B) interference effects, as in a Bragg resonator. }
 \label{fig:mech}
 \end{center}
 \end{figure}
\medskip

Figure \ref{fig:mech} illustrates the difference between mechanisms (A) and (B). 
   The left figure illustrates confinement via (nearly) total internal reflection;  the refractive index is a constant $n_+=2$
   inside the circular cavity, $\Omega = \{|x|<1\}$, and $n=1$ outside. Modes, $f_\ell(r)e^{\pm \imath \ell\theta}$,  $\ell=0,1,2,\ldots$  (in 2D) and $f_\ell(r)Y_\ell^m(\theta,\phi),\ \ |m|\le 2\ell +1,\ \ell=0,1,2,\ldots$ (in 3D) of increasing 
  ``angular momentum'', $\ell$,  have longer and longer lifetimes; the imaginary parts of the corresponding scattering resonances tend to zero.
The right figure displays radial confinement via interference effects; the refractive index profile consists of concentric annular regions  alternating between $n_-=1$ and $n_+=2$. 
   \medskip
   
\noindent{\bf Formulation of the Optimal Design Problem:}\ \  We seek
$n(\x)\in\calA$, a specified set of admissible  structures, having a resonance $\omega$ closest to the real axis ($\left|\Im\omega\right|\to\min$). 
Arbitrarily long confinement times may be achieved by either mechanism by allowing  for increasingly large $|\Re \omega|$. In mechanism (A), this corresponds to rays with ever shallower angle of incidence. In mechanism (B), there are more wavelengths for which the wave can destructively interfere with itself upon multiple reflections within the cavity. We therefore impose that $\left|\Re\omega\right|$ is no larger than a prescribed upper bound ($|\Re\omega|\le\rho < \infty$).

To obtain a precise formulation,  we first introduce admissible sets of structures. Let 
$\Omega\subset\mathbb{R}^d$ denote a fixed open and bounded set. Also, 
 let $0 < n_- < n_+< \infty$ be specified. Then,  our first  admissible set is given by:
 \begin{align}
\calA(\Omega,n_-,n_+) &:= \{ n \colon  {\rm supp}\ (n(\x)-1)\subset\overline{\Omega},\ \ \ 
 n_-\ \le\ n(\x)\ \le\ n_+ \} \label{A1-admiss}
\end{align}
In one spatial dimension, we shall take $\Omega=[0,L]$ and additionally consider the set of admissible structures which are symmetric:
\begin{align}
\calA_{sym}(L,n_-,n_+) &:= \{ n \in \calA ([0,L],n_-,n_+) \colon \ n(x)=n(L-x)  \}.\label{A3-admiss}
\end{align}
When the choice of $\Omega, n_-, n_+$ are unambiguous, we  simply write
 $\calA$ and $\calA_{sym}$.  
For $\rho>0$ define, for  $n(\x)\in\calA$, 
 $\Res_n^\rho\subset \Res_n$:
\begin{equation}
\Res_n^\rho := \{ \omega \in \Res_n \colon |\Re{\omega}| \leq \rho  \}.
\label{eqn:rhoRes}
\end{equation}
The minimal resonance width in the set $\Res_n^\rho$ is given by 
\begin{align} 
\Gamma^\rho[n] := \inf_{\omega \in \Res_n^\rho} |\Im \omega|. 
\label{eqn:obj}
\end{align}
If $\Res_n^\rho = \emptyset$, then we set $\Gamma^\rho[n] = \infty$.
  We  study the following \medskip
  
 \noindent {\bf Optimal Design Problem:}\  
\begin{equation}
\label{gen-min}
\Gamma_\star^\rho ({\cal A}) \ := \ \inf_{n \in \calA}  \ \Gamma^\rho[n]\ =\  
\inf_{n \in \calA}  \  \inf_{\omega \in \Res_n^\rho} |\Im \omega|.
\end{equation}
By the explicit example in section \ref{sec:simple-example}, $\Gamma_\star^\rho({\cal A})<\infty$, for the above choices of $\mathcal{A}$.

The corresponding lifetime of the optimal resonance mode is given by
$
\tau_\star({\cal A})  := \frac{1}{\Gamma_\star^{\rho}({\cal A})}.
$
An often used (dimensionless) quantity related to the lifetime is the \emph{quality factor} (Q-factor) of a scattering resonance, which is defined $Q := \frac{|\Re\omega|}{2|\Im \omega |}$. Several of the results of this paper have implications for the Q-factor since $Q < \frac{\rho}{2 \Gamma_\star^{\rho}({\cal A})}$.
 \subsection{Outline and summary of results}
 In section \ref{sec:scattering-resonances}, we derive variational-type identities for resonances (Proposition \ref{prop:epsilon}) in one spatial dimension. We use these identities  to show that for structures supported on a compact interval, $\Omega=[0,L]$, there is a general lower bound on the resonance width $|\Im\omega|$ of the following type:
 \begin{equation}
 \left| \Im\omega \right|\ \ge\ \alpha_\mathcal{A}\ e^{-\beta_\mathcal{A}\ |\Re\omega|^2}
 \label{form-of-bound}
 \end{equation}
 where $\alpha_{\cal A}$ and $\beta_{\cal A}>0$ depend on the  constraint set, $\mathcal{A}(\Omega,n_-,n_+)$; see Theorem~\ref{thm:aprioriBnd}. 
A consequence of \eqref{form-of-bound} is that for $L$ sufficiently large, the maximal lifetime resonance $\omega_\star$  satisfies $|\Re \omega_\star| > 0$; see Lemma \ref{cor:ReOmegaPositive}.    In Proposition \ref{prop:exclusionRect}, we also use a maximum principle argument to show that there exists a triangular resonance-free region in the lower 
half-plane. 

  In section \ref{sec:existence-of-optimizer}, we present results on the existence of a maximal lifetime resonance for dimensions $d=1, 2,\text{ and } 3$.
We prove, by studying the convergence of minimizing sequences, that there exists  a structure $n_\star(\x) \in \calA$ with a scattering resonance, $\omega_\star\in \Res_{n_\star}^\rho$, satisfying  \eqref{SRP}  with minimal width  $\left|\Im\omega_\star\right| = \Gamma_\star^\rho(\calA)$; see Theorem \ref{thm:exist}. 

 In  section \ref{sec:bangbang} we show, in dimensions $d=1, \ 2, \text{ and } 3$,   locally optimal structures $n_\star(\x) \in \calA$, are supported at the material bounds. That is, $n_\star(\x)$ is either $n_+$ or $n_-$ for almost every $\x\in\Omega$.

In sections \ref{sec:1dnum} and \ref{sec:1d-bragg}, we specialize to the case of one-dimensional structures where we can prove considerably more.  In section \ref{sec:1dnum}, we compute optimal structures using quasi-Newton optimization methods. These numerical observations motivate the investigations in Section \ref{sec:1d-bragg}, of the character of 1-d optimal refractive indices, $n_\star \in \calA_{sym}$. In particular, we prove that optimal structures for which the associated optimal resonance is simple, are step functions, \ie, have a finite number of transitions between $n_{+}$ and $n_{-}$; see Theorem \ref{prop:stepFun}. The key to the proof is Proposition \ref{prop:argu}, a monotonicity formula for the phase of a scattering resonance mode. We further prove a lower bound on the interval lengths on which $n=n_{+}$ and $n=n_{-}$; see Prop. \ref{prop:intLengthBnd}. 

In section \ref{sec:1d-bragg}, we show that the optimal structures are related to the well-known class of  Bragg structures, where $n(x)$ is constant on intervals whose length is one-quarter of the effective wavelength. In particular, we conjecture that the lengths of the intervals, $\delta_{\pm}$, for which $n(x) = n_{\pm}$ satisfies
$$
\delta_{\pm}(n_{+}, n_{-}, L) \rightarrow \frac{1}{4} \frac{2 \pi}{ |\Re \omega_{\star}| n_{\pm}} \equiv \frac{1}{4} \lambda_{\text{eff},\pm} 
\qquad \text{as $n_{+}\rightarrow \infty$ or $L \rightarrow \infty$};
$$ 
see Conjecture \ref{conj:Bragg}. $\lambda_{\text{eff},\pm}$ is called the effective wavelength. In App. \ref{sec:Bragg}, we discuss properties of infinite Bragg structures, and computationally demonstrate that they optimize the spectral gap to midgap ratio. 

\subsection{Related work}  \label{sec:relwork}

Results on the existence of optimal scattering resonances and general bounds on the imaginary parts of scattering  resonances for Schr\"odinger operators can be found in \cite{Harrell:1982uq,II:1986uq,Svirsky:1987fk}. Very recently, optimal designs have been considered in \cite{Karabash:2011uq}.  Our results for the Helmholtz equation make use of some of the arguments introduced in these papers.

There has been extensive investigation during the past several decades of ideal designs 
 for electromagnetic and photonic cavities, with a view toward 
device applications. 
Such design problems are typically formulated as an optimization problem for a particular figure of merit and solved using numerical optimization methods. 

The problem of maximizing the lifetime of a state trapped within a leaky cavity can be framed in several ways. 
The figure of merit can be taken to be the minimization of energy flux through the boundary \cite{Lipton:2003rq} or a measure of mode localization \cite{dobson-santosa,ABGKKLN-JP-2005}. In \cite{KaoSantosa}, the problem of minimizing $|\Im \omega|$ for a chosen resonance was investigated computationally in both one- and two-dimensions.  The 1-d problem considered here was also studied computationally in \cite{OptimizationOfScatteringResonances}. 
This work focuses on the optimization of $\sigma$  to minimize $|\Im \omega|$ which satisfies outgoing solutions of the equation
$
\partial_x \sigma \partial_x u(x,\omega) + \omega^2 n^{2} u(x,\omega) = 0.
$
In particular, the variations $\frac{\delta \omega}{\delta n}$ and $\frac{\delta \omega}{\delta \sigma}$ are formally computed. 
In \cite{Scheuer:2006fk}, transfer matrix methods were used to design low-loss 2D resonators with radial symmetry.  
In each of these papers, gradient-based optimization methods were used to solve the optimization problem.

Genetic algorithms have also been employed  to minimize energy flux through the boundary \cite{A.-Gondarenko:2006fk,Gondarenko:2008uq}. In \cite{Englund:05,Geremia2006,Felici2010} the ``inverse method''  is employed, where a desired mode shape is chosen and then the material properties which produce that mode are found algebraically. 
  In \cite{bauer2008}, the  time-dependent problem is solved to steady state using a finite-difference method with perfectly matched layers to approximate the outgoing boundary conditions. The design problem is solved using a  Nelder-Mead method. 

An important, related class of problems is to find photonic structures with large spectral band gaps. For the one-dimensional case, see the further discussion in Appendix \ref{sec:Bragg}. Structures with optimally large band gaps have been proven to exist \cite{Cox:1999qf,Osting2012} and numerical methods have been applied to finding them \cite{Cox:2000qc,Burger:2004fk,Kao:2005pt}. In \cite{Sigmund:2003jo}, topology optimization was used to find photonic crystals with optimally large bandgaps and also which optimally damp or guide waves. In \cite{Sigmund:2008wb}, properties of photonic crystals with optimally large bandgaps are investigated. 

One property of optimal structures for  \eqref{gen-min} is that they are piecewise constant structures which achieve the material bounds, \ie, they are {\it bang-bang} controls. 
This property is also realized in a number of optimization problems for eigenvalues of self-adjoint operators \cite{Krein:1955ye,Cox1990,Cox19902,Chanillo:2000,Osting2012} as well as for Schr\"odinger resonances \cite{II:1986uq}. 
In \cite{potDesign,OstingThesis} the authors consider the problem of maximizing the lifetime of a state coupled to radiation by an {\it ionizing} perturbation. For this class of problems, optimizers are interior points of the constraint set. 

\section*{Acknowledgements} 
 Some of the investigations of this article were motivated by discussions with P. Heider \cite{Heider-Weinstein-unpubl}.
 The authors wish to thank A. Barnett, D. Bindel, P. Heider, S.~G. Johnson, R.~V. Kohn,  G. Ponce,  J.~V. Ralston, F. Santosa, O. Savin, D. Tataru, and C.~W. Wong  for stimulating interactions. Finally, we thank the referees for their helpful comments.
B. Osting was supported in part by U.S. NSF Grant No. DMS-06-02235, EMSW21- RTG: Numerical Mathematics for Scientific Computing and NSF Postdoctoral Fellowship DMS-11-03959. M.~I. Weinstein was supported in part by  U.S. NSF Grants DMS-07-07850 and  DMS-10-08855.

\section{Scattering resonances: variational identities, and \\ bounds in one dimension}
\label{sec:scattering-resonances}
In this section we  derive  variational-type identities and use them to obtain universal inequalities for one-dimensional scattering resonances in terms of the support of $n(x)-1$ and  pointwise bounds on $n(x)$. 

\subsection{Variational identities}\label{sec:variational-identities} 
In dimension one, a scattering resonance $u(x,\omega)$ satisfying \eqref{SRP} on the domain $\Omega \equiv [0,L]$  is a weak solution of
\begin{subequations}
\label{1dSRP}
\begin{align}
\label{1dSRPa}
&\partial_x^2 u(x;\omega) + \omega^2 n^2(x) u(x;\omega)\ =\ 0,\ \ 0\le x\le L\\
\label{1dSRPb}
& \partial_x u(0;\omega) = -i\omega u(0;\omega),\ \  \partial_x u(L;\omega) = i\omega u(L;\omega). 
\end{align}
\end{subequations}
The following proposition gives a variational-type identity for the scattering resonance problem in dimension one.

\begin{prop} 
\label{prop:epsilon}
Let $n\in\calA$, as defined in \eqref{A1-admiss} be a refractive index and let $\left(\omega,u(\x,\omega)\right)$ be a one dimensional scattering resonance pair on the domain $\Omega \equiv [0,L]$, {\it i.e.} a weak solution of \eqref{1dSRP}. Then
\begin{subequations}
\label{eqn:epsilon}
\begin{align}
\Re(\omega^2) &=  \frac{ \int_0^L  \left|u'(\cdot,\omega)\right|^2  + \Im \omega \left( \left|u(0,\omega)\right|^2 + \left|u(L,\omega)\right|^2 \right)}{ \int_0^L n^2 \left|u(\cdot,\omega)\right|^2 } \\
\Im(\omega^2) &= - \frac{\Re \omega \left( \left|u(0,\omega)\right|^2 + \left|u(L,\omega\right)|^2 \right)}{ \int_0^L n^2 \left|u(\cdot,\omega)\right|^2 }. \label{Imomega2}
\end{align}
\end{subequations}
Furthermore,  \eqref{Imomega2} and $\Im\omega<0$ imply:
\begin{equation}
\label{eqn:tau}
|\Im \omega|  =  \frac{  |u(0,\omega)|^2 + |u(L,\omega)|^2 } {2  \int_0^L n^2 |u(\cdot,\omega)|^2 } .
\end{equation}
\end{prop}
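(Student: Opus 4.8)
The plan is to extract all three identities from a single Green's (integration-by-parts) identity for the ODE \eqref{1dSRPa}, using $\overline{u}$ as test function. As a preliminary I would note that, for bounded $n$, a nontrivial weak solution of \eqref{1dSRP} lies in $H^2_{\rm loc}$ and hence in $C^1[0,L]$, so the boundary values $u(0),\,u(L)$ and the integral $\int_0^L|u'|^2\,\ud x$ are meaningful and classical integration by parts applies; moreover $\int_0^L n^2|u|^2\,\ud x \ge n_-^2\int_0^L|u|^2\,\ud x > 0$, since $u\equiv 0$ on $[0,L]$ would force $u\equiv 0$ on all of $\mathbb R$ through \eqref{SRP}.

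Next I would multiply \eqref{1dSRPa} by $\overline{u}$ and integrate over $[0,L]$. One integration by parts gives $\int_0^L u''\,\overline{u}\,\ud x = \bigl[u'\overline{u}\bigr]_0^L - \int_0^L|u'|^2\,\ud x$, while \eqref{1dSRPa} itself gives $\int_0^L u''\,\overline{u}\,\ud x = -\omega^2\int_0^L n^2|u|^2\,\ud x$. Substituting the outgoing boundary conditions \eqref{1dSRPb}, i.e.\ $u'(L)\overline{u}(L)=i\omega|u(L)|^2$ and $u'(0)\overline{u}(0)=-i\omega|u(0)|^2$, so that $\bigl[u'\overline{u}\bigr]_0^L = i\omega\bigl(|u(0)|^2+|u(L)|^2\bigr)$, and equating the two expressions, I would obtain the master identity
\begin{equation*}
\omega^2\int_0^L n^2|u|^2\,\ud x \;=\; \int_0^L|u'|^2\,\ud x \;-\; i\,\omega\bigl(|u(0)|^2+|u(L)|^2\bigr).
\end{equation*}
Writing $-i\omega=\Im\omega - i\,\Re\omega$ and dividing by the positive number $\int_0^L n^2|u|^2\,\ud x$, the real and imaginary parts of this complex equation are exactly the two identities in \eqref{eqn:epsilon}.

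To reach \eqref{eqn:tau} I would combine \eqref{Imomega2} with the elementary identity $\Im(\omega^2)=2\,\Re\omega\,\Im\omega$, which yields
\begin{equation*}
2\,\Re\omega\,\Im\omega\int_0^L n^2|u|^2\,\ud x \;=\; -\,\Re\omega\bigl(|u(0)|^2+|u(L)|^2\bigr),
\end{equation*}
and then cancel $\Re\omega$ and use $\Im\omega<0$ to pass to absolute values. The one step meriting a comment --- and essentially the only (minor) obstacle --- is this cancellation, which presumes $\Re\omega\neq 0$. I would either observe that \eqref{eqn:tau} is invoked only in that regime, or dispose of the purely imaginary case directly from the master identity: viewing it as a quadratic $A\omega^2 + iC\omega - B = 0$ with $A=\int_0^L n^2|u|^2\,\ud x>0$, $B=\int_0^L|u'|^2\,\ud x\ge 0$, $C=|u(0)|^2+|u(L)|^2\ge 0$, a non-real root occurs only when $4AB\ge C^2$, in which case both roots share the imaginary part $-C/(2A)$, again giving \eqref{eqn:tau}. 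Apart from this bookkeeping the proof is a routine computation; the content is entirely in the choice of test function $\overline{u}$ together with the form of the radiation conditions \eqref{1dSRPb}.
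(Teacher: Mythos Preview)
Your proof is correct and follows essentially the same path as the paper's: multiply \eqref{1dSRPa} by $\overline{u}$, integrate by parts, insert the radiation conditions \eqref{1dSRPb}, and separate real and imaginary parts; then deduce \eqref{eqn:tau} from $\Im(\omega^2)=2\,\Re\omega\,\Im\omega$. One small caveat: your side remark disposing of the case $\Re\omega=0$ via the quadratic $A\omega^2+iC\omega-B=0$ is not quite right---when $4AB<C^2$ the two roots are purely imaginary but do \emph{not} both have imaginary part $-C/(2A)$, so ``a non-real root occurs only when $4AB\ge C^2$'' is false as stated---but your first alternative (that \eqref{eqn:tau} is only invoked in the regime $\Re\omega\ne 0$) is exactly how the paper proceeds, so this does not affect the argument.
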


\begin{proof}
Multiply  \eqref{1dSRPa} by $\overline{u(\x,\omega)}$ and integrate over $\Omega$ to obtain
\begin{align*}
\left(\Re(\omega^2) +  \imath \Im(\omega^2) \right) \int_\Omega n^2 |u|^2 \ud \x 
= \int_\Omega | u_{x}|^2 \ud \x - \imath \omega \left( | u(0,\omega) |^{2} + |u(L,\omega)|^{2} \right).
\end{align*}
Identifying real and imaginary parts yields 
Eq. \eqref{eqn:epsilon}. 
Equation \eqref{eqn:tau} follows from Eq. (\ref{eqn:epsilon}b) 
 and the relationship $| \Im(\omega^2)| = 2 |\Re \omega| |\Im \omega|$. 
\end{proof}

\subsection{Lower bounds for resonances of the one-dimensional Helmholtz equation}
\label{sec:1d-res-bound}
We use the variational-type identities  from Sec.  \ref{sec:variational-identities} to show the following universal inequality for one-dimensional scattering resonances. 
\medskip

\begin{thm} 
\label{thm:aprioriBnd}
Let $\Omega = [0,L] \subset \mathbb R^1$ and $n  \in \calA(\Omega,n_-,n_+)$. 
For any scattering resonance $\omega \in \Res_n$  and  $\xi>0$, 
\begin{align*}
|\Im \omega| \geq  \min \left[\,  \xi, \,\, 
\frac{ 3 \exp \left(-  \left( |\Re \omega|^2 + \xi^2 \right) n_+^2 L^2\right) }
{n_+^2L \left(3 + L^2 \left( |\Re \omega|^2 + \xi^2 \right) \right)}
\,  \right].
\end{align*}
\begin{equation}
\label{eq:aprioriBndw}
\textrm{In particular}\ n_+ > e^{-1}\ \implies\ |\Im \omega| \geq \frac{3 \exp \left( - n_+^2 L^2 |\Re \omega|^2 \right)}
{e L \left(1 + 3 n_+^2 + n_+^2 L^2 |\Re \omega|^2 \right) }. 
\end{equation}
\end{thm}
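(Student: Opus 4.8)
The plan is to start from the variational identity \eqref{eqn:tau} of Proposition~\ref{prop:epsilon}, which expresses $|\Im\omega|$ as the ratio of boundary data $|u(0,\omega)|^2+|u(L,\omega)|^2$ to $2\int_0^L n^2|u|^2$. Since we only need a lower bound on $|\Im\omega|$, it suffices to argue by contradiction: suppose $|\Im\omega|<\xi$; then I want to bound the numerator from below and the denominator from above in a way that forces $|\Im\omega|$ to exceed the stated quantity. Normalize the mode so that $|u(0,\omega)|^2+|u(L,\omega)|^2=1$; the goal then becomes an \emph{upper} bound on $\int_0^L n^2|u|^2 \le n_+^2\int_0^L|u|^2$, i.e. a bound of the form $\int_0^L|u|^2\le C$ where $C$ depends only on $L$, $n_+$, and $|\Re\omega|^2+\xi^2$.

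The key step is to control the growth of $|u|$ across $[0,L]$ using only the ODE \eqref{1dSRPa} and the Robin boundary conditions \eqref{1dSRPb}. First I would derive a Gr\"onwall-type estimate: rewrite \eqref{1dSRPa} in first-order form for the vector $(u,u')$, use $|u'(0)|=|\omega||u(0)|$ from \eqref{1dSRPb}, and integrate to get $|u(x)|^2+|u'(x)|^2 \le (|u(0)|^2+|u'(0)|^2)\exp\big(\text{const}\cdot(1+|\omega|^2 n_+^2)x\big)$, or a sharper version tuned to produce exactly the exponent $(|\Re\omega|^2+\xi^2)n_+^2L^2$ appearing in the statement. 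Here one uses $|\omega|^2=|\Re\omega|^2+|\Im\omega|^2 \le |\Re\omega|^2+\xi^2$ under the contradiction hypothesis. Integrating the pointwise bound over $[0,L]$ and tracking the algebraic prefactor $L(3+L^2(|\Re\omega|^2+\xi^2))$ — which presumably comes from integrating $\exp(cx)$ and bounding $\int_0^L e^{cx}\,dx \le L e^{cL}$ together with a polynomial factor from the energy-type quantity — yields the denominator estimate, and hence $|\Im\omega|\ge 3\exp(-(|\Re\omega|^2+\xi^2)n_+^2L^2)/(n_+^2L(3+L^2(|\Re\omega|^2+\xi^2)))$. Combined with the case $|\Im\omega|\ge\xi$ this gives the $\min$ in the first display.

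For the ``in particular'' statement I would simply optimize over the free parameter $\xi>0$, or rather choose it to balance the two branches of the $\min$. The natural choice is to pick $\xi^2$ comparable to $1/(n_+^2L^2)$ so that $\exp(-\xi^2 n_+^2L^2)$ is bounded below by a constant like $e^{-1}$; taking $\xi = 1/(n_+ L)$ makes $\exp(-(|\Re\omega|^2+\xi^2)n_+^2L^2) = e^{-1}\exp(-n_+^2L^2|\Re\omega|^2)$, and then $\xi^2 = 1/(n_+^2L^2)$ turns the polynomial factor $3+L^2(|\Re\omega|^2+\xi^2)$ into $3+L^2|\Re\omega|^2+1/n_+^2 = (1+3n_+^2+n_+^2L^2|\Re\omega|^2)/n_+^2$, cancelling one power of $n_+^2$ against the $n_+^2L$ in the denominator to leave $eL$. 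One must then check that with this choice the second branch of the $\min$ is the smaller one — this is where the hypothesis $n_+>e^{-1}$ enters, ensuring $\xi=1/(n_+L)$ is large enough that $\xi$ is not the binding constraint, i.e. that $\xi \ge 3e^{-1}\exp(-n_+^2L^2|\Re\omega|^2)/(eL(1+3n_+^2+n_+^2L^2|\Re\omega|^2))$ holds.

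The main obstacle I anticipate is getting the Gr\"onwall estimate sharp enough to produce \emph{exactly} the constants in the statement — in particular the clean exponent $(|\Re\omega|^2+\xi^2)n_+^2L^2$ with no extraneous factor of $2$ or additive $L$-term, and the precise polynomial $3+L^2(|\Re\omega|^2+\xi^2)$. This likely requires choosing the right ``energy'' functional to differentiate (e.g. $|u(x)|^2 + |\Re\omega|^{-2}$-weighted $|u'(x)|^2$, or something adapted to the structure of \eqref{1dSRPa}) rather than the crude $|u|^2+|u'|^2$, and carefully using the boundary condition to fix the initial value of that functional. The ODE/Gr\"onwall mechanics themselves are routine; the bookkeeping to land on the stated closed form is the delicate part.
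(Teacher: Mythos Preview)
Your overall architecture matches the paper's: start from the identity \eqref{eqn:tau}, bound $\int_0^L n^2|u|^2$ from above via a Gr\"onwall-type growth estimate on $u$, obtain a nonlinear inequality of the form $|\Im\omega|\ge f(|\Im\omega|)$ with $f$ decreasing, and conclude $|\Im\omega|\ge\min[\xi,f(\xi)]$. Your handling of the ``in particular'' clause is exactly what the paper does: take $\xi=1/(n_+L)$, so $\exp(-\xi^2n_+^2L^2)=e^{-1}$ and $3+L^2\xi^2=(1+3n_+^2)/n_+^2$, and use $n_+>e^{-1}$ to check that the second branch of the $\min$ is the binding one.

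The one substantive difference is the mechanism for the pointwise bound, and here your instinct that the first-order energy $|u|^2+|u'|^2$ will not give the clean constants is correct. Differentiating that quantity produces a $(1+|\omega|^2n_+^2)$ coefficient, leaving an unwanted additive $L$ in the exponent. The paper avoids this by integrating \eqref{1dSRPa} twice from $x=0$ (with the normalization $u(0)=1$, $u'(0)=-i\omega$) to obtain the Volterra form
\[
u(x)=1-i\omega x-\omega^2\int_0^x(x-y)\,n^2(y)\,u(y)\,dy,
\]
so that $|u(x)|\le\sqrt{1+|\omega|^2x^2}+|\omega|^2\int_0^x(x-y)n^2(y)|u(y)|\,dy$, and Gr\"onwall yields
\[
|u(x)|\le\sqrt{1+|\omega|^2x^2}\,\exp\!\Big(|\omega|^2\int_0^x(x-y)n^2(y)\,dy\Big).
\]
Squaring, bounding the exponential by its value at $x=L$ (namely $\exp(|\omega|^2n_+^2L^2)$), and integrating the polynomial prefactor $\int_0^L(1+|\omega|^2x^2)\,dx=L(1+|\omega|^2L^2/3)$ is exactly what produces the factor $(3+L^2|\omega|^2)/3$ and the exponent $|\omega|^2n_+^2L^2$ with no stray constants. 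This is the ``right energy functional'' you were looking for; once you have it the rest of your plan goes through verbatim. (A minor stylistic point: the paper does not assume $|\Im\omega|<\xi$ by contradiction but derives $|\Im\omega|\ge f(|\Im\omega|)$ directly and then invokes monotonicity of $f$; the two are logically equivalent.)
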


\begin{rem} 
\label{cor:ReOmegaPositive}
In one-dimension, if $\rho$ is sufficiently large,  the optimal resonance, $\omega_\star$, satisfies  $|\Re \omega_\star| > 0$. This observation follows from the construction of explicit examples where $|\Im \omega| < \frac{3}{eL( 1 + 3n_{+}^{2})}$; this includes simple piecewise constant structures  \cite{Heider-Weinstein-unpubl} or  numerically constructed refractive indices (see Section \ref{sec:1dnum}).
\end{rem}

\medskip

We follow the strategy of \cite{Harrell:1982uq} to prove Theorem \ref{thm:aprioriBnd}. Theorem  \ref{thm:aprioriBnd} relies on the following two lemmata, which we shall prove first. 
In this section only, we normalize the resonance state by assuming, without loss of generality,  
\begin{equation}
\label{eqn:normAssum}
1 = u(0) \leq |u(L)|^2
\end{equation}
(otherwise we make the substitution $x \mapsto L-x$).  

\begin{lem} 
\label{lem:pntwisebndu}
Let $u(x;\omega), \omega$ denote a scattering resonance pair for the one-dimensional scattering resonance problem, \eqref{1dSRP}, defined for  $0\leq x \leq L$. Then,  we have the pointwise bound
\begin{align} 
\label{eqn:pntwisebndu}
|u(x;\omega) | \leq \sqrt{1 + |\omega|^2 x^2} \exp \left( |\omega|^2 \int_0^x (x-y) n^2(y) \ud y \right) . 
\end{align}
\end{lem}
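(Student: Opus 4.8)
The plan is to convert the second-order ODE \eqref{1dSRPa} together with the left boundary condition in \eqref{1dSRPb} into an integral (Volterra) equation for $u$, and then apply a Gronwall-type estimate. First I would integrate \eqref{1dSRPa} twice from $0$ to $x$: writing $u'' = -\omega^2 n^2 u$, one integration gives $u'(x) = u'(0) - \omega^2\int_0^x n^2(y)u(y)\,\mathrm{d}y$, and using $u'(0) = -i\omega u(0) = -i\omega$ (from \eqref{1dSRPb} and the normalization $u(0)=1$), a second integration yields
\begin{equation*}
u(x) = 1 - i\omega x - \omega^2 \int_0^x (x-y)\, n^2(y)\, u(y)\,\mathrm{d}y,
\end{equation*}
after swapping the order of integration in the double integral to produce the kernel $(x-y)$. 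This is the representation that makes the factor $\int_0^x(x-y)n^2(y)\,\mathrm{d}y$ in the claimed bound appear naturally.

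Next I would take absolute values and use $|1 - i\omega x| \le \sqrt{1 + |\omega|^2 x^2}$ (which is in fact an equality) and $|\omega^2| = |\omega|^2$ to get
\begin{equation*}
|u(x)| \le \sqrt{1 + |\omega|^2 x^2} + |\omega|^2 \int_0^x (x-y)\, n^2(y)\, |u(y)|\,\mathrm{d}y.
\end{equation*}
The right-hand side is not quite in standard Gronwall form because of the $(x-y)$ weight and the $x$-dependence inside the inhomogeneous term $\sqrt{1+|\omega|^2x^2}$. The clean way around this is to define the auxiliary quantity $\phi(x) := |u(x)| / \sqrt{1 + |\omega|^2 x^2}$ and note that $\sqrt{1+|\omega|^2 x^2}$ is nondecreasing in $x$, so for $y \le x$ we have $\sqrt{1+|\omega|^2 y^2} \le \sqrt{1+|\omega|^2 x^2}$; dividing the inequality through by $\sqrt{1+|\omega|^2 x^2}$ and bounding $(x-y) \le$ (the kernel is already fine) gives
\begin{equation*}
\phi(x) \le 1 + |\omega|^2 \int_0^x (x-y)\, n^2(y)\, \phi(y)\,\mathrm{d}y.
\end{equation*}
Then the standard integral inequality argument (or a direct comparison with the solution of the corresponding linear second-order ODE $\psi'' = |\omega|^2 n^2 \psi$, $\psi(0)=1$, $\psi'(0)=0$, whose solution dominates and is itself bounded by $\exp(|\omega|^2\int_0^x (x-y)n^2(y)\,\mathrm{d}y)$) yields $\phi(x) \le \exp\!\left(|\omega|^2 \int_0^x (x-y) n^2(y)\,\mathrm{d}y\right)$, which is exactly \eqref{eqn:pntwisebndu}.

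The main obstacle I anticipate is handling the $(x-y)$ weight in the Gronwall step cleanly: the na\"ive bound $(x-y) \le L$ or $\le x$ would give an exponent with an extra factor and not match the stated $\int_0^x(x-y)n^2(y)\,\mathrm{d}y$. The right fix is to recognize that the iterated Volterra kernel built from $(x-y)n^2(y)$ telescopes: the $k$-th Picard iterate of the map $v \mapsto |\omega|^2\int_0^x(x-y)n^2(y)v(y)\,\mathrm{d}y$ applied to the constant $1$ is bounded by $\tfrac{1}{k!}\left(|\omega|^2\int_0^x(x-y)n^2(y)\,\mathrm{d}y\right)^k$ — this can be checked by induction using monotonicity of $\int_0^{\cdot}(x-y)n^2(y)\,\mathrm{d}y$ in the upper limit — and summing over $k$ gives the exponential. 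So the cleanest writeup either invokes the comparison ODE $\psi'' = |\omega|^2 n^2\psi$ directly (then one only needs the elementary fact that its solution with data $\psi(0)=1,\psi'(0)=0$ satisfies $\psi(x) = 1 + |\omega|^2\int_0^x(x-y)n^2(y)\psi(y)\,\mathrm{d}y \le e^{|\omega|^2\int_0^x(x-y)n^2(y)\,\mathrm{d}y}$), or spells out the Picard/Neumann series bound. A minor point to be careful about: the normalization \eqref{eqn:normAssum} is only $u(0)=1$, so $u'(0) = -i\omega$ is exact and no error term is incurred there.
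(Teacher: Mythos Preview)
Your proposal is correct and follows essentially the same approach as the paper: derive the Volterra integral equation by integrating \eqref{1dSRPa} twice from $0$ using the left boundary datum, bound the free term by $\sqrt{1+|\omega|^2x^2}$, and then apply a Gronwall-type estimate; the paper simply invokes ``Gronwall's inequality'' at the final step without introducing your normalized $\phi$ or the Picard/comparison-ODE discussion, so your write-up is just a more detailed version of the same argument. One small slip: your parenthetical that $|1 - i\omega x| \le \sqrt{1+|\omega|^2 x^2}$ is ``in fact an equality'' is incorrect for complex $\omega$---since $\Im\omega<0$ one has $|1 - i\omega x|^2 = 1 + |\omega|^2 x^2 + 2(\Im\omega)x < 1+|\omega|^2x^2$ for $x>0$---but the inequality goes the right way, so the proof is unaffected.
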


\begin{proof}
Assuming \eqref{eqn:normAssum}, the boundary condition \eqref{1dSRPb} is written $u(0) = 1,\  u_x(0) = -i \omega u(0)$ and $ u_x(L) = i \omega u(L)$.\  Integrating twice, we obtain the integral equation
$$
u(x) = 1 - i \omega x - \omega^2 \int_0^x  \int_0^y n^2(z) u(z) \ud z \ud y. 
$$
Integrating the outer integral by parts, we obtain 
$
\left| \int_0^x \int_0^y n^2(z) u(z) \ud z \ud y \right| \leq  \int_0^x (x-y) n^2(y) |u(y)| \ud y
$
and thus 
$
|u(x)| \leq \sqrt{1 + |\omega|^2 x^2 } + |\omega|^2 \int_0^x (x-y) n^2(y) |u(y)| \ud y. 
$
Equation (\ref{eqn:pntwisebndu}) now follows from Gronwall's inequality. 
\end{proof}

\begin{lem}
\label{lem:intn2u2} Assuming the same hypotheses as in  Lemma \ref{lem:pntwisebndu}, 
\begin{align}
\label{eqn:intn2u2}
\int_0^L n^2 |u|^2 \ud x \leq 
n_+^2 L \exp \left(|\omega|^2 n_+^2 L^2\right) \left(1 + |\omega|^2 L^2/3\right)
\end{align}
where $n_+ = \max_{x\in(0,L)} n(x)$. 
\end{lem}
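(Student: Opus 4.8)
The plan is to bound the integral $\int_0^L n^2|u|^2\ud x$ by combining the pointwise bound from Lemma~\ref{lem:pntwisebndu} with the crude estimate $\int_0^x (x-y) n^2(y)\ud y \le n_+^2 x^2/2 \le n_+^2 L^2/2$. First I would substitute the pointwise bound into the integrand: since $n^2(x) \le n_+^2$ and $|u(x)|^2 \le (1+|\omega|^2 x^2)\exp\!\left(2|\omega|^2 \int_0^x(x-y)n^2(y)\ud y\right)$, we get
\begin{align*}
\int_0^L n^2|u|^2 \ud x \le n_+^2 \int_0^L (1+|\omega|^2 x^2)\exp\!\left(2|\omega|^2\!\int_0^x (x-y)n^2(y)\ud y\right)\ud x.
\end{align*}
Then I would pull the exponential factor out using the uniform bound $\int_0^x (x-y)n^2(y)\ud y \le \tfrac12 n_+^2 x^2 \le \tfrac12 n_+^2 L^2$, giving a factor $\exp(|\omega|^2 n_+^2 L^2)$ in front. (One must be slightly careful about the factor of $2$: the exponent in Lemma~\ref{lem:pntwisebndu} is without a $2$, and squaring doubles it, but then the uniform bound $\int_0^x(x-y)n^2\le \tfrac12 n_+^2 L^2$ absorbs the $2$; this bookkeeping is the one place to watch.)

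What remains is the elementary integral $\int_0^L (1+|\omega|^2 x^2)\ud x = L + |\omega|^2 L^3/3 = L(1+|\omega|^2 L^2/3)$. Multiplying the three pieces together yields exactly
\begin{align*}
\int_0^L n^2|u|^2 \ud x \le n_+^2 L\,\exp\!\left(|\omega|^2 n_+^2 L^2\right)\left(1+|\omega|^2 L^2/3\right),
\end{align*}
which is the claimed inequality \eqref{eqn:intn2u2}.

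I do not expect any serious obstacle here; the proof is a direct chain of estimates once Lemma~\ref{lem:pntwisebndu} is in hand. The only subtlety, as noted, is tracking the constant in the exponent — making sure the square of the pointwise bound's exponential is controlled by $\exp(|\omega|^2 n_+^2 L^2)$ rather than $\exp(2|\omega|^2 n_+^2 L^2)$ — which works precisely because the double integral $\int_0^x(x-y)n^2(y)\ud y$ is bounded by $\tfrac12 n_+^2 L^2$, not $n_+^2 L^2$. Everything else is monotonicity of the exponential and evaluation of a polynomial integral.
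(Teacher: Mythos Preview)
Your proposal is correct and follows essentially the same approach as the paper's proof: square the pointwise bound from Lemma~\ref{lem:pntwisebndu}, replace $n^2$ by $n_+^2$, bound the exponential uniformly using $\int_0^x(x-y)n^2(y)\,\mathrm{d}y \le \tfrac12 n_+^2 L^2$, and evaluate the remaining polynomial integral. The only cosmetic difference is that the paper first bounds $\int_0^x(x-y)n^2(y)\,\mathrm{d}y \le \int_0^L(L-y)n^2(y)\,\mathrm{d}y$ and then applies $n^2\le n_+^2$, whereas you bound $n^2$ first and then the limits of integration; both routes give the same constant in the exponent, and your bookkeeping of the factor of $2$ is exactly right.
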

\begin{proof}
Using Lemma \ref{lem:pntwisebndu}, we compute
\begin{align*}
\int_0^L n^2 |u|^2 \ud x &\leq \int_0^L n^2(x) (1+  |\omega|^2 x^2) \exp\left(2 |\omega|^2  \int_0^x (x-y)n^2(y) \ud y\right) \ud x\\
&\leq n_+^2 \exp \left(2 |\omega|^2 \int_0^L (L-y)n^2(y) \ud y \right)  \int_0^L 1+ |\omega|^2 x^2  \ud x\\
&\leq n_+^2 \exp \left(|\omega|^2 n_+^2 L^2\right) \left(L + |\omega|^2 L^3/3\right)
\end{align*}
as desired.
\end{proof}

\begin{proof}[Proof of Theorem \ref{thm:aprioriBnd}.]
Using (\ref{eqn:tau}), (\ref{eqn:normAssum}), and Lemma \ref{lem:intn2u2}, we compute
\begin{subequations}
\begin{align*}
|\Im \omega|
&=   \frac{ |u(0)|^2 + |u(L)|^2 }{2 \int_0^L n^2 |u|^2 \ud x }\ 
\geq \frac{ \exp \left( - |\omega|^2  n_+^2 L^2\right)}{n_+^2L \left(1 + |\omega|^2  L^2/3\right)}\\
&=  \frac{ \exp \left(- \left( |\Re \omega|^2 + |\Im \omega|^{2}\right)  n_+^2 L^2\right)}{n_+^2 L \left(1 + \left( |\Re \omega|^2 +  |\Im \omega|^{2}\right)  L^2/3\right)}\ 
 \equiv f(|\Im \omega|).
\end{align*}
\end{subequations}
This is a nonlinear inequality for $|\Im \omega|$. Note that  $f(x)$ is a monotonically decreasing function for $x\geq0$ with $f\downarrow 0$ as $x\uparrow \infty$. 
Thus, for $\xi \geq |\Im \omega|$, $f(\xi) \leq f( |\Im \omega|) \leq |\Im \omega |$. Thus for all $\xi > 0$, 
$|\Im \omega| \geq \min [ \xi, f( \, \xi \, ) ]$.
To obtain the optimal bound, one would choose $\xi = \xi_0$ such that $\xi_0 = f( \, \xi_0 \, )$.  
For simplicity, we choose $\xi_0 =  (n_+ L)^{-1}$. 
If $n_+ > e^{-1}$, we find that $\min[\xi_0, f(\xi_0)] = f(\xi_0)$ for all $\Re \omega$ and \eqref{eq:aprioriBndw} follows. 
\end{proof}

\begin{rem}
Theorem \ref{thm:aprioriBnd} also gives an upper bound for the quality factor, defined $Q := \frac{|\Re \omega| }{2 |\Im \omega|}$. In particular this bound shows that $Q\downarrow 0$ as $\Re \omega \downarrow 0$. 
\end{rem}

The following proposition shows that there is a triangular resonance-free region in the lower-half complex plane. 

\begin{prop}
\label{prop:exclusionRect}
Let $n \in \calA_{sym}$, \ie, $n_-<n(x)<n_+$ and $n(L-x)=L(x)$. If $\omega\in \text{Res}_n$ is a one-dimensional Helmholtz resonance satisfying \eqref{1dSRP} with $d=1$, then
$\omega \notin \{\omega\colon |\Im\omega| > |\Re \omega| \text{ and } |\Im \omega| \leq \frac{1}{n_+^2 L} \}$.
\end{prop}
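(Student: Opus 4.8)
The plan is to exploit the symmetry $n(x)=n(L-x)$ to reduce the scattering resonance problem on $[0,L]$ to a problem on the half-interval $[0,L/2]$ with a boundary condition at the midpoint, and then to run a Rellich/Pohozaev-type (or simple multiplication-and-integration) argument that produces a sign obstruction when $|\Im\omega|>|\Re\omega|$. First I would note that for symmetric $n$, any resonance mode $u$ can be decomposed into even and odd parts about $x=L/2$; since the Helmholtz equation \eqref{1dSRPa} has symmetric coefficients and the outgoing boundary conditions \eqref{1dSRPb} are compatible with the reflection $x\mapsto L-x$, each parity component that is nontrivial is itself a resonance mode. So without loss of generality $u$ is either even or odd about $L/2$, which means $u'(L/2)=0$ (even case) or $u(L/2)=0$ (odd case). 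This converts the problem on $[0,L]$ into a problem on $[0,L/2]$ with a Dirichlet or Neumann condition at the right endpoint and the outgoing condition $u'(0)=-i\omega u(0)$ at the left endpoint.

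Next I would run the same multiply-by-$\overline{u}$-and-integrate computation as in Proposition~\ref{prop:epsilon}, but now on $[0,L/2]$. Because the midpoint condition kills the boundary term there, one gets
\begin{equation*}
\omega^2 \int_0^{L/2} n^2|u|^2 \,= \int_0^{L/2}|u'|^2 \;-\; i\omega\,|u(0)|^2 .
\end{equation*}
Taking imaginary and real parts gives $\Im(\omega^2)\int n^2|u|^2 = -\Re\omega\,|u(0)|^2$ and $\Re(\omega^2)\int n^2|u|^2 = \int_0^{L/2}|u'|^2 + \Im\omega\,|u(0)|^2$. Since $\Im\omega<0$ and $\Re(\omega^2)=|\Re\omega|^2-|\Im\omega|^2$, the hypothesis $|\Im\omega|>|\Re\omega|$ forces $\Re(\omega^2)<0$, hence
\begin{equation*}
\int_0^{L/2}|u'|^2 \;<\; |\Im\omega|\,|u(0)|^2 .
\end{equation*}
Combined with the first (tau-type) identity, $|\Im\omega| = \tfrac{|u(0)|^2}{2\int_0^{L/2} n^2|u|^2}$, I would then bound $\int_0^{L/2} n^2|u|^2$ from below in terms of $|u(0)|^2$ and $\int_0^{L/2}|u'|^2$: writing $u(x) = u(0) + \int_0^x u'$, a Cauchy–Schwarz / Poincaré-type estimate gives $\int_0^{L/2} n^2|u|^2 \le n_+^2\big(\tfrac{L}{2}|u(0)| + \text{(something with }\int|u'|^2)\big)^2$ — but the cleaner route is to use that the midpoint condition plus outgoing condition, together with $\Re(\omega^2)<0$, makes $u$ essentially convex/monotone in modulus, so $|u(x)|\ge |u(0)|$ on $[0,L/2]$ and therefore $\int_0^{L/2} n^2|u|^2 \ge n_+^2\cdot\text{(lower bound)}$...

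Actually the sharp inequality I expect to need is the reverse: I want an upper bound $\int_0^{L/2} n^2|u|^2 \le n_+^2 \tfrac{L}{2}\,\sup|u|^2$ combined with a pointwise bound $\sup|u|\le$ something, or, more robustly, I would directly estimate $\int_0^{L/2} n^2 |u|^2 < n_+^2 \cdot \tfrac{L}{2} \cdot |u(0)|^2 \cdot C$ using the smallness of $\int|u'|^2$ from the displayed inequality above; this yields $|\Im\omega| = \tfrac{|u(0)|^2}{2\int n^2|u|^2} > \tfrac{1}{n_+^2 L}\cdot(\text{const})$, contradicting $|\Im\omega|\le \tfrac{1}{n_+^2 L}$. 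The main obstacle, and the step requiring care, is exactly this last estimate: controlling $\int_0^{L/2} n^2|u|^2$ from above purely in terms of $|u(0)|^2$ and the (small) Dirichlet energy $\int_0^{L/2}|u'|^2$, with the constant sharp enough to land at $\tfrac{1}{n_+^2 L}$ rather than a weaker constant — this is where the factor of $L$ (not $L/2$) and the precise threshold must be reconciled, presumably via a Poincaré inequality on $[0,L/2]$ with the Neumann-at-midpoint / prescribed-value-at-$0$ boundary data, for which the optimal constant is $(L/2)^2/\pi^2$ or similar, and then using $\int|u'|^2 < |\Im\omega||u(0)|^2 \le \tfrac{1}{n_+^2 L}|u(0)|^2$ to close the loop. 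I would expect to also need to rule out the degenerate case $u(0)=0$, which by the outgoing condition and unique continuation would force $u\equiv 0$.
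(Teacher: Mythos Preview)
Your setup is sound---the reduction by symmetry and the variational identities on the half-interval are correct and lead to
\[
\int_0^{L/2}|u'|^2 < |\Im\omega|\,|u(0)|^2,\qquad
|\Im\omega| = \frac{|u(0)|^2}{2\int_0^{L/2} n^2|u|^2}.
\]
But the closing step is where the proposal breaks down. You actually write down the right idea in passing (``$\Re(\omega^2)<0$ makes $u$ essentially convex/monotone in modulus'') and then get the direction of the inequality backwards and abandon it. The point is this: when $\Re(\omega^2)\le 0$, Kato's inequality (or in one dimension just direct computation) gives $(|u|)'' \ge \Re\!\big(\tfrac{\bar u}{|u|}u''\big) = -\Re(\omega^2)\,n^2|u| \ge 0$, so $|u|$ is \emph{convex} on $[0,L]$. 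Since symmetry of $n$ forces $|u(0)|=|u(L)|$, convexity (equivalently, the maximum principle for the subharmonic function $|u|$) yields $|u(x)|\le |u(0)|$ for all $x\in[0,L]$, not $\ge$. This immediately gives
\[
\int_0^L n^2|u|^2 \le n_+^2\,L\,|u(0)|^2,
\]
and plugging into the $\tau$-identity produces $|\Im\omega|\ge \tfrac{1}{n_+^2 L}$ with the exact constant, no Poincar\'e gymnastics required.

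Your Poincar\'e/Cauchy--Schwarz route, by contrast, is unlikely to close with the sharp constant: writing $u(x)=u(0)+\int_0^x u'$ and bounding crudely loses a factor (from, e.g., $(a+b)^2\le 2a^2+2b^2$), and the resulting inequality for $\int n^2|u|^2$ will carry an extra term of order $L^2|\Im\omega|\,|u(0)|^2$ that only yields something like $|\Im\omega|\gtrsim c/(n_+^2 L)$ with $c<1$. You correctly flag this as the obstacle, but the resolution is not a sharper Poincar\'e constant---it is the maximum-principle argument you discarded. That is exactly how the paper proceeds.
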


\begin{proof} 
The proof of this theorem follows \cite{II:1986uq}. Let $|\Im \omega| > |\Re \omega|$ and we'll show that $|\Im \omega | > \frac{1}{n_+^2 L}$. Using Eq. \eqref{eqn:tau} and $|u(0)| = |u(L)|$ (see Prop. \ref{prop:uSym}), we have
\begin{equation}
\label{eqn:ratioTri}
|\Im \omega|  \geq \frac{|u(0)|^2} {n_+^2 \int_0^L |u|^2 \ud x }  . 
\end{equation}
Kato's inequality \cite{RS2} and  $ |\Im \omega| \geq |\Re \omega| \ \Rightarrow \ \Re(\omega^2) \leq 0$ then give 
$\Delta |u| \geq \Re \left( \frac{\overline{u}}{|u|} \Delta u\right) = -  \Re(\omega^2) n^2 |u| \geq 0$.
We now apply the maximum principle to the subharmonic function $|u(x)|$ to obtain
$ |u(x)| \leq |u(0)| $.
It  now follows from Eq. \ref{eqn:ratioTri} that $|\Im \omega | > \frac{1}{n_+^2 L}$. 
\end{proof}

\section{Existence of a solution for the spectral optimization problem}
\label{sec:existence-of-optimizer}

In this section, we consider the spectral optimization problem in dimension $d=1,2,3$ with admissible set $\calA (\Omega,n_-,n_+)$, as defined in Eq. \eqref{A1-admiss}, the set of  $n(\x)$ satisfying upper and lower bounds on the compact set $\overline{\Omega}$ with $n(\x)\equiv1$ for $\x\notin\overline{\Omega}$, {\it i.e.}
\begin{equation}
n_-\le n(x)\le n_+,\ \ x\in\Omega,\ \ {\rm and}\ \ \ n(x)\equiv1,\ \ \ x\notin\Omega. 
\nonumber\end{equation}
Recall ${\rm Res}_n^\rho$ as defined in \eqref{eqn:rhoRes} is the set of scattering frequencies, $\omega$, for the structure $n(\x)$ such that $|\Re \omega| \leq \rho$. 
\medskip

\begin{thm}
\label{thm:exist}  Consider the scattering resonance problem on $\mathbb{R}^d,\ d=1,2,3.$
Fix $\rho\ge0$.  
Assume that there exists $n\in \calA$ such that $\Res_n^\rho \neq \emptyset$. Then the double infimum, defined in  \eqref{gen-min}, is strictly  positive and is attained for an admissible structure. That is, there exists $n_\star \in \calA$, with associated longest-lived resonance mode, $u_\star$,  of frequency $\omega_\star \in \Res_n^\rho$ and such that $\left|\Im\omega_\star \right| = \Gamma^\rho_\star(\calA) >0$.  
\end{thm}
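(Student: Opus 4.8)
The plan is to prove existence of the optimizer by the direct method of the calculus of variations: take a minimizing sequence of admissible structures, extract a weakly-convergent subsequence using compactness of the admissible set, pass to the limit in the scattering resonance problem, and verify that the limiting structure attains the infimum. Throughout, the strict positivity $\Gamma^\rho_\star(\calA)>0$ for $d=1$ follows immediately from the a priori lower bound in Theorem~\ref{thm:aprioriBnd}; for $d=2,3$ an analogous lower bound (or a compactness/contradiction argument excluding $\Im\omega\to 0$ with $|\Re\omega|\le\rho$ bounded) is needed, and I expect this to be essentially a dimensional variant of the one-dimensional estimate applied to each spherical-harmonic-reduced radial problem.

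\textbf{Step 1: Minimizing sequence and compactness of $\calA$.} Choose $n_k\in\calA$ and $\omega_k\in\Res_{n_k}^\rho$ with $|\Im\omega_k|\to\Gamma^\rho_\star(\calA)$. Since $n_-\le n_k\le n_+$ pointwise, the sequence $\{n_k\}$ is bounded in $L^\infty(\Omega)$, hence in $L^2(\Omega)$; passing to a subsequence, $n_k^2 \warrow m$ weakly in $L^2(\Omega)$ for some limit $m$ with $n_-^2\le m\le n_+^2$ a.e. (the pointwise bounds survive weak limits since $\{g: n_-^2\le g\le n_+^2\}$ is convex and strongly closed, hence weakly closed). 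Since $|\Re\omega_k|\le\rho$ and $|\Im\omega_k|$ is bounded (converging to a finite limit), $\{\omega_k\}$ is bounded in $\mathbb{C}$; pass to a further subsequence so that $\omega_k\to\omega_\star$ with $|\Re\omega_\star|\le\rho$ and $|\Im\omega_\star| = \Gamma^\rho_\star(\calA)$.

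\textbf{Step 2: Compactness of the resonance modes and passage to the limit.} Normalize the modes, e.g. $\|u_k\|_{L^2(\Omega)}=1$ (or normalize by a boundary value as in Section~\ref{sec:scattering-resonances}). Using the integral equation \eqref{SRP}, the right-hand side is $\omega_k^2$ times the outgoing Green's operator applied to $[n_k^2-1]u_k$, which is compact from $L^2(\Omega)$ to a better space (by elliptic regularity / the smoothing of $G$ — in $d=1,3$ the kernel is explicit and bounded, in $d=2$ it has a logarithmic but still $L^2$ singularity). Since $[n_k^2-1]u_k$ is bounded in $L^2(\Omega)$, $u_k$ lies in a precompact set; pass to a subsequence with $u_k\to u_\star$ strongly in $L^2(\Omega)$ (and the normalization passes to the limit, so $u_\star\neq0$). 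The only delicate term is the product $[n_k^2-1]u_k$: we have $n_k^2-1\warrow m-1$ weakly in $L^2$ and $u_k\to u_\star$ strongly in $L^2$, so the product converges weakly in $L^1(\Omega)$ to $(m-1)u_\star$; combined with $\omega_k^2\to\omega_\star^2$ and continuity of $G(\cdot,\omega)$ in $\omega$, one obtains in the limit
\begin{equation*}
u_\star(\x) = \omega_\star^2 \int_\Omega G(|\x-\y|,\omega_\star)\,[m(\y)-1]\,u_\star(\y)\,\ud\y.
\end{equation*}

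\textbf{Step 3: Realize the limit by an admissible index and conclude.} The limiting equation is the SRP for an index $n_\star$ with $n_\star^2 = m$, i.e. $n_\star = \sqrt{m}\in\calA$ since $n_-\le n_\star\le n_+$ a.e.; thus $(\omega_\star,u_\star)$ is a genuine scattering resonance pair for $n_\star$. Because $u_\star\not\equiv0$ and $|\Re\omega_\star|\le\rho$, we have $\omega_\star\in\Res_{n_\star}^\rho$, hence $\Gamma^\rho[n_\star]\le|\Im\omega_\star| = \Gamma^\rho_\star(\calA)$; since $\Gamma^\rho_\star(\calA)$ is the infimum over all admissible $n$, equality holds and $n_\star$ is a minimizer with optimal resonance $\omega_\star$. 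Finally $\Gamma^\rho_\star(\calA) = |\Im\omega_\star|>0$ by the a priori bound (Theorem~\ref{thm:aprioriBnd} in $d=1$ and its analogue in $d=2,3$), which also rules out the degenerate possibility $\Im\omega_\star=0$ — a real resonance frequency with nontrivial outgoing mode is impossible for compactly supported $n^2-1$.

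\textbf{Main obstacle.} The crux is Step~2: ensuring $u_\star\neq 0$ after passing to the limit (i.e. that the normalization does not escape — handled by the strong $L^2$-compactness coming from the smoothing properties of the Green's operator) and correctly handling the weak-times-strong product $[n_k^2-1]u_k$ so that no mass is lost. A subtlety worth flagging is that $G(|\x-\y|,\omega)$ depends on $\omega$, which is itself varying with $k$; one needs locally uniform (in $\omega$, near $\omega_\star$) bounds on the Green's operator to push the compactness and the convergence through simultaneously. In $d=2$ the branch-point structure of $H_0^{(1)}$ at $\omega=0$ means one should keep $\omega_\star$ away from the origin, which is automatic here since $\Gamma^\rho_\star(\calA)<\infty$ is attained at a resonance and $\Im\omega_\star<0$.
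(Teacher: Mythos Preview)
Your overall strategy---direct method with a minimizing sequence, compactness of structures and modes, and passage to the limit in the integral equation \eqref{SRP}---matches the paper's. Two points of technical divergence are worth noting.

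\textbf{Strict positivity.} The paper does not invoke a higher-dimensional analogue of Theorem~\ref{thm:aprioriBnd} (none is proved there); instead it argues by contradiction via the unique continuation principle: if $\Gamma^\rho_\star=0$ then $\omega_\star\in\mathbb{R}$, and a nontrivial outgoing solution of the Helmholtz equation with real frequency and compactly supported $n^2-1$ forces $u_\star\equiv 0$. You mention exactly this at the end of Step~3, so you have the right idea---but drop the speculation about spherical-harmonic reductions of Theorem~\ref{thm:aprioriBnd}, which is neither needed nor available.

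\textbf{Passage to the limit in the product term.} You pass to the limit via the weak-times-strong pairing $(n_k^2-1)\warrow(m-1)$ weak-$*$ in $L^\infty$ against $u_k\to u_\star$ strongly in $L^2$; the product then converges weakly in $L^2$ (not merely $L^1$, which is what you wrote), and that is what you need since $G(|\x-\cdot|,\omega_\star)\in L^2(\Omega)$ for $d\le 3$. The paper instead obtains uniform convergence $u_m\to u_\star$ on $\overline\Omega$ (from the $H^2$ bound $\|\Delta u_m\|_{L^2}\le|\omega_m|^2 n_+^2\|u_m\|_{L^2}$ and Rellich/H\"older embedding), then asserts pointwise a.e.\ convergence of $n_m$ along a subsequence and applies dominated convergence with the majorant $K|\x-\y|^{2-d}$. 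Your weak-times-strong route is arguably cleaner, since weak $L^2$ convergence alone does not furnish a pointwise a.e.\ convergent subsequence. For the compactness of $\{u_k\}$ itself, the paper's direct PDE/$H^2$ bound and your Green's-operator smoothing are equivalent, and both need (as you correctly flag) uniformity in $\omega$ over the compact set where the $\omega_k$ live.
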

\begin{proof}
 Since $\Res_n^\rho\ne\emptyset$ and $\Res_n^\rho\subset\{ \omega \colon \ \Im\omega \leq 0\ \}$, we have $0\le \left|\Im\omega\right|<\infty$ and  there is a minimizing sequence
$\left\{ n_m \right\}_{m=1}^\infty \subset {\cal A}$, such that 
\begin{equation}
\inf\{\ \left|\Im\omega\right| \colon \omega\in \Res_{n_m}^\rho \ \} \downarrow \Gamma^\rho_\star\ge0, \  \ \text{as}\ m\uparrow\infty\ .
\end{equation}
We first show that $\Gamma_\star^\rho$ is attained and then conclude the proof by showing $\Gamma_\star^\rho>0$.

Let $\left(\ \omega_m,u_m(\x,\omega_m)\ \right)$ denote a sequence of resonance pairs corresponding to this minimizing sequence of structures in $\calA$.  Since $[-\rho,\rho]$ is compact,  there exists a  convergent subsequence, which we continue to denote by $\{\omega_m\}$,  with $\omega_m\to\omega_\star$.

Since $\Omega$ is bounded, we have  $\Omega\subset B_R(0)$, where $B_R(0)$ denotes the open ball of radius $R$ about the origin.
By linearity and boundedness of $B_R(0)$ we can impose the normalization $\|u_m\|_{L^2(B_R(0))} = 1$. 
Squaring the differential equation for $u_m$, 
$-\Delta u_m(x)=\omega_m^2 n_m^2(\x) u_m(\x)$, and integrating over $B_R(0)$, we obtain
\[ \int_{B_R(0)} \left|\Delta u_m(\x)\right|^2\ d\x\ \le \ |\omega_m|^4\ n_+^4\ \int_{B_R(0)} |u_m(\x)|^2 d\x\ . \]
Therefore, since $\|u_m\|_{L^2(B_R(0))}=1$, 
 the sequence $\{u_m\}$ is uniformly bounded in $H^2(\mathcal O)$, for any open $\mathcal O$, whose closure is a compact subset of $B_R(0)$. Thus, by Rellich's Lemma, for any $s<2$ there exists $u_\star\in H^s(\Omega)$ and  a strongly convergent subsequence converging to $u_\star$. Moreover, $ u_m$ is uniformly H\"older continuous with exponent $\alpha\in(0,1/2)$. Thus, $\left\{u_m(\x)\right\}$ is uniformly bounded and equicontinuous. Therefore, there exists a subsequence, again denoted $\{u_m\}$, such that 
 $u_m\to u_\star$ uniformly in $\overline{\Omega}$. It follows that $\|u_\star\|_{L^2(\Omega)}=1$ and thus $u_\star$ is nonzero. 
 
The uniform  bounds $n(\x) \subset [n_-, n_+]$ imply that the sequence $\{n_m\}$ is uniformly bounded in $L^2(\Omega)$, and therefore  along some subsequence converges weakly in $L^2(\Omega)$ to some $n_\star$, \ie, $n_m \warrow n_\star$ with $n_\star \in \calA$. Furthermore, $n_m(\x) \rightarrow n_\star(\x)$ a.e. in $\Omega$.

It remains to show that $(u_\star(\cdot,\omega_\star),\omega_\star)$ is a resonance pair, \ie,
 $u_\star$ is non-trivial (established just above) solution of \eqref{SRP}.
For each $m\in \mathbb N$, we also have that  
\begin{equation}
u_m(\x) =  \omega_m^2 \int_\Omega G(|\x-\y|,\omega_m) [n_m^2(\y) - 1]  u_m(\y) \ud \y. 
\label{um-eqn}
\end{equation}
As shown above, the right hand side of \eqref{um-eqn} converges to $u_\star(\x)$ uniformly on $\overline{\Omega}$. Therefore to establish \eqref{SRP}, it suffices to show that
\begin{equation}
\omega_m^2 \int_\Omega G(|\x-\y|,\omega_m) [n_m^2(\y) - 1]  u_m(\y) \ud \y \ \longrightarrow \ \omega_\star^2 \int_\Omega 
G(|\x-\y|,\omega_\star) [n_\star^2(\y) - 1]  u_\star(\y) \ud \y 
\label{um2ustar}
\end{equation}
for  $\x\in\overline{\Omega}$. 
For each $\x\in\overline{\Omega}$, the integrand of \eqref{um2ustar} converges in $\y$ pointwise a.e. to the expression with the subscript $m$ replaced by $\star$. 
Also for each $\x\in\overline{\Omega}$, the integrand (as a function of $\y$) is dominated by an integrable function; there exists a constant $K$ such that 
\[ \left|G(|\x-\y|,\omega_m) [n_m^2(\y) - 1]  u_m(\y)\right|\ \le K\ |\x-\y|^{2-d},\] 
uniformly in $m$. 
Hence, by the dominated convergence theorem,   \eqref{um2ustar} holds.

   Finally, we claim that $\Gamma_\star^\rho>0$. Suppose not. Then $\Gamma_\star^\rho=0$ and  the scattering resonance problem \eqref{SRP} has a non-trivial solution with real frequency $\omega_\star$. By the unique continuation principle \cite{Colton:1998fk,Koch-Tataru:06} $u_\star\equiv0$, a contradiction.
\end{proof}
\medskip

\begin{rem} 
In the example of Appendix \ref{sec:simple-example}, we showed that in dimensions $d=2,3$ for  $\Omega = \{ |\x|<a\}$ and $n(\x) = 1+ n_0 \mathbf{1}_\Omega$, the resonances approach the real axis as the angular momentum $\ell \uparrow \infty$.  In section \ref{sec:1dnum}, we show that in dimension $d=1$, the sequence $n_\star^k$ for increasing $\rho^k \uparrow \infty$ is such that $| \Im \omega_\star^k | \downarrow 0$ with $ | \Re \omega_\star^k |\approx \rho^k \uparrow \infty$. Thus for $d=1,2,3$ an optimal solution for the limit $\rho \uparrow \infty$  is not achieved.
This result contrasts the behavior of optimal resonances of the Schr\"odinger operator  \cite{Harrell:1982uq,II:1986uq,Svirsky:1987fk}.
\end{rem}

\section{Local optimizers are piecewise constant structures which saturate the constraints}
\label{sec:bangbang}
In this section, we focus on properties of locally  optimal solutions of the design problem
\begin{equation}
\label{eqn:designProb}
\min_{n \in \calA}  \ \Gamma^\rho[n] .
\end{equation}
We begin by computing the variation of $\omega$ with respect to changes in the index $n(\x)\in 
\mathcal{A}\subset L^\infty$. Assume a mapping $\mathcal J:\mathcal{A}\to\mathbb{C},\ n\mapsto\mathcal{J}[n]$.
We say the mapping $\mathcal{J}$ is Fr\'echet  differentiable at  $n\in\mathcal{A}$ if there exists a mapping $n\in\mathcal{A}\mapsto \frac{\delta \mathcal J}{\delta n}\in L^1(\Omega)$ such that for all 
$\delta n\in L^\infty$ such that $n+\delta n\in\mathcal{A}$ and $\|\delta n\|_{L^\infty}$ sufficiently small we have
\[ \left|\ \mathcal J[n + \delta n] - \mathcal J[n] -\left\langle \frac{\delta \mathcal J}{\delta n}, \delta n  \right\rangle\ \right|\ \to\ 0,\ {\rm as}\ \| \delta n \|_{L^\infty(\Omega)}\to\ 0\ . \]
Here $\langle f,g \rangle  = \int_{\Omega} \overline{f(\x)}\ g(\x) \ud \x$, defined for $f\in L^1$ and $g\in L^\infty$.
\medskip

\begin{prop}
\label{prop:gradomega}
Let $(\omega, u(\x,\omega))$ be a nondegenerate scattering resonance pair of the scattering resonance problem \eqref{SRP} for index of refraction,  $n(\x)$.  Then 
\begin{enumerate}
\item The first variation of $\omega[n]$ with respect to $n(\x)$ is given by 
\begin{equation}
\label{eqn:gradomega}
\frac{\delta \omega}{\delta n}(\x) =  - 2   \overline{\alpha}  \ \overline{\omega}^2\ n(\x)\  \overline{u(\x)}^2 
\end{equation}
where $\alpha \in \mathbb C \setminus \{0\}$ depends on $u$. 
\item In one-dimension, with $\Omega = [0,L]$, the first variation is given by 
 \eqref{eqn:gradomega} with 
 \begin{align}
 \label{eqn:1dalpha}
 \alpha^{-1} = 2\omega \int_0^L n^2 u^2 + \imath [u^2(0) + u^2(L)] 
 = \frac{1}{\omega} \int_{0}^{L} u_x^2 + \omega^2 n^2 u^2
 \end{align} 
\end{enumerate}
\end{prop}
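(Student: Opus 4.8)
The plan is to derive the first variation by a standard adjoint/perturbation argument applied directly to the integral equation \eqref{SRP}. Write the scattering resonance problem as $F(\omega, u; n) := u - \omega^2 \mathcal{G}_\omega\big[(n^2-1)u\big] = 0$, where $\mathcal{G}_\omega$ is the outgoing Green's operator with kernel $G(|\x-\y|,\omega)$. Since $(\omega, u)$ is a nondegenerate resonance pair, the linearization $\partial_u F(\omega,u;n)$ has a one-dimensional kernel spanned by $u$, and by the Fredholm alternative its cokernel is spanned by some $\psi \ne 0$ (the ``left eigenfunction''). Perturbing $n \mapsto n + \delta n$ and correspondingly $\omega \mapsto \omega + \delta\omega$, $u \mapsto u + \delta u$, and collecting first-order terms gives an equation of the form $\partial_u F \cdot \delta u = -\partial_\omega F \cdot \delta\omega - \partial_n F \cdot \delta n$. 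Applying the solvability condition (pairing against $\psi$) eliminates $\delta u$ and isolates $\delta\omega$ as a ratio: $\delta\omega = -\langle \psi, \partial_n F \cdot \delta n\rangle / \langle \psi, \partial_\omega F\rangle$. A short computation with the explicit $\omega^2(n^2-1)$ dependence shows $\partial_n F \cdot \delta n$ contributes a term proportional to $\omega^2 \, n \, u \, \delta n$, and after transferring $\mathcal{G}_\omega$ onto $\psi$ (using that $\psi$ is, up to normalization, proportional to $\overline{u}$ because the relevant operator is complex-symmetric, not self-adjoint), one recovers $\frac{\delta\omega}{\delta n}(\x) = -2\bar\alpha\,\bar\omega^2\, n(\x)\,\overline{u(\x)}^2$ with the constant $\alpha$ absorbing the normalization factor $\langle\psi,\partial_\omega F\rangle^{-1}$; nondegeneracy guarantees $\alpha \ne 0$.

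For part (2), in one dimension it is cleaner to work from the differential formulation \eqref{1dSRP} rather than the integral equation, since there the adjoint pairing becomes an explicit boundary-term computation. Multiply the perturbed equation $\partial_x^2(u+\delta u) + (\omega+\delta\omega)^2 n^2(1 + \text{(correction from }\delta n\text{)})(u+\delta u) = 0$ by $u$ (not $\overline{u}$ — the bilinear, not sesquilinear, pairing is what diagonalizes the non-self-adjoint problem), integrate over $[0,L]$, integrate by parts twice, and use the outgoing boundary conditions \eqref{1dSRPb} for both $u$ and its perturbation. The $\delta u$ terms cancel by the unperturbed equation and boundary conditions; what survives at first order is $2\omega\,\delta\omega \int_0^L n^2 u^2 + \delta\omega \cdot \imath[u^2(0) + u^2(L)] + \omega^2 \int_0^L 2 n\, u^2\, \delta n = 0$, which gives \eqref{eqn:gradomega} together with the first expression for $\alpha^{-1}$ in \eqref{eqn:1dalpha}. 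The second expression, $\alpha^{-1} = \omega^{-1}\int_0^L u_x^2 + \omega^2 n^2 u^2$, then follows from an integration by parts: $\int_0^L u_x^2 = [u u_x]_0^L - \int_0^L u\, u_{xx} = \imath\omega(u^2(L)+u^2(0)) + \omega^2\int_0^L n^2 u^2$, so that $\omega^{-1}\int_0^L u_x^2 + \omega^2 n^2 u^2 = \imath(u^2(0)+u^2(L)) + 2\omega\int_0^L n^2 u^2$, matching the first form.

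The main obstacle — and the point that needs genuine care rather than routine bookkeeping — is the rigorous justification that $\delta u$ can be controlled and that the formal expansion is an honest Fréchet derivative in the $L^\infty \to \mathbb{C}$ sense of the definition given just before the proposition. This requires: (i) an implicit-function-theorem argument near the nondegenerate pair, using that $\partial_u F$ is Fredholm of index zero with the resonance mode removed (i.e. invertible on the complement), so that $\omega$ depends smoothly on $n$ in a neighborhood; and (ii) verifying the correct functional setting (e.g. $u \in C^1 \cap H^2_{\rm loc}$, $n \in L^\infty$) so the pairings $\langle \delta\omega/\delta n, \delta n\rangle$ and the remainder estimate make sense. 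I would handle (i) by citing standard analytic-Fredholm/perturbation theory for the Green's operator and noting nondegeneracy is exactly the hypothesis that makes the relevant $2\times 2$ bordered system invertible; the constant $\alpha$ being nonzero is then equivalent to $\langle \psi, \partial_\omega F\rangle \ne 0$, which is precisely the statement that $\omega$ is a simple zero — again guaranteed by nondegeneracy. The sign and the factor of $2$ are fixed by the explicit $d=1$ computation, which also serves as a consistency check on the abstract formula.
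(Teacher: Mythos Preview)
Your proof is correct. For part (1), your abstract Fredholm/solvability-condition argument is essentially the paper's approach in different clothing: the paper takes variations of the integral equation, pairs against a test function $v$, and then chooses $v = \overline{\omega}^{2} m\, \overline{u}$ (with $m = n^2 - 1$) to annihilate the $\delta u$ terms --- this $v$ is precisely your cokernel element $\psi$, and the paper's verification that $(\mathrm{Id} - \overline{\omega}^{2} R_\omega^{*})v = 0$ is your ``$\psi$ proportional to $\overline{u}$ by complex symmetry'' made explicit (note the weight $m$, which you suppress). For part (2), however, you take a genuinely more elementary route: the paper obtains \eqref{eqn:1dalpha} by specializing its general formula for $\alpha^{-1}$ (which involves the $\omega$-derivative $\dot G$ of the Green's kernel) to $d=1$ using the explicit one-dimensional Green's function, whereas you bypass $\dot G$ entirely by working directly from the differential formulation \eqref{1dSRP} and tracking the boundary terms in the bilinear pairing. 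Your computation is cleaner in one dimension and delivers both forms of $\alpha^{-1}$ via a single integration by parts; the paper's route has the advantage of first producing a dimension-independent expression for $\alpha$ and then specializing.
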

\begin{proof} The proof of Proposition \ref{prop:gradomega} is given in Appendix \ref{sec:variations}. \end{proof}

\begin{rem}The following are properties of the variation $\frac{\delta \omega}{\delta n}(\x)$:
\begin{enumerate}
\item   From the explicit expression for $\frac{\delta \omega}{\delta n}(\x)$, we see that it  is invariant 
under  $u(\x)\mapsto cu(\x),\ \ c\in\mathbb{R}$.
\item For finite degeneracy, \eqref{eqn:gradomega} remains valid, except the correct (perturbation dependent) scattering resonance mode must be chosen from the eigenspace. 
\item Suppose $d=1$ and $\Omega=[0,L]$. If $n(x)$ is symmetric, \ie, $n(x)=n(L-x)$, then  $\frac{\delta \omega}{\delta n}(x)$ is also symmetric. This follows from Prop. \ref{prop:uSym}. 
\item If $d=2$ or $d=3$, and $n(\x)$ is radially symmetric, then $\frac{\delta \omega}{\delta n}(\x)$ is \emph{not} radially symmetric if the mode $u(\x,\omega)$ has a nontrivial angular dependence. 
\end{enumerate}
\end{rem}
\medskip

\begin{prop}
\label{prop:bangbang}
Let $n_\star(\x) \in \calA$ be a  local minimizer of the design problem \eqref{eqn:designProb}. If $\omega_{\star}$ is nondegenerate and $| \Re \omega_\star | < \rho $,  then $n_\star(\x)$  is piecewise constant and attains the material bounds, \ie,
$\left[ n_\star(\x) - n_- \right] \left[ n_\star(\x) - n_+ \right] = 0  \text{ for almost every  } \x \in \Omega$. 
\end{prop}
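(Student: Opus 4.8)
The plan is to argue by contradiction: suppose $n_\star$ is a local minimizer with nondegenerate optimal resonance $\omega_\star$, $|\Re\omega_\star|<\rho$, but the conclusion fails, so there is a set $S\subset\Omega$ of positive measure on which $n_- < n_\star(\x) < n_+$. On such a set the admissibility constraint $\calA$ is not active, so for small $\|\delta n\|_{L^\infty}$ supported in $S$ (of either sign), the perturbed index $n_\star+\delta n$ still lies in $\calA$. The objective $\Gamma^\rho$ near $n_\star$ is, because $\omega_\star$ is nondegenerate and stays strictly inside the strip $|\Re\omega|<\rho$, given locally by $|\Im\omega[n]|$ where $\omega[n]$ is the smooth branch of the resonance emanating from $\omega_\star$ (Proposition~\ref{prop:gradomega} guarantees Fr\'echet differentiability). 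Hence a necessary condition for a local minimum is that the first variation of $|\Im\omega[n]|$ vanishes for all admissible directions $\delta n$ supported in $S$.

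The next step is to translate this into a pointwise condition. From \eqref{eqn:gradomega}, $\frac{\delta\omega}{\delta n}(\x) = -2\overline{\alpha}\,\overline{\omega_\star}^{\,2} n_\star(\x)\,\overline{u_\star(\x)}^{\,2}$, so for real $\delta n$,
\begin{equation*}
\delta(\Im\omega) = \Im\!\left\langle \frac{\delta\omega}{\delta n},\delta n\right\rangle = \int_S \Im\!\left(-2\overline{\alpha}\,\overline{\omega_\star}^{\,2} n_\star(\x)\,\overline{u_\star(\x)}^{\,2}\right)\delta n(\x)\,\ud\x .
\end{equation*}
Writing $\delta(|\Im\omega|) = -\,\mathrm{sgn}(\Im\omega_\star)\,\delta(\Im\omega) = \delta(\Im\omega)$ since $\Im\omega_\star<0$, the local minimality at an interior (in the constraint sense) point forces the multiplier
\begin{equation*}
h(\x) := \Im\!\left(-2\overline{\alpha}\,\overline{\omega_\star}^{\,2}\, n_\star(\x)\, \overline{u_\star(\x)}^{\,2}\right)
\end{equation*}
to vanish for a.e.\ $\x\in S$ (otherwise, choosing $\delta n = \mp\varepsilon\,\mathrm{sgn}(h)\mathbf 1_{S'}$ on a subset $S'\subset S$ where $h$ has one sign would strictly decrease $|\Im\omega|$, contradicting local minimality). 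Since $n_\star>0$ on $S$, this says $\Im\!\left(\overline{\alpha}\,\overline{\omega_\star}^{\,2}\,\overline{u_\star(\x)}^{\,2}\right)=0$ a.e.\ on $S$, i.e.\ $\alpha\,\omega_\star^2\,u_\star(\x)^2$ is real-valued a.e.\ on $S$.

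The final and main step is to show this forces $|S|=0$, giving the contradiction. Here I would use the regularity of $u_\star$: by the discussion following \eqref{SRP}, any weak solution is $C^1\cap H^2_{\rm loc}$, and in fact $u_\star$ is real-analytic where $n_\star$ is (locally) constant; more robustly, $c := \alpha\,\omega_\star^2 \ne 0$ is a fixed nonzero constant and the condition is that $c\,u_\star(\x)^2\in\mathbb R$ on $S$. If $S$ had positive measure, pick a point $\x_0$ of density $1$ for $S$ at which $u_\star(\x_0)\neq 0$ (possible since $u_\star\not\equiv 0$ and the zero set of the continuous function $u_\star$ is closed with empty interior by unique continuation, so its complement meets $S$ in positive measure). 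By continuity, $u_\star(\x)^2$ is nonzero and nearly constant in argument near $\x_0$, so $\arg u_\star(\x)^2$ is confined to a small interval around $\arg u_\star(\x_0)^2$; the condition $c\,u_\star(\x)^2\in\mathbb R$ then pins $\arg u_\star(\x)^2$ to the discrete set $\{-\arg c,\ \pi-\arg c\}$ on $S$, forcing $\arg u_\star(\x)^2$ locally constant on $S$ near $\x_0$, hence $u_\star(\x)^2 = (\text{positive function})\cdot e^{i\theta_0}$ there; equivalently $e^{-i\theta_0/2}u_\star$ is real on a positive-measure set near $\x_0$. Using the Helmholtz equation $\Delta u_\star = -\omega_\star^2 n_\star^2 u_\star$ with $\omega_\star^2$ generally non-real (and $n_\star$ real), a real solution on a set of positive measure propagates — via elliptic regularity and unique continuation — to $e^{-i\theta_0/2}u_\star$ real on all of $\Omega$, which then forces $\omega_\star^2\in\mathbb R$, i.e.\ $\Im(\omega_\star^2)=0$. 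But \eqref{Imomega2} (and its analogue / the general boundary-flux identity in higher dimensions) together with $\Im\omega_\star<0$ and $u_\star\not\equiv 0$ gives $\Im(\omega_\star^2)\neq 0$ unless $\Re\omega_\star=0$, and even $\Re\omega_\star=0$ combined with $u_\star$ real on $\Omega$ contradicts the outgoing (radiation) condition since a nontrivial real outgoing solution cannot exist. Either way we reach a contradiction, so $|S|=0$ and $n_\star\in\{n_-,n_+\}$ a.e.

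\textbf{Main obstacle.} The delicate point is the last step: passing from ``$c\,u_\star^2$ is real on a positive-measure set'' to a global rigidity statement. The clean version uses that on each region where $n_\star$ is constant $u_\star$ is real-analytic, so $c\,u_\star^2 - \overline{c\,u_\star^2}$ is a real-analytic function vanishing on a set of positive measure, hence identically zero there; but since we do not yet know $n_\star$ is piecewise constant, I would instead invoke Kato's inequality / unique continuation for the second-order elliptic equation satisfied by $v:=e^{-i\theta_0/2}u_\star$ to deduce $\Im v\equiv 0$ on $\Omega$ from $\Im v=0$ on a positive-measure set together with $\Im v$ solving an elliptic PDE (real coefficients times $v$), and then rule out the resulting self-adjoint/real scenario using the radiation condition encoded in the integral equation \eqref{SRP}. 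Getting this argument to work uniformly in dimensions $d=1,2,3$ — in particular handling the angular structure of $u_\star$ in $d=2,3$ — is the part that needs care; the variational computation and the Fr\'echet differentiability are routine given Proposition~\ref{prop:gradomega}.
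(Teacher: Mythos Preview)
Your first two steps---deriving the first-order necessary condition from Proposition~\ref{prop:gradomega} and reducing it to the pointwise statement that $\alpha\,\omega_\star^2\,u_\star^2(\x)$ is real a.e.\ on $S=\{\x:\ n_-<n_\star(\x)<n_+\}$---are correct and coincide with the paper's proof.

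The divergence is in the final step, and here the paper takes a much shorter route that avoids exactly the obstacle you flag. You try to propagate the realness of $v=e^{-i\theta_0/2}u_\star$ from $S$ to all of $\Omega$ via unique continuation applied to $\Im v$, and then reach a contradiction with the radiation condition. As you partly anticipate, this is problematic: taking the imaginary part of $\Delta v+\omega_\star^2 n_\star^2 v=0$ gives
\[
\Delta(\Im v)+\Re(\omega_\star^2)\,n_\star^2\,\Im v \;=\; -\,\Im(\omega_\star^2)\,n_\star^2\,\Re v,
\]
an \emph{inhomogeneous} equation for $\Im v$ with source proportional to $\Re v$. So $\Im v$ does not satisfy a homogeneous second-order elliptic equation, and unique continuation from a positive-measure set does not apply to it directly; your proposed invocation of ``$\Im v$ solving an elliptic PDE (real coefficients times $v$)'' is not correct as stated.

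The paper sidesteps this entirely by not propagating realness at all. Once $v$ (a fixed complex multiple of $u_\star$) is real on $S$ near a point where $u_\star\neq0$, rewrite the Helmholtz equation there as
\[
\Delta v+\Re(\omega_\star^2)\,n_\star^2\,v \;=\; -\,i\,\Im(\omega_\star^2)\,n_\star^2\,v.
\]
The left side is real and the right side is purely imaginary, so both vanish; since $\Im(\omega_\star^2)\neq0$ and $n_\star>0$, this forces $v=0$, hence $u_\star=0$, on $S$. Now unique continuation is applied to $u_\star$ itself (which vanishes on the positive-measure set $S$), not to $\Im v$, and the coupled-system difficulty never arises. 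In short: you should use the realness of $v$ together with the imaginary part of the equation \emph{locally on $S$} to kill $u_\star$ there, rather than trying to globalize the realness first. That one observation replaces the whole ``main obstacle'' paragraph.
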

\medskip

\begin{rem}
Computational evidence for Proposition \ref{prop:bangbang} has been reported on in 
\cite{KaoSantosa,OptimizationOfScatteringResonances}. 
A similar result in one-dimension with slightly different boundary conditions is given in \cite{Karabash:2011uq}. 
This phenomena has also been studied in self-adjoint systems \cite{Krein:1955ye,Cox1990,Cox19902} and for the analogous problem for Schr\"odinger resonances \cite{II:1986uq}. In control theory, $n(\x)$ would be referred to as a  ``bang-bang control.''
\end{rem}
\begin{rem}
 Proposition \ref{prop:bangbang} has significance in applications because
\begin{enumerate}
\item the optimization problem is reduced to finding the $(d-1)$-dimensional interface between regions with constant $n(\x)$, and 
\item the computation of Helmholtz resonances for piecewise constant $n(\x)$ can be performed more efficiently using methods which utilize the free-space Green's function \eqref{eqn:explicitG-1}, {\it e.g.},  meshless methods, the method of particular solutions, and layer potential methods,
\item standard manufacturing techniques for photonic crystals involve creating air holes in a dielectric media \cite{pc-book}. Thus, in practice, $n(\x)$ only takes two values.
\end{enumerate}
\end{rem} 

\bigskip

\begin{proof}
Let $A\subset \Omega$ be defined to be the set
$A: = \{ \x \in \Omega \colon  n_- < n_\star(\x) < n_+ \}$ and 
$B\subset A$ be an arbitrary set. Local optimality requires that 
\begin{align*}
\left\langle \frac{\delta \Im \omega_\star}{ \delta n} , 1_B \right\rangle = 0 
&\iff \Im \left( \alpha \omega_\star^2 \int_B u_\star^2 \right) = 0\ \iff \Im \left( \alpha \omega_\star^2 u_\star^2(\x) \right) = 0  \quad \forall \x \in A
\end{align*}
since $B$ is arbitrary. We conclude that $\alpha \omega_\star^2 u_\star^2(\x)$ is a real-valued function on $A$. 

In the neighborhood of any $\x$ such that $u_\star(\x)\neq 0$, we can choose the sign such that $v(\x):= \sqrt{ \pm \alpha \omega_\star^2 u_\star^2(\x)}$ is a real-valued function which satisfies the complex  Eq. \eqref{helm}:
$
\Delta v + \Re(\omega_\star^2) n^2 v = - \imath \Im(\omega_\star^2) n^2 v.
$
Since the left-hand side of this equation is purely real and the right-hand side is purely imaginary, we conclude that $v\equiv 0$. Thus $u_\star \equiv0$ on $A$. 

Since $u(\x)$ satisfies \eqref{helm}, we know that by the unique continuation principle \cite{Colton:1998fk,Koch2001}, it cannot vanish on an open set of $\Omega$, or else $u_\star(\x)\equiv 0$, a contradiction. 
Thus $\text{meas}(A)=0$ and we identify it with the zero level set of $\Im \left( \alpha \omega_\star^2 u_\star^2(\x) \right)$. 
\end{proof}

\medskip

Proposition \ref{prop:bangbang} is further strengthened by the following 

\medskip

\begin{prop} 
\label{corr:nChar}
Let $n_\star(\x) \in \calA$ be a local minimizer of the design problem \eqref{eqn:designProb} 
and  $(u_\star,\omega_\star)$ denote the corresponding minimizing scattering resonance pair. 
If $\omega_{\star}$ is nondegenerate and $| \Re \omega_\star | < \rho $, then 
\begin{equation}
\label{eqn:nChar}
n_\star(\x) = \begin{cases}
n_+ &\quad  \Im \left( \alpha \omega_\star^2 u_\star^2(\x) \right) > 0 \\
n_- &\quad    \Im \left( \alpha \omega_\star^2 u_\star^2(\x) \right) < 0
\end{cases}
\end{equation}
where  $\alpha\in \mathbb C$ is defined as in Prop. \ref{prop:gradomega}. 
\end{prop}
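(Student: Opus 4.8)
The plan is to build directly on the proof of Proposition~\ref{prop:bangbang}, extracting more precise information from the first-order optimality condition rather than just deducing $\text{meas}(A)=0$. First I would recall that, writing $g(\x) := \Im\left(\alpha\,\omega_\star^2\,u_\star^2(\x)\right)$, the argument in the proof of Proposition~\ref{prop:bangbang} shows $g$ cannot vanish on a set of positive measure: if $g=0$ on a set of positive measure, then (after localizing where $u_\star\neq0$ and choosing a branch of the square root) one produces a real-valued solution $v$ of $\Delta v + \Re(\omega_\star^2)n^2 v = -\imath\,\Im(\omega_\star^2)n^2 v$, forcing $v\equiv0$, hence $u_\star\equiv0$ on an open set, contradicting unique continuation. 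So up to a null set, $\Omega$ is partitioned into $\Omega_+ := \{g>0\}$ and $\Omega_- := \{g<0\}$, and it remains only to identify $n_\star = n_+$ on $\Omega_+$ and $n_\star = n_-$ on $\Omega_-$.

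Next I would exploit that $n_\star$ is a local minimizer of $\Gamma^\rho[n] = |\Im\omega[n]|$ (equivalently, since $\Im\omega<0$, a local \emph{maximizer} of $\Im\omega[n]$), together with the admissibility constraint $n_-\le n\le n_+$. For any admissible perturbation direction $\delta n$ supported in $\Omega_+$ with $\delta n \ge 0$ (allowed wherever $n_\star < n_+$), the one-sided directional derivative of $\Im\omega$ must be $\le 0$ at a local maximizer. By Proposition~\ref{prop:gradomega}, $\frac{\delta(\Im\omega)}{\delta n}(\x) = \Im\left(\overline{-2\overline\alpha\,\overline\omega^2 n(\x)\overline{u(\x)}^2}\right) = -2n(\x)\,\Im\left(\alpha\omega^2 u(\x)^2\right) = -2n(\x)g(\x)$ (using that $n$ is real and taking care with the conjugation, which I would verify carefully). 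Thus $\langle \frac{\delta(\Im\omega)}{\delta n},\delta n\rangle = -2\int n_\star\, g\,\delta n$. On $\Omega_+$ we have $g>0$ and $n_\star>0$, so increasing $n$ (i.e. $\delta n\ge0$) strictly increases $\Im\omega$ unless no such increase is available — which forces $n_\star = n_+$ a.e.\ on $\Omega_+$. The symmetric argument with $\delta n\le0$ supported on $\Omega_-$ gives $n_\star = n_-$ a.e.\ on $\Omega_-$, completing the proof.

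The main subtlety — and the step I would be most careful about — is the sign bookkeeping in converting $\frac{\delta\omega}{\delta n}$ from Proposition~\ref{prop:gradomega} into $\frac{\delta(\Im\omega)}{\delta n}$ and then stating the correct optimality inequality: one must be precise that $\Gamma^\rho[n]=|\Im\omega|=-\Im\omega$ near the optimum, that a local minimizer of $\Gamma^\rho$ is a local maximizer of $\Im\omega$, and that for box constraints the admissible variations at a point $\x$ with $n_\star(\x)=n_+$ are only $\delta n(\x)\le0$ (and conversely at $n_-$), so the Karush--Kuhn--Tucker condition reads: $\frac{\delta(\Im\omega)}{\delta n}(\x)\le 0$ where $n_\star(\x)>n_-$ and $\frac{\delta(\Im\omega)}{\delta n}(\x)\ge0$ where $n_\star(\x)<n_+$. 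Combined with $\frac{\delta(\Im\omega)}{\delta n}(\x) = -2n_\star(\x)g(\x)$ and the established fact $g\neq0$ a.e., these inequalities force exactly the dichotomy \eqref{eqn:nChar}. A minor additional point is that the constant $\alpha$ is the same $\alpha$ appearing in Proposition~\ref{prop:gradomega} for the pair $(u_\star,\omega_\star)$, so no ambiguity of normalization enters; this should be noted explicitly since the sign of $g$ (hence the partition) depends on $\alpha$.
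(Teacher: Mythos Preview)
Your approach is essentially the paper's: invoke Proposition~\ref{prop:bangbang} for the a.e.\ bang-bang decomposition, then use the first-order (KKT) optimality condition together with the gradient formula of Proposition~\ref{prop:gradomega} to match the sign of $g(\x):=\Im(\alpha\omega_\star^2 u_\star^2(\x))$ with the value of $n_\star(\x)$. The only cosmetic difference is the order of the argument---the paper partitions $\Omega$ by the value of $n_\star$ and reads off the sign of $g$, while you partition by the sign of $g$ and read off $n_\star$; these are logically equivalent.

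One internal inconsistency to repair (exactly the step you flagged): with your own formula $\frac{\delta(\Im\omega)}{\delta n}=-2n_\star g$, on the set $\{g>0\}$ this quantity is \emph{negative}, so taking $\delta n\ge0$ gives $\Im\omega'=-2\int n_\star g\,\delta n\le0$, i.e.\ increasing $n$ \emph{decreases} $\Im\omega$, not increases it as you wrote. Redo the KKT inequality carefully with the admissible one-sided directions ($\delta n\le0$ where $n_\star=n_+$, $\delta n\ge0$ where $n_\star=n_-$) so that the chain of signs is self-consistent; the paper's proof runs through precisely the same equivalences, so once your bookkeeping is aligned you will recover its argument verbatim.
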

\begin{proof}
By Prop. \ref{prop:bangbang}, we may decompose $\Omega = \Omega_+ \cup \Omega_-$,  where  
\begin{align*}
\Omega_+ &= \{ \x \in \Omega \colon n_\star(\x) = n_+ \} \ \textrm{and}\ 
\Omega_- = \{ \x \in \Omega \colon n_\star(\x) = n_- \} . 
\end{align*}
Let $A\subset \Omega_+$ and $1_A$ be the indicator function for the set $A$. Local optimality of $n_\star$ implies that
\begin{align*}
\left\langle \frac{\delta \Im \omega_\star}{ \delta n} , 1_A \right\rangle < 0 
&\iff \Im \left( \alpha \omega_\star^2 \int_A u_\star^2 \right) > 0\
\iff \Im \left( \alpha \omega_\star^2 u_\star^2(\x) \right) > 0  \quad \forall \x \in \Omega_+
\end{align*}
since $A$ is an arbitrary set. The statement for $\Omega_-$ in \eqref{eqn:nChar} follows similarly. 
\end{proof}

\section{Computation of optimal one-dimensional $n(x)$}
\label{sec:1dnum}
In this section we describe one-dimensional computations that will be used to motivate analytical results in section \ref{sec:1d-bragg}. Computationally, we solve a local variant of the optimization problem (double-infimum) in \eqref{eqn:obj}. Following \cite{OptimizationOfScatteringResonances,KaoSantosa}, we pick a resonance $\tilde{\omega}$ and improve the structure $n(x)$, by local gradient methods,  to extend the lifetime of that particular resonance. 
Thus we solve the local optimization problem  
\begin{equation}
\label{eqn:optOneRes}
\textrm{Seek local minima of}\ \  |\Im \tilde{\omega}[n] |,\ \textrm{subject to}\ \ 
 n\in \calA.
\end{equation}
The local minimizers of Eq. \eqref{eqn:optOneRes} which additionally satisfy $|\Re {\tilde \omega}_\star | < \rho$ are local minima of Eq. \eqref{eqn:designProb}. 

\subsection{Computational method} \label{sec:CompMeth}
Below, we refer to the forward problem as the computation of the resonances for a given $n(x)$ and the optimization problem as the solution of Eq. \eqref{eqn:optOneRes}.

\paragraph{Forward problem} For a one-dimensional ($d=1$),  piecewise-constant, refractive index $n(x)$,  the resonances satisfying Eq. \eqref{1dSRP} are the roots of a nonlinear system of equations obtained by imposing transmission conditions at the material discontinuities.
In  \cite{OptimizationOfScatteringResonances}, this system of equations is derived and Newton's method is applied for finding the roots of this system.  
This method works extremely well if initialized sufficiently near the desired resonance. 
We initialize Newton's method by either (i) using a finite difference discretization of Eq. \eqref{1dSRP} to form a quadratic eigenvalue problem (QEP) which is solved using Matlab's \verb+polyeig+ command \cite{Tisseur:2001fk}, or (ii) using the $\omega$ computed at a previous optimization iteration. 
Other references that derive the transmission conditions at material discontinuities, including for two-dimensional, radially symmetric $n(\x)$ are  \cite{Yeh:1988fk,Yeh:1978fk}.

\paragraph{Optimization problem.} 
In Prop. \ref{prop:gradomega} and App. \ref{sec:variations}, we compute the  variation $\frac{\delta \omega}{ \delta n}$. Thus, a number of gradient-based methods are available to solve Eq. \eqref{eqn:optOneRes}.
In \cite{OptimizationOfScatteringResonances,KaoSantosa}, the authors use  gradient descent methods.
To solve Eq. \eqref{eqn:optOneRes}, we have applied a BFGS interior point method.

\subsection{Computational results}
\label{sec:compRes}

\begin{figure}[t!]
\begin{center}
\includegraphics[width=3in]{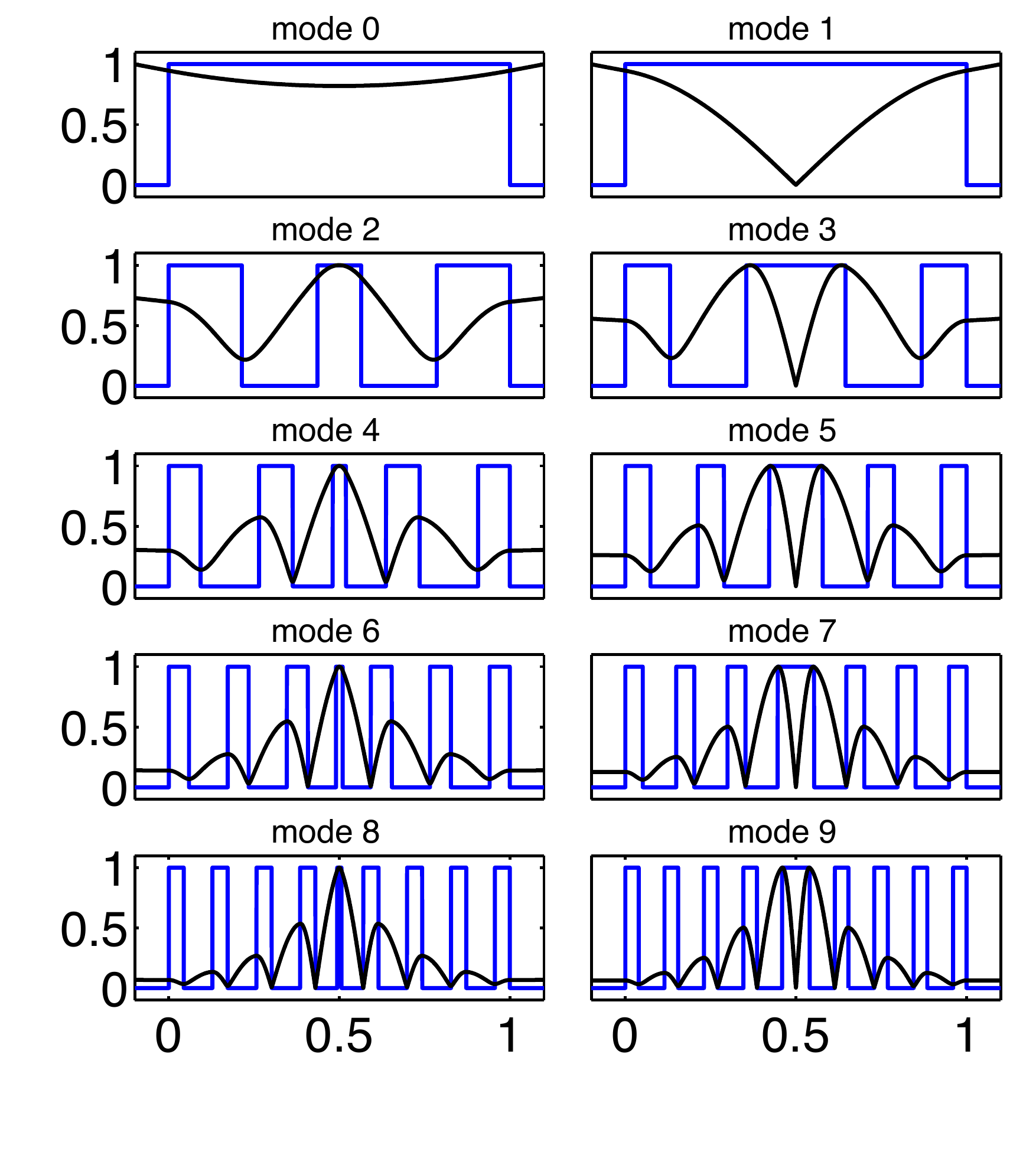} \ \
\includegraphics[width=2.5 in]{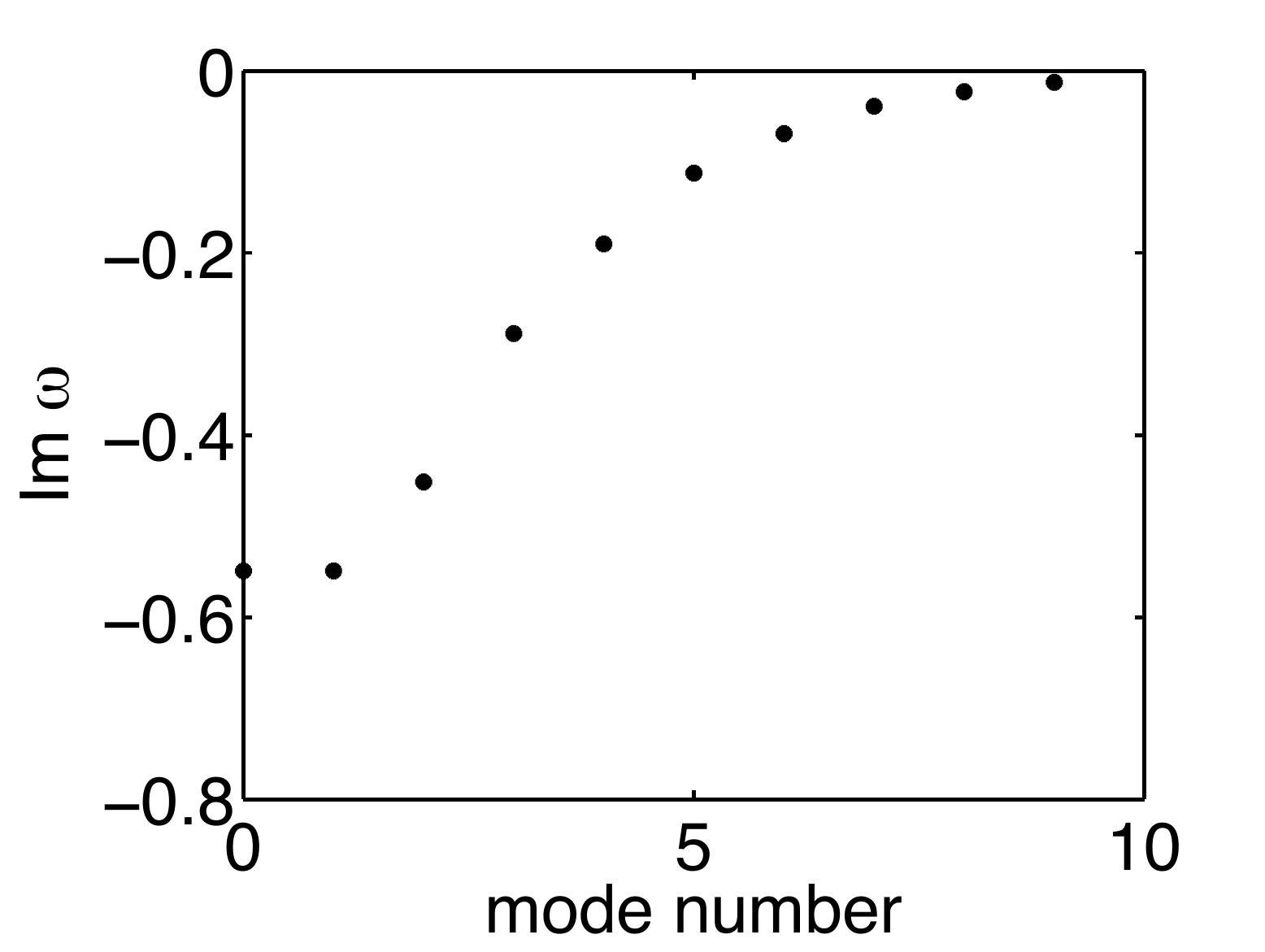}
\caption{Left plot: For $j=0,1,\ldots,9$, a plot of piecewise constant structure, $n^j_\star$, (blue) which  minimize \eqref{eqn:optOneRes2}. Corresponding mode modulus,  $|u^j_\star|$ (black)  is plotted
on the same axes. Right plot: Plot of $\Im \omega_\star^j$ for $j=0,1,\ldots,9$. }
\label{fig:OptModes}
\end{center}
\end{figure}

Let $\Omega = [0,L]$, $L=1$, $n_+=2$ and $n_-=1$. 
We define $\omega^j$, for $j=0,1,\ldots$, to be the scattering resonance for which the corresponding mode modulus has $j$ minima\footnote{except for mode $j=0$, whose modulus has one minima and whose eigenvalue  is purely imaginary} (see Fig.  \ref{fig:OptModes}). 
Using the method described in Sec. \ref{sec:CompMeth}, we solve
\begin{equation}
\label{eqn:optOneRes2}
n_\star^j := \arg \min_{n\in \calA} \ \   |\Im \omega^j[n] |. 
\end{equation}
 for $j=0,\ldots, 9$. For each $j$, we plot in 
Fig. \ref{fig:OptModes} the optimal refractive index $n_\star^j(x)$ and the corresponding resonance pairs denoted,
$(u_\star^j, \omega_\star^j)$. 
In Fig. \ref{fig:transCoef}, we plot the transmission coefficient modulus,  $|t^j(\omega)|$, associated with $n^j_\star(x)$. 
The transmission coefficient $t(\omega)$ is defined by the solution of the form
$$
u(x,\omega) = \begin{cases}
e^{\imath \omega x} + r(\omega) e^{-\imath \omega x} & x<0 \\
t(\omega) e^{\imath \omega x} &x>L
\end{cases}
\qquad \qquad \text{ for } \omega \in \mathbb R
$$

\begin{figure}[t!]
\begin{center}
\includegraphics[width=3.5in]{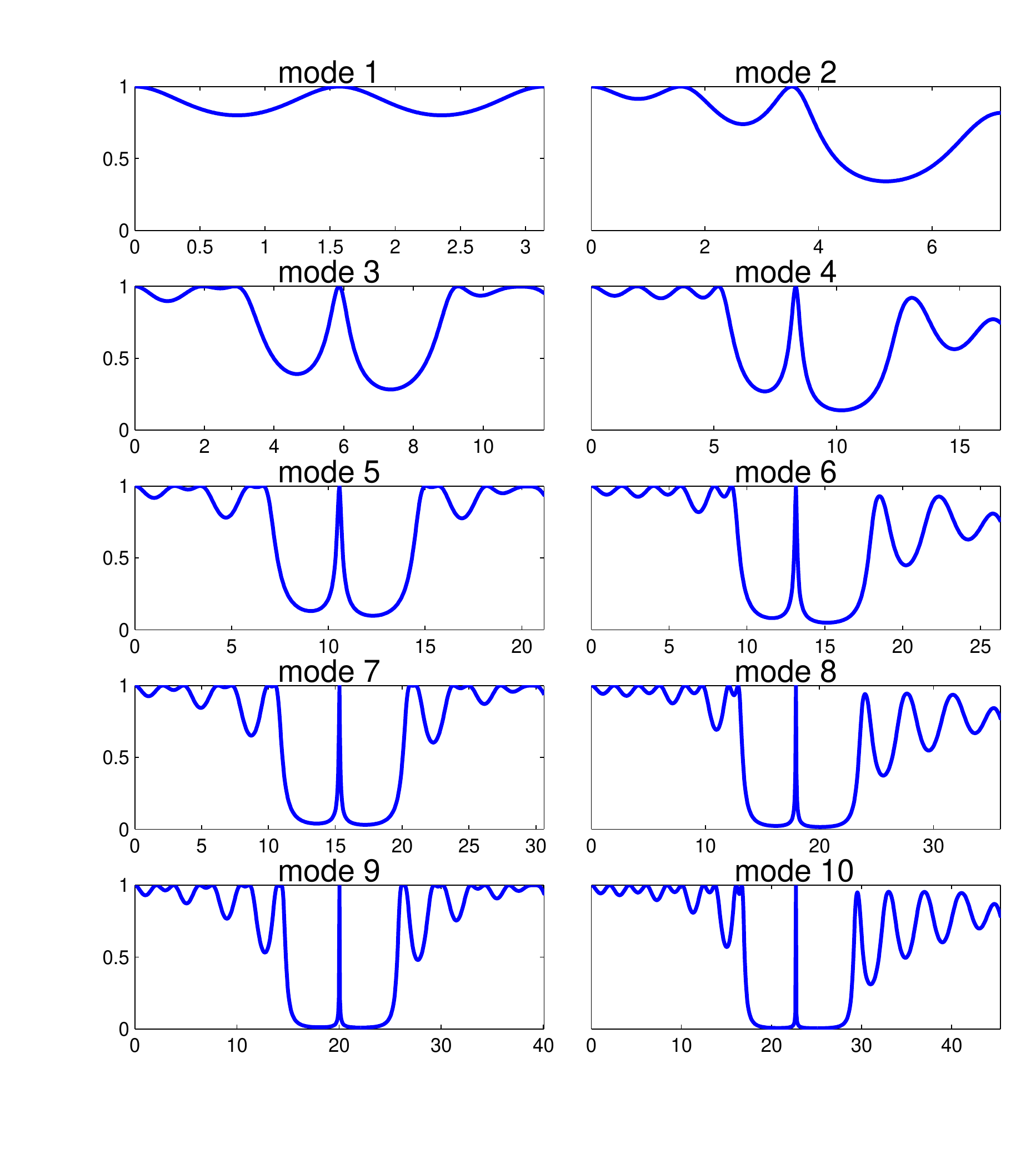}
\vspace{-.8cm}
\caption{The transmission coefficients $|t^j(\omega)|$ corresponding to the refractive indices $n_\star^j(x)$ minimizing Eq. \eqref{eqn:optOneRes2} for $j=1,\ldots,10$. The range of the $\omega$-axis is taken to be  $[0, \, 2 \Re \omega^j_\star ]$ to demonstrate that in the center, $|t^j(\Re \omega^j_\star)| \approx 1$, while $|t^j(\omega)|$ near $\omega=\Re \omega^j_\star$ is very small as $j$ increases.}
\label{fig:transCoef}
\end{center}
\end{figure}

\begin{rem}
We observe the following:
\begin{enumerate}
\item For each $j$, the optimal $n_\star^j$ is symmetric, \ie, $n^j_\star\in\calA_{sym}\subset \calA$.
\item For each $j$,  $n^j_\star(0)=n^j_\star(L/2)=n^j_\star(L)=n_+$. 
\item For each $j$, the optimal $n^j_\star(x)$ is roughly  periodic with a defect  near $x=L/2$. The number of repeated blocks increases as $j$ increases. The width of the repeated structure approximately satisfies 
Bragg's relation as defined in Prop. \ref{prop:Bragg}. 
As $j$ increases, for even modes, the defect width tends to zero, while for odd modes, the defect width tends to approximately twice the Bragg width.  These observations will be made more precise in Sec. \ref{sec:1d-bragg}.

\item The values $| \Im \omega_\star^j|$ are monotonically decreasing in $j$, \ie, $|\Im \omega_\star^{j+1}| <  |\Im \omega_\star^{j}|$.
\item The values $ \Re \omega_\star^j$ are monotonically increasing in $j$, \ie, the optimal modes $u_\star(x)$ becomes increasingly oscillatory as $j\uparrow \infty$. 
\end{enumerate}
\end{rem}

\section{Characterization of locally  optimal $n(x)$ in dimension one / Bragg relation}
\label{sec:1d-bragg}

In this section, we fix $n_+>1$, $n_-=1$, $\rho$, and a domain $[0,L]$. We will use a combination of numerical observations from section \ref{sec:1dnum} and analysis to characterize one-dimensional refractive indices $n_\star \in \calA$ which are  
\begin{equation}
\tag{\ref{eqn:designProb}}
\textrm{local minima of } \ \  \Gamma^\rho[n]\ \ \textrm{subject to}\ \   n\in\calA .
\end{equation}

 In this section, we only consider states such that $|\Re \omega_\star | < \rho$, that is $\omega_\star$ is an interior point on the constraint set. Additionally, we take $\rho$ sufficiently large such that by Lemma \ref{cor:ReOmegaPositive}, $|\Re \omega_{\star}| > 0$. 

\paragraph{Even / oddness properties of $n_\star$ and $u_\star$} We begin the discussion with even/oddness properties of the optimal refractive indices $n_\star(x)$ satisfying Eq. \eqref{eqn:designProb} and the corresponding  modes $u_\star(x)$. The following conjecture is supported by numerical experiments (see Sec. \ref{sec:compRes}). 
\medskip

\begin{conj}
\label{conj:symN}
There exists $n_\star \in \calA_{sym}$, as defined in \eqref{A3-admiss}, such that $\Gamma^\rho[n_\star] = \Gamma_\star^\rho(\calA)$. That is, the refractive index obtained by minimizing $\Gamma^\rho$ over the set $\calA$ can be taken to be  symmetric.
\end{conj}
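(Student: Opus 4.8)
The natural strategy is symmetrization: given \emph{any} near-optimal structure $n \in \calA([0,L],n_-,n_+)$, produce a symmetric structure $\tilde n \in \calA_{sym}$ whose optimal resonance width is no larger. The cleanest route exploits the reflection symmetry of the domain $[0,L]$ directly at the level of the scattering resonance problem \eqref{1dSRP}. Define $n^R(x) := n(L-x)$. Because \eqref{1dSRP} is invariant under $x \mapsto L-x$ (the boundary conditions \eqref{1dSRPb} are symmetric under this reflection), we have $\Res_{n^R} = \Res_n$, and in particular $\Gamma^\rho[n^R] = \Gamma^\rho[n]$; if $(\omega_\star,u_\star)$ is the optimal pair for $n$, then $(\omega_\star, u_\star(L-\cdot))$ is an optimal pair for $n^R$. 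So $n$ and $n^R$ are ``equally good.'' The hoped-for conclusion is then that some interpolant between $n$ and $n^R$ which \emph{is} symmetric does at least as well.

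The plan I would carry out: (i) Reduce to studying local minimizers. By Theorem \ref{thm:exist} a global minimizer $n_\star$ exists; by Proposition \ref{prop:bangbang} (assuming nondegeneracy of $\omega_\star$ and $|\Re\omega_\star|<\rho$, which hold under the running hypotheses of this section) it is bang-bang, $n_\star(x)\in\{n_-,n_+\}$ a.e. (ii) Consider the optimal mode $u_\star$ and its reflection. Since $\omega_\star$ is nondegenerate, the resonance eigenspace of $n_\star^R$ at $\omega_\star$ is one-dimensional and spanned by $u_\star(L-\cdot)$; but if one could also argue $\omega_\star \in \Res_{n_\star}$ with the \emph{same} eigenfunction up to scaling, i.e. $u_\star(L-x) = c\, u_\star(x)$, then $|u_\star|$ is symmetric, and via Proposition \ref{corr:nChar} the characterization $n_\star(x) = n_\pm$ according to the sign of $\Im(\alpha\omega_\star^2 u_\star^2(x))$ would force $n_\star$ itself to be symmetric — giving the conjecture with $n_\star = n_\star^R$. (iii) The gap is that a global minimizer need not be unique, so $n_\star$ and $n_\star^R$ could be genuinely distinct optimizers. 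To close this, I would attempt a ``folding'' construction: restrict attention to $[0,L/2]$, impose a Neumann or Dirichlet condition at $x=L/2$ depending on parity (even modes give $u_x(L/2)=0$, odd modes give $u(L/2)=0$), solve the half-interval optimization, and reflect; one must check that the reflected structure's resonance on $[0,L]$ has width equal to the half-problem's and that optimizing the half-problem cannot do worse than the full problem — the latter because any symmetric competitor on $[0,L]$ restricts to a competitor on $[0,L/2]$, and conversely. The monotonicity/phase information in Proposition \ref{prop:argu} and the interval-length bounds of Proposition \ref{prop:intLengthBnd} (both cited later in the paper) would be the tools to rule out the even/odd dichotomy causing a loss.

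The main obstacle is precisely step (iii): \textbf{nonuniqueness of the global optimizer combined with the lack of a convexity or rearrangement inequality for the nonlinear, non-selfadjoint functional $\Gamma^\rho[n]$.} Unlike self-adjoint eigenvalue optimization, there is no Rayleigh quotient to symmetrize and no Schwarz-symmetrization monotonicity to invoke; the resonance width \eqref{eqn:tau} is a ratio of boundary values to a weighted $L^2$ norm, and reflecting $n$ does not obviously interact well with convex combinations $\tfrac12(n+n^R)$ (which in any case would violate the bang-bang structure). This is exactly why the statement is posed as a \emph{conjecture} rather than a theorem: the numerical evidence of Section \ref{sec:compRes} (every computed $n_\star^j$ is symmetric) is compelling, and the parity-folding heuristic is suggestive, but a rigorous argument would require either a uniqueness result for the optimal structure in each parity class or a genuinely new symmetrization principle adapted to outgoing (complex-frequency) boundary value problems. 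I would present the folding reduction as the most promising partial progress and flag the uniqueness question as the crux.
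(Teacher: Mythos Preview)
The paper does not prove this statement: it is explicitly labeled a \emph{conjecture}, and the only support offered is the numerical evidence of Section~\ref{sec:compRes} (the observation that every computed local optimizer $n_\star^j$ turned out to be symmetric). Immediately after stating the conjecture the authors simply \emph{assume} it by restricting subsequent analysis to $\calA_{sym}$. So there is no ``paper's own proof'' to compare against, and you are right to flag in your final paragraph that the obstacles you identify are precisely why this remains open.

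On the substance of your sketch: the reflection observation $\Res_{n^R}=\Res_n$ is correct and is the natural starting point. However, step (ii) as written is circular. The relation $u_\star(L-x)=c\,u_\star(x)$ would, via the ODE $u''+\omega_\star^2 n_\star^2 u=0$, force $n_\star(L-x)=n_\star(x)$ wherever $u_\star$ is nonzero (which by Lemma~\ref{rem:nonVanish} is everywhere except possibly $L/2$); so you would be assuming the conclusion. Nondegeneracy of $\omega_\star$ for $n_\star$ tells you only that the eigenspace of $n_\star$ is one-dimensional, not that it coincides with the eigenspace of the \emph{different} operator associated to $n_\star^R$.

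Your step (iii) folding idea is the more promising direction, and it does correctly reduce $\Gamma_\star^\rho(\calA_{sym})$ to a pair of half-interval problems (outgoing at $0$, Dirichlet or Neumann at $L/2$). But the reduction goes the wrong way for the conjecture: it characterizes the symmetric optimum, not the full optimum. To close the gap you would still need, for an arbitrary (asymmetric) $n\in\calA$, a symmetric competitor $\tilde n$ with $\Gamma^\rho[\tilde n]\le\Gamma^\rho[n]$, and neither restriction-to-a-half nor convex combination $\tfrac12(n+n^R)$ supplies one. Your diagnosis that a genuinely new symmetrization principle for outgoing, non-selfadjoint problems is needed is accurate and matches the paper's implicit stance.
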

\medskip

\noindent Motivated by this conjecture, in what follows we restrict consider optimization over the constraint set, $\calA_{sym}$, of symmetric structures.
 \medskip
 
\begin{prop}
\label{prop:uSym}
If $d=1$ and $n\in \calA_{sym}$, then any scattering resonance mode satisfying the Helmholtz equation \eqref{helm} is either even or odd with respect to $x=L/2$. 
\end{prop}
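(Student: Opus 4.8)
The plan is to exploit the fact that the Helmholtz equation \eqref{1dSRPa} is invariant under the reflection $x \mapsto L - x$ when $n \in \calA_{sym}$, and that the outgoing boundary conditions \eqref{1dSRPb} are symmetric under this same reflection. Concretely, given a scattering resonance pair $(\omega, u(x;\omega))$ for the symmetric structure $n$, I would introduce the reflected function $\tilde u(x) := u(L - x;\omega)$. Since $n(L-x) = n(x)$, the function $\tilde u$ satisfies $\partial_x^2 \tilde u + \omega^2 n^2(x) \tilde u = 0$ on $[0,L]$, and a direct check of the boundary conditions shows $\partial_x \tilde u(0) = -\partial_x u(L;\omega) = -i\omega u(L;\omega) = -i\omega \tilde u(0)$ and similarly $\partial_x \tilde u(L) = i\omega \tilde u(L)$. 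Hence $\tilde u$ is again a solution of the same boundary value problem \eqref{1dSRP} with the same $\omega$.

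The key step is then a dimension count: the space of solutions of the second-order ODE \eqref{1dSRPa} satisfying the single boundary condition $\partial_x u(0) = -i\omega u(0)$ is one-dimensional (it is determined up to scalar by, say, $u(0) = 1$), and the resonance condition is precisely that this one-dimensional family also satisfies the boundary condition at $x = L$. Therefore the eigenspace of scattering resonance modes for a given $\omega$ is at most one-dimensional, so $\tilde u$ must be a scalar multiple of $u$: $\tilde u = c\, u$ for some $c \in \mathbb{C}\setminus\{0\}$. Applying the reflection twice gives $u(x) = c^2 u(x)$, hence $c^2 = 1$ and $c = \pm 1$. The case $c = +1$ gives $u(L-x) = u(x)$, i.e. $u$ is even about $x = L/2$; the case $c = -1$ gives $u(L-x) = -u(x)$, i.e. $u$ is odd about $x = L/2$.

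The one subtlety worth noting is that for higher-dimensional problems scattering resonances can be degenerate (as the spherically symmetric examples in Appendix \ref{sec:simple-example} show), so one might worry about multiplicity here as well. But in one dimension the argument of the previous paragraph shows the eigenspace is genuinely one-dimensional, so no degeneracy obstruction arises. The main (minor) obstacle is simply to verify carefully that the reflected function solves the \emph{same} boundary value problem — in particular that the outgoing conditions are exchanged correctly under $x\mapsto L-x$ rather than turned into incoming conditions — which is immediate from the symmetric form of \eqref{1dSRPb}. If one prefers a cleaner statement, one can instead decompose any solution $u$ into its even and odd parts $u_{\pm}(x) = \tfrac12\bigl(u(x) \pm u(L-x)\bigr)$, observe each $u_{\pm}$ solves \eqref{1dSRP} with the same $\omega$, and conclude from one-dimensionality of the solution space that one of $u_+, u_-$ must vanish identically.
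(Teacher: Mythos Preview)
Your proof is correct and follows essentially the same approach as the paper: both exploit the reflection symmetry $x\mapsto L-x$ and the one-dimensionality of the resonance eigenspace in one dimension. The only cosmetic difference is that the paper forms the even and odd parts $u_1=u(x)+u(L-x)$, $u_2=u(x)-u(L-x)$ and shows their Wronskian vanishes (using the outgoing boundary conditions) to conclude linear dependence, whereas you argue the dimension count directly from the single boundary condition at $x=0$; your alternative at the end is exactly the paper's decomposition.
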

 \medskip

\begin{proof}
Let $(u(x),\omega)$ be a scattering resonance pair satisfying Eq. \eqref{1dSRP}. Since $n(L-x) = n(x)$, the following are also solutions:
\begin{align*}
u_1(x) &= u(x) + u(L-x),  \ \  \
u_2(x) = u(x) - u(L-x). 
\end{align*}
The Wronskian 
$
{\cal W}(u_1,u_2) \equiv u_1 u_2' - u_1' u_2,
$
 is constant in $x$. The outgoing boundary condition implies  $\mathcal{W}(x)\equiv 0$.  
Thus $u_1$ and $u_2$ are linearly dependent,  which is possible only if either $u_1$ or $u_2$ vanishes identically. Thus, $u(x)$ is either even or odd.
\end{proof}

From the numerical experiments in Sec. \ref{sec:compRes}, we observe that  modes $u_\star$ corresponding to locally optimal $n_\star$ may be either even or odd (see Fig. \ref{fig:OptModes}). It follows that the modes $u_\star$ corresponding to locally optimal solutions of \eqref{eqn:designProb} will be either even or odd depending on the choice of $\rho$, L, and $n_+$.  

\medskip

\begin{lem}
\label{rem:nonVanish}
Let $n(x)\in \calA_{sym}$, \ie\ $n(x)$ is symmetric about $L/2$. Then, if  $u(x)$ is a solution of the scattering resonance eigenvalue problem \eqref{1dSRP}, 
then neither  $u(x)$ nor  $u'(x)$ may vanish  for any $x \neq L/2$. 
\end{lem}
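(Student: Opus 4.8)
The plan is to argue by contradiction using the uniqueness of solutions to the second-order ODE \eqref{1dSRPa} together with the symmetry/parity structure established in Proposition~\ref{prop:uSym}. Suppose $u(x)$ solves \eqref{1dSRP} and, without loss of generality (replacing $u$ by its even or odd part and using Prop.~\ref{prop:uSym}), assume $u$ is either even or odd about $x=L/2$. Suppose, for contradiction, that $u(x_0)=0$ for some $x_0\neq L/2$; the case $u'(x_0)=0$ will be handled analogously at the end.

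First I would exploit parity: if $u$ is even (resp. odd) about $L/2$, then $u(L-x_0)=\pm u(x_0)=0$ as well, so $u$ vanishes at the two distinct points $x_0$ and $L-x_0$ (if $x_0\neq L/2$ these are genuinely distinct, and moreover both lie in $[0,L]$ since the outgoing boundary condition \eqref{1dSRPb} forces the endpoints to be treated consistently; note also $u(0)\neq 0$ and $u(L)\neq0$ by the normalization/nonvanishing in \eqref{eqn:tau}, since $|\Im\omega|>0$ requires $|u(0)|^2+|u(L)|^2>0$ and symmetry then gives both nonzero). The key point: a nontrivial solution of the linear homogeneous second-order ODE $u'' + \omega^2 n^2 u = 0$ cannot vanish twice unless \dots\ actually it can vanish many times if $\omega^2 n^2$ is positive and large, so the contradiction must come from the \emph{boundary conditions}, not merely from ODE uniqueness. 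The correct mechanism is the Wronskian argument used in Prop.~\ref{prop:uSym}: consider the two solutions $u(x)$ and $\widetilde u(x) := u(L-x)$. Their Wronskian $\mathcal W(u,\widetilde u)$ is constant in $x$, and evaluating it using the outgoing boundary conditions \eqref{1dSRPb} at $x=0$ shows $\mathcal W \equiv 0$, so $u$ and $\widetilde u$ are proportional — this is exactly the content of Prop.~\ref{prop:uSym}, so $u$ is genuinely even or odd. Then I would instead run the Wronskian of $u$ with the solution $v$ determined by the \emph{initial} data $v(x_0)=0$, $v'(x_0)=1$: since $u(x_0)=0$, $\mathcal W(u,v)(x_0) = -u'(x_0)$. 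If also $u'(x_0)=0$ then $u\equiv0$ by ODE uniqueness, contradiction; so $u'(x_0)\neq0$, and $u$ is (up to scalar) the solution with $u(x_0)=0$. Now propagate to the endpoint: the outgoing conditions $u'(0)=-i\omega u(0)$, $u'(L)=i\omega u(L)$ are each \emph{complex} (one real constraint on the ratio $u'/u$ at each end), and for a solution that already vanishes at an interior point I would track the argument of $u'/u$ — here I can invoke Proposition~\ref{prop:argu}, the monotonicity formula for the phase of the resonance mode referenced in the introduction, to show the boundary phase conditions at $0$ and $L$ are incompatible with an interior zero given the established parity; alternatively, a direct computation: an even solution vanishing at $x_0$ is forced (by evenness about $L/2$) to be an odd function about $x_0$ on a symmetric sub-interval, which combined with the endpoint condition forces $\Re\omega$ or $\Im\omega$ to take a value excluded by the resonance-free region of Prop.~\ref{prop:exclusionRect} or by $|\Im\omega|>0$.

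The cleanest route, which I would actually write up, is: parity (Prop.~\ref{prop:uSym}) $\Rightarrow$ $u$ even or odd about $L/2$; an odd mode has $u(L/2)=0$ and $u'(L/2)\neq0$ (else $u\equiv0$), an even mode has $u'(L/2)=0$ and $u(L/2)\neq0$; in either case the \emph{half-line} problem on $[L/2,L]$ with the appropriate Neumann or Dirichlet condition at $L/2$ and the outgoing condition at $L$ is a \emph{regular} problem whose solution, when restricted further, behaves like a single decaying/oscillating exponential piece in the homogeneous region $(\text{last transition},L]$ where $n\equiv1$: there $u(x)=c\,e^{i\omega(x-L)}$ exactly (from $u'(L)=i\omega u(L)$), which is nowhere zero and has nowhere-zero derivative. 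Then I would extend this non-vanishing backwards through each constant-$n$ slab: on each slab $u$ is a combination $A\cos(\omega n(x-a)) + B\sin(\omega n(x-a))$ with $(A,B)\neq(0,0)$ matched $C^1$ to the adjacent slab, and the monotonicity of the phase (Prop.~\ref{prop:argu}) guarantees $u$ and $u'$ cannot \emph{both} be driven to zero, nor can $u$ alone vanish at a point where the accumulated phase would contradict the terminal condition at $x=0$, $u'(0)=-i\omega u(0)$, which (for $\Im\omega<0$) pins the incoming phase. The contradiction is that a vanishing of $u$ or $u'$ at $x\neq L/2$ produces, via the reflection symmetry, a vanishing also at $L-x$, and the two-point data plus the two outgoing conditions overdetermine the ODE unless $u\equiv0$.

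The main obstacle I anticipate is making the ``overdetermination'' precise without circularity: the ODE \eqref{1dSRPa} is second order, so generically two conditions (the two outgoing BCs) already select $\omega$ discretely, and I must show that a \emph{third} condition ($u(x_0)=0$ with $x_0\neq L/2$, which by symmetry is really the pair $u(x_0)=u(L-x_0)=0$) is genuinely extra — i.e. that it forces either $u\equiv0$ or a value of $\omega$ ruled out by $\Im\omega<0$ / Prop.~\ref{prop:exclusionRect}. I would handle this by the Wronskian bookkeeping above: with $u$ even or odd, $u(x_0)=0$ and $u(L-x_0)=0$ together with $C^1$-matching across all slabs forces the solution on $[x_0, L-x_0]$ to be the \emph{odd-about-midpoint} solution of a Dirichlet problem, hence its value and derivative at the endpoints $x_0$, $L-x_0$ are rigidly linked; pushing through the outer slabs to $x=0,L$ and imposing $u'(0)=-i\omega u(0)$ then yields a real equation forcing $\Im(\omega^2)=0$ or $\Re(\omega^2)\le 0$ with equality constraints that, via Prop.~\ref{prop:epsilon}'s identity \eqref{eqn:tau} (which gives $|\Im\omega|>0$ whenever $u(0),u(L)\neq0$, already guaranteed), are contradictory. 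I expect the parity dichotomy and the Wronskian-constancy step to be routine given Prop.~\ref{prop:uSym}; the delicate part is the slab-by-slab phase tracking, for which invoking Prop.~\ref{prop:argu} should do the work cleanly rather than an explicit transfer-matrix computation.
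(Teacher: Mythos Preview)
Your proposal has a genuine circularity problem: you invoke Proposition~\ref{prop:argu} (monotonicity of phase) as the workhorse, but the paper's proof of Prop.~\ref{prop:argu} begins with ``By Lemma~\ref{rem:nonVanish}, $u(x)$ does not vanish except possibly at $x=L/2$,'' i.e.\ it \emph{uses} the very lemma you are trying to prove. So the phase-tracking route, as well as the slab-by-slab argument you sketch that leans on it, cannot be used here.

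Beyond that, the argument wanders through several mechanisms (Wronskians, transfer through slabs, Prop.~\ref{prop:exclusionRect}) without landing a clean contradiction. You come closest in the last paragraph when you mention that a zero at $x_0$ together with its reflected zero at $L-x_0$ sets up a Dirichlet problem on $[x_0,L-x_0]$, but you then try to push through the outer slabs to the endpoints, which is unnecessary and where the argument stalls. The paper's proof is much more direct and uses only Prop.~\ref{prop:uSym}: parity gives $u(L/2)=0$ or $u'(L/2)=0$; if additionally $u(\xi)=0$ or $u'(\xi)=0$ for some $\xi\neq L/2$, then on the interval $[\xi,L/2]$ the function $u$ satisfies $-u''=\omega^2 n^2 u$ with Dirichlet or Neumann conditions at \emph{both} endpoints. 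Multiplying by $\overline{u}$ and integrating by parts (the boundary terms all vanish) yields
\[
\int_\xi^{L/2}|u'|^2 \,dx \;=\; \omega^2 \int_\xi^{L/2} n^2|u|^2\,dx,
\]
which forces $\omega^2>0$, hence $\omega\in\mathbb{R}$, contradicting $\Im\omega<0$. No phase monotonicity, no slab analysis, no endpoint boundary conditions are needed.
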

\begin{proof} Let $(u(x;\omega),\omega)$ denote a scattering resonance pair for \eqref{1dSRP} with 
$n(x)\in \calA_{sym}$. By Proposition \ref{prop:uSym}, we have $u(L/2;\omega)=0$ or $u'(L/2;\omega)=0$. Suppose there is a  
 $\xi\in \mathbb R\setminus\{L/2\}$ such that either $u(\xi;\omega)=0$ or $u'(\xi)=0$. Assume $\xi<L/2$. If $\xi>L/2$, the proof is similar.  Then, $u(x;\omega)$ is a solution of the eigenvalue problem value problem:
 \begin{align}
 &-\partial_x^2 u(x;\omega) =   \omega^2 n^2(x) u(x;\omega),\ \ \xi<x<L/2\label{sa-evp}\\
&\qquad \textrm{with one of the pairs of boundary conditions: } \nonumber\\
 & u(\xi)=0,\ \ u'(L/2)=0\ \ \ \textrm{or}\ \ u'(\xi)=0, \ \ u'(L/2)=0 \ \ \ \textrm{or} \nonumber \\
 & u(\xi)=0,\ \ u(L/2)=0\ \ \ \  \textrm{or}\ \ u'(\xi)=0, \ \ u(L/2)=0  \nonumber 
 \end{align}
 Multiplication of \eqref{sa-evp} by $\overline{u(x;\omega)}$, integration over the interval $[\xi,L/2]$, integration by parts and using the boundary conditions
  yields:
  \begin{equation}   \int_\xi^{L/2}|\partial_xu(x;\omega)|^2dx=
  \omega^2\ \int_\xi^{L/2}n^2(x)|u(x;\omega)|^2dx
 \label{integ-id} \end{equation}
 It follows that $\omega^2$ is real and positive. Therefore $\omega$ is real. However, Theorem \ref{thm:exist} precludes the existence of a real scattering resonance energy. Thus we have a contradiction and Lemma \ref{rem:nonVanish} is proved.
\end{proof}

\medskip

The following proposition describes the change in argument of a solution $u(x)$ to 
the Helmholtz equation \eqref{helm} with arbitrary symmetric index, $n(x)$. 
\medskip

\begin{prop}[Monotonicity of phase]
\label{prop:argu}
Let $\left(\omega, u(\cdot,\omega)\right)$ denote a scattering resonance  pair satisfying the Helmholtz equation \eqref{helm} with $n\in \calA_{sym}$, \ie, symmetric about $x=L/2$. Then
\begin{equation}
\label{eqn:symArg}
\frac{\ud }{\ud x} \arg u(x) =  2 (\Re \omega) |\Im \omega| |u(x)|^{-2} \int_{L/2}^x n^2(z) |u(z)|^2 \ud z. 
\end{equation}
If $\Re \omega >0$, the corresponding resonance state has increasing argument for $x>L/2$ and decreasing argument for $x<L/2$. The opposite statement holds true for the resonance state corresponding to $-\overline{\omega}$.
In addition, for $x\in [0,L]$, 
\begin{equation}
\label{eqn:argBnd}
\left| \frac{d}{dx} \arg u(x) \right| \leq  C_{\text{Lip}}.
\end{equation}
\end{prop}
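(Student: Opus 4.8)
The plan is to derive \eqref{eqn:symArg} from a Wronskian-type identity and the boundary condition at $x=L/2$, and then obtain the Lipschitz bound \eqref{eqn:argBnd} by combining \eqref{eqn:symArg} with the pointwise lower bound on $|u|$ furnished by Lemma \ref{rem:nonVanish} together with the variational identity \eqref{eqn:tau}. First I would write $u = |u| e^{i\arg u}$, so that $\frac{d}{dx}\arg u = \Im\!\left(\frac{u'}{u}\right) = \frac{\Im(\overline{u}\,u')}{|u|^2}$. Setting $W(x) := \Im(\overline{u(x)}\,u'(x))$, a direct computation using the Helmholtz equation \eqref{1dSRPa}, $u'' = -\omega^2 n^2 u$, gives $W'(x) = \Im(\overline{u}\,u'') = \Im(-\omega^2 n^2 |u|^2) = -\Im(\omega^2)\, n^2(x)\,|u(x)|^2$. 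By Proposition \ref{prop:uSym}, either $u(L/2)=0$ or $u'(L/2)=0$; in either case $W(L/2)=0$. Integrating from $L/2$ to $x$ yields $W(x) = -\Im(\omega^2)\int_{L/2}^x n^2(z)|u(z)|^2\,dz$. Using $\Im(\omega^2) = 2\,\Re\omega\,\Im\omega = -2\,\Re\omega\,|\Im\omega|$ (valid since $\Im\omega<0$ by Theorem \ref{thm:exist}), we get $W(x) = 2(\Re\omega)|\Im\omega|\int_{L/2}^x n^2(z)|u(z)|^2\,dz$, and dividing by $|u(x)|^2$ gives exactly \eqref{eqn:symArg}. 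The monotonicity claims follow by inspecting the sign of the right-hand side: for $\Re\omega>0$ and $x>L/2$ the integral is positive, so $\arg u$ is increasing there, and symmetrically it is decreasing for $x<L/2$; replacing $\omega$ by $-\overline\omega$ flips the sign of $\Re\omega$ while leaving $|\Im\omega|$ unchanged, reversing the conclusion.

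For the uniform bound \eqref{eqn:argBnd}, the issue is that the factor $|u(x)|^{-2}$ in \eqref{eqn:symArg} could blow up where $u$ is small. However, by Lemma \ref{rem:nonVanish}, a nontrivial resonance mode with symmetric $n$ cannot vanish at any $x\neq L/2$, and if it vanishes at $L/2$ then near $x=L/2$ the numerator $\int_{L/2}^x n^2|u|^2$ vanishes to higher order (like $|x-L/2|^3$, since $|u(z)|^2 \sim |z-L/2|^2$ there) than the denominator $|u(x)|^2 \sim |x-L/2|^2$, so the ratio stays bounded — in fact tends to $0$ — as $x\to L/2$. Away from $L/2$, $|u|$ is bounded below by a positive constant on the compact interval $[0,L]$ by continuity and non-vanishing, so the right-hand side of \eqref{eqn:symArg} is bounded by $2|\Re\omega|\,|\Im\omega|\,n_+^2 L \cdot \big(\min_{[0,L]}|u|^2\big)^{-1}$. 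Normalizing, say, so that $\int_0^L n^2|u|^2 = 1$ (equivalently using \eqref{eqn:tau} to write $|\Im\omega| = \tfrac12(|u(0)|^2+|u(L)|^2)$) and tracking the dependence on $n_+, L, \rho$ makes the constant $C_{\mathrm{Lip}}$ explicit. I expect the main obstacle to be handling the case $u(L/2)=0$ cleanly: one must show the cubic-over-quadratic cancellation at $x=L/2$ rigorously, which I would do either via a Taylor expansion of $u$ about $L/2$ (using $u(L/2)=0$, $u'(L/2)\neq0$ by Lemma \ref{rem:nonVanish}) or by an application of L'Hôpital's rule to $\lim_{x\to L/2} |u(x)|^{-2}\int_{L/2}^x n^2|u|^2$, differentiating numerator and denominator twice. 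The odd/even dichotomy from Proposition \ref{prop:uSym} is exactly what reduces this to two manageable subcases.
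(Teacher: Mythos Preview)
Your proposal is correct and follows essentially the same route as the paper: the paper uses the Ricatti substitution $y=u'/u$ and an integrating factor $|u|^2$, which is exactly your Wronskian identity $W=\Im(\bar u\,u')=|u|^2\Im(u'/u)$ written in slightly different notation, and then integrates from $L/2$ using the even/odd dichotomy of Proposition~\ref{prop:uSym} to fix the constant of integration. For the Lipschitz bound the paper likewise splits into the even case (nonvanishing of $u$ plus a bound on $\int n^2|u|^2$) and the odd case, where it invokes L'H\^opital's rule to show $\lim_{x\to L/2}|u(x)|^{-2}\int_{L/2}^x n^2|u|^2=0$, precisely the cubic-over-quadratic cancellation you describe.
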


\begin{proof}
A similar proof may be found in \cite{II:1986uq}. By Lemma \ref{rem:nonVanish}, $u(x)$ does not vanish except possibly at $x=L/2$. Thus on the interval $[L/2+\epsilon,\infty)$ for small $\epsilon>0$, we make the Ricatti transformation  $y = \frac{u'}{u}$ and note that $\Im y = \left( \Im (\log u) \right)' = (\arg u)' $. We derive a differential equation for $\Im y$ and  show that the solution has the desired properties. Using Eq. \eqref{1dSRPa} we obtain
$y' = - \omega^2 n^2 - y^2$, which has imaginary part:
$$
\left( \Im y \right)' = 2(\Re \omega)  |\Im \omega|  n^2 - 2  (\Re y)  (\Im y),
$$
where we use that $\Im\omega=-|\Im\omega|$ since scattering resonances lie in the lower half plane.
Using the integrating factor $\exp( \log |u|^2 ) = |u|^2$, we obtain the differential equation
$
\frac{\ud}{\ud x} \left[ |u|^2  \Im y \right] = 2(\Re \omega) |\Im \omega| n^2 |u|^2. 
$
Integrating from $\frac{L}{2} + \epsilon$ to arbitrary $x$ we obtain
\begin{align*}
\frac{\ud }{\ud x} \arg u(x) 
=  |u(x)|^{-2} \left( |u(\frac{L}{2} + \epsilon)|^2 \Im y(\frac{L}{2} + \epsilon) + 2 (\Re \omega) |\Im \omega| \int_{\frac{L}{2}+\epsilon}^x n^2(z) |u(z)|^2 \ud z \right)
\end{align*}
Taking the limit as $\epsilon\downarrow 0$, we find that  $|u(\frac{L}{2} + \epsilon)|^2 \Im y(\frac{L}{2} + \epsilon) \rightarrow 0$ if $u(x)$ is either even or odd giving \eqref{eqn:symArg} for $x \in[L/2, L]$. The proof is similar for $x\in(-\infty,L/2]$.  

If $u(x)$ is even about $x=L/2$, Eq. \eqref{eqn:argBnd} follows from Eq. \eqref{eqn:symArg}, Lemma \ref{lem:intn2u2}, and Lemma \ref{rem:nonVanish} ($u$ doesn't vanish). 
If $u(x)$ is odd about $x=L/2$, then $|u(L/2)|=0$ and one may use  L'H\^opital's rule to show that 
$$
\lim_{x\rightarrow L/2} |u(x)|^{-2} \int_{L/2}^x n^2(z) |u(z)|^2 \ud z = 0.
$$
\end{proof}

Proposition \ref{prop:argu} can be used to obtain a significantly more detailed characterization
 of one-dimensional local optima, $n_\star(x)$, than provided in the general results of  Prop. \ref{prop:bangbang} and Prop. \ref{corr:nChar}.   In particular, we have
\medskip

\begin{thm}
\label{prop:stepFun}
In spatial dimension one, the optimal refractive index, $n_\star(x) \in {\cal A}_{sym}$, has a finite number of transition points between $n_{+}$ and $n_{-}$ and is therefore a step function. 
\end{thm}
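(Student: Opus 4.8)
The plan is to combine the bang-bang characterization of Proposition~\ref{corr:nChar} with the monotonicity-of-phase identity of Proposition~\ref{prop:argu}. Recall that by Proposition~\ref{corr:nChar}, on $\Omega=[0,L]$ the optimal index $n_\star(x)$ equals $n_+$ or $n_-$ according to the sign of $\Im\bigl(\alpha\omega_\star^2 u_\star^2(x)\bigr)$, so the transition points of $n_\star$ are exactly the zeros of the real-analytic-looking function $x\mapsto \Im\bigl(\alpha\omega_\star^2 u_\star^2(x)\bigr)$ on $[0,L]$. Writing $u_\star(x) = |u_\star(x)| e^{i\arg u_\star(x)}$ and $\alpha\omega_\star^2 = |\alpha\omega_\star^2| e^{i\phi_0}$ for a fixed constant phase $\phi_0$, we have
\begin{equation*}
\Im\bigl(\alpha\omega_\star^2 u_\star^2(x)\bigr) = |\alpha\omega_\star^2|\,|u_\star(x)|^2 \,\sin\bigl(\phi_0 + 2\arg u_\star(x)\bigr),
\end{equation*}
so (since $|u_\star(x)|>0$ for $x\neq L/2$ by Lemma~\ref{rem:nonVanish}) the transition points are precisely the points where $\phi_0 + 2\arg u_\star(x) \in \pi\mathbb{Z}$.

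First I would establish that $\arg u_\star(x)$ is a $C^1$, strictly monotone function of $x$ on each of the half-intervals $[0,L/2]$ and $[L/2,L]$ — this is exactly the content of Proposition~\ref{prop:argu}: for $\Re\omega_\star>0$ the argument is strictly increasing on $(L/2,L]$ and strictly decreasing on $[0,L/2)$ (the factor $\Re\omega_\star\,|\Im\omega_\star|$ is nonzero because $|\Re\omega_\star|>0$ by Lemma~\ref{cor:ReOmegaPositive} and $|\Im\omega_\star|>0$ by Theorem~\ref{thm:exist}, and the integral $\int_{L/2}^x n^2|u_\star|^2$ is strictly monotone in $x$). Next, because $\arg u_\star$ is continuous and strictly monotone on $[L/2,L]$, it takes each value at most once; hence the equation $\phi_0 + 2\arg u_\star(x) = k\pi$ has at most one solution in $[L/2,L]$ for each integer $k$. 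Then I would bound the total variation of $\arg u_\star$ over $[L/2,L]$: by the bound \eqref{eqn:argBnd}, $\bigl|\tfrac{d}{dx}\arg u_\star\bigr| \le C_{\mathrm{Lip}}$, so $\bigl|\arg u_\star(L) - \arg u_\star(L/2)\bigr| \le C_{\mathrm{Lip}}\,L/2 < \infty$. Consequently $\phi_0 + 2\arg u_\star(x)$ sweeps through an interval of finite length, so it crosses $\pi\mathbb{Z}$ only finitely many times on $[L/2,L]$ — at most $\lceil C_{\mathrm{Lip}}L/\pi\rceil + 1$ times. By the symmetry $n_\star(x)=n_\star(L-x)$ (equivalently, using that $u_\star$ is even or odd about $L/2$ by Proposition~\ref{prop:uSym}, so $|u_\star|$ and hence the sign pattern of $n_\star$ are symmetric), the same count bounds the number of transitions on $[0,L/2]$. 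Adding the two, plus possibly one transition at $x=L/2$ itself, gives a finite total number of transition points; hence $n_\star$ is a step function.

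The one genuinely delicate point — and the place I expect the main obstacle — is handling the odd case at $x=L/2$, where $u_\star(L/2)=0$. There $|u_\star(x)|^{-2}$ blows up, and one must check that $\Im(\alpha\omega_\star^2 u_\star^2(x))$ nonetheless behaves well near $L/2$: since $u_\star$ is odd and $C^1$ with $u_\star'(L/2)\neq 0$ (again Lemma~\ref{rem:nonVanish}), we have $u_\star(x)^2 = u_\star'(L/2)^2 (x-L/2)^2 + o((x-L/2)^2)$, so $\Im(\alpha\omega_\star^2 u_\star^2(x))$ has a well-defined sign in a punctured neighborhood of $L/2$, determined by $\Im(\alpha\omega_\star^2 u_\star'(L/2)^2)$; in particular $n_\star$ does not oscillate infinitely often near $L/2$, and the L'Hôpital computation at the end of the proof of Proposition~\ref{prop:argu} shows $\tfrac{d}{dx}\arg u_\star$ extends continuously (to $0$) at $L/2$, so the monotonicity and finite-total-variation arguments above go through on the closed half-intervals without modification. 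With this caveat dispatched, the counting argument is routine and the theorem follows.
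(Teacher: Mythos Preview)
Your argument is correct and in fact cleaner than the paper's. Both proofs rest on Proposition~\ref{corr:nChar} to identify transition points with the zero set of $\Im(\alpha\omega_\star^2 u_\star^2)$, and both invoke Proposition~\ref{prop:argu}. The difference is in how finiteness is extracted. You argue directly: since $\arg u_\star$ is strictly monotone on each half-interval and has total variation bounded by $C_{\mathrm{Lip}}L/2$ via \eqref{eqn:argBnd}, the phase $\phi_0+2\arg u_\star$ can cross $\pi\mathbb Z$ only finitely many times, and you even get an explicit bound on the number of transitions. The paper instead argues by contradiction: it supposes infinitely many transition points, extracts an accumulation point $X$, and splits into two cases. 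When $X=L/2$ the paper's reasoning is essentially yours (monotonicity of phase plus the Lipschitz bound). When $X\neq L/2$, however, the paper takes a different route: it passes to difference quotients to show that both $\alpha_\star\omega_\star^2 u_\star^2(X)$ and $\alpha_\star\omega_\star^2 u_\star(X)u_\star'(X)$ are real, deduces $u_\star'(X)/u_\star(X)\in\mathbb R$, and obtains a self-adjoint boundary value problem on $[X,L/2]$, contradicting Lemma~\ref{rem:nonVanish}. Your global phase-counting argument makes this second case unnecessary; what you gain is a shorter, constructive proof with a quantitative transition bound, while the paper's Case~1 argument is of independent interest in that it ties accumulation of transitions directly to self-adjointness without appealing to the Lipschitz estimate at all.
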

\medskip

\begin{proof} Our proof is based on the approach taken in \cite{Karabash:2011uq}. 
Suppose that the optimal refractive index, $n_\star(x)$, has an infinite number transition points, labeled $\{ x_{j}\}_{j=1}^{\infty}$, in the interval $[0,L]$. 
By Proposition \ref{corr:nChar}, at each transition point $x_{j}$, we have $\Im [\alpha_{\star}  \omega_\star^2 u_\star^2(x_{j}) ] = 0$,
 where $\alpha_{\star}$ is a nonzero constant depending on $u_{\star}(x)$ and $\omega_{\star}$ is in the open lower-half plane. 
Since the sequence $\{ x_{j}\}$ is bounded, there exists a convergent subsequence, relabeled $\{ x_{j}\}_{j=1}^{\infty}$, which converges to a point $X \in [0,L/2]$. We consider two cases.

\emph{Case 1:} 
Suppose $X\neq L/2$. 
By continuity of $u_\star(x)$,  the real sequence $\alpha_\star\omega_\star^2 u_\star^2(x_j)$ converges to  $\alpha_\star\omega_\star^2 u_\star^2(X)\in \mathbb R$. 
In addition, the real sequence of difference quotients
$\alpha_\star \omega_\star^2 [ u_\star^2(x_j) - u_\star^2(X) ] / (x_j-X)$ converges to 
$2\alpha_\star \omega_\star^2 u_\star(X)  u'_\star(X) \in \mathbb R$. 
Note that $u_\star(X)\neq 0$ (see Lemma \ref{rem:nonVanish}). 
Thus the ratio
$$
\frac{2 \alpha_\star \omega_\star^2 u_\star(X) u'_\star(X)}{\alpha_\star\omega_\star^2 u_\star^2(X)}
$$
 is a real number, say $2 \rho$, implying  $u'_{\star}(X)= \rho u_{\star}(X)$. Thus $u_{\star}(x)$ satisfies a self-adjoint boundary-value problem on the interval $[X,L/2]$, contradicting Lemma \ref{rem:nonVanish}. 

\emph{Case 2:} 
Suppose $X=L/2$. Thus $ \alpha_\star\omega_\star^2 u_\star^2(x_j)$ is a sequence of real numbers converging to $ \alpha_\star\omega_\star^2 u_\star^2(L/2)$. Equation \eqref{eqn:symArg} implies that $\arg u_{\star}(x)$ is monotonic on the interval $[0,L/2]$. Indeed for $\Re \omega_{\star} >0$, $\arg u_{\star}(x)$ is strictly monotonic. Thus 
$$
| \arg\left[  \alpha_{\star} \omega_{\star}^{2}u^{2}_{\star}(x_{j+1}) \right] - \arg \left[ \alpha_{\star} \omega_{\star}^{2} u^{2}_{\star}(x_{j})\right] | \geq 2 \pi.
$$
On the other hand, by Eq. \eqref{eqn:argBnd},  we have
$$
| \arg\left[  \alpha_{\star} \omega_{\star}^{2}u^{2}_{\star}(L/2) \right] - \arg \left[ \alpha_{\star} \omega_{\star}^{2} u^{2}_{\star}(x)\right] | \leq C_{\text{Lip}}  |L/2 - x|
$$
which is a contradiction.  
This completes the proof of Theorem \ref{prop:stepFun}.
\end{proof}
\medskip

Let an optimal refractive index, $n_\star(x)$, have $N+1$ discontinuity points, say $N+1$, which we denote by $\{ x_j \}_{j=0}^N$,  such that\footnote{Note: we have not ruled out the possibility that $x_0>0$ and $x_N<L$.} 
 \begin{equation}
 \label{eqn:defxj}
n_\star(x) = \begin{cases}
n_+ &\quad   x \in (x_j, x_{j+1}), \ \ j \text{ even} \\
1 &\quad  \text{otherwise.}
\end{cases}
\end{equation}
In Figure \ref{fig:OptSchem}, we illustrate the relationship between $n_\star(x)$ and the sign of 
 $\Im(\alpha\omega_\star^2u_\star^2(x))$ by plotting a locally optimal refractive index $n_\star(x)$ and the quantity $\Im [\alpha \omega_\star^2 u_\star^2(x) ]$. 
 
 \begin{figure}[t]
\begin{center}
\includegraphics[width=5in]{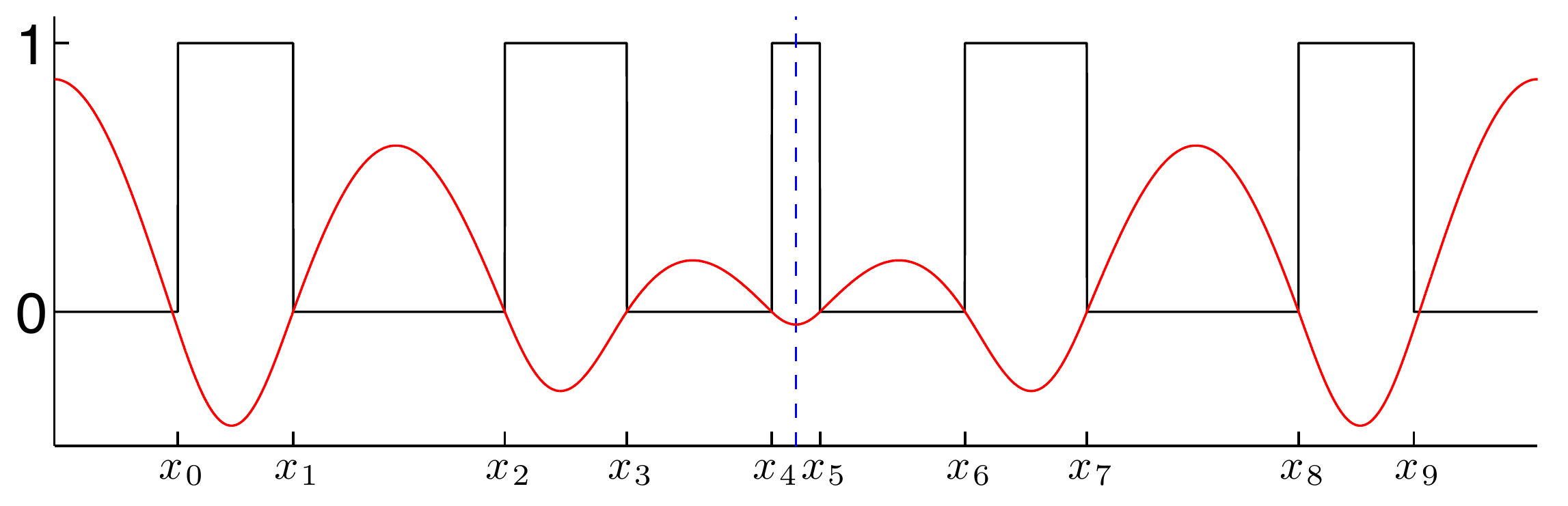}
\caption[The optimal refractive index and corresponding mode]
{A plot of an optimal $n_\star(x)-1$ in  black  and $\Im [\alpha \omega_\star^2 u_\star^2(x) ]$  in red. Note that $x_0=0$ and $\Im (\alpha \omega_\star^2 u_\star^2(0)) \neq 0$, but the optimality condition \eqref{eqn:nChar} is satisfied. This optimal structure and mode were obtained by solving Eq. \eqref{eqn:optOneRes2}  for $j=4$.  }
\label{fig:OptSchem}
\end{center}
\end{figure}

Since the energy of a mode concentrates where $n$ is large and we expect the energy to be concentrated in the center of the interval $\Omega = [0, L]$, we conjecture the following: 
\begin{conj}
\label{conj:centerN}
$n_\star(L/2) = n_+$.
\end{conj}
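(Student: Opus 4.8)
The plan is to reduce Conjecture \ref{conj:centerN} to a single sign condition on the optimality indicator of Proposition \ref{corr:nChar} at $x=L/2$, and then to attack that condition using the phase--monotonicity formula \eqref{eqn:symArg}. Put $h_\star(x):=\alpha_\star\,\omega_\star^{2}\,u_\star^{2}(x)$, so that by Propositions \ref{prop:bangbang} and \ref{corr:nChar} one has $n_\star=n_+$ on $\{\,\Im h_\star>0\,\}$ and $n_\star=n_-$ on $\{\,\Im h_\star<0\,\}$. Since $n_\star\in\calA_{sym}$, Proposition \ref{prop:uSym} makes $u_\star$ even or odd about $L/2$; let $c_\star$ be $u_\star(L/2)$ in the even case and $u_\star'(L/2)$ in the odd case, which is nonzero (else $u_\star$ would vanish together with its derivative at $L/2$ and be identically zero by uniqueness for \eqref{1dSRPa}). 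A Taylor expansion at $L/2$, using $u_\star''=-\omega_\star^{2}n^{2}u_\star$ and the vanishing of $u_\star'(L/2)$ (resp.\ $u_\star(L/2)$), gives $h_\star(x)=(\alpha_\star\omega_\star^{2}c_\star^{2})\bigl(1+O(|x-\tfrac L2|^{2})\bigr)$ in the even case and $h_\star(x)=(\alpha_\star\omega_\star^{2}c_\star^{2})(x-\tfrac L2)^{2}\bigl(1+O(|x-\tfrac L2|)\bigr)$ in the odd case. In both cases $\Im h_\star$ has a critical point at $L/2$, and the sign of $\Im h_\star$ on the partition interval containing $L/2$ equals $\operatorname{sign}\Im(\alpha_\star\omega_\star^{2}c_\star^{2})$. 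Hence Conjecture \ref{conj:centerN} is equivalent to the assertion that $\Im(\alpha_\star\omega_\star^{2}c_\star^{2})\ge 0$ (the borderline value $0$ being acceptable provided $\Re(\alpha_\star\omega_\star^{2}c_\star^{2})>0$, as the phase argument below then still yields $\Im h_\star>0$ on a punctured neighborhood of $L/2$).

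The ``free half'' of this comes from Proposition \ref{prop:argu}: since $\Re\omega_\star>0$ in the present setting, \eqref{eqn:symArg} shows $\arg u_\star$, hence $\arg h_\star$, has a \emph{strict} local minimum at $x=L/2$. Thus near $L/2$ the curve $h_\star$ stays on the counterclockwise side of the ray through $\alpha_\star\omega_\star^{2}c_\star^{2}$; this is exactly why $L/2$ is never a genuine transition point (unless $\alpha_\star\omega_\star^{2}c_\star^{2}$ is real) and why it suffices to locate $\arg(\alpha_\star\omega_\star^{2}c_\star^{2})$ in $[0,\pi)$ modulo $2\pi$, rather than to prove a symmetric two-sided estimate. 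It also identifies the first transition point to the right (resp.\ left) of $L/2$ with the first $x$ at which $\arg h_\star(x)$ reaches the next integer multiple of $\pi$.

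Determining $\arg(\alpha_\star\omega_\star^{2}c_\star^{2})$ modulo $2\pi$ is where I expect the real difficulty to lie. One would insert the variational identity \eqref{eqn:1dalpha}, $\alpha_\star^{-1}=2\omega_\star\!\int_0^L n^{2}u_\star^{2}+\imath[u_\star^{2}(0)+u_\star^{2}(L)]$, use the symmetry $u_\star^{2}(0)+u_\star^{2}(L)=2u_\star^{2}(0)$, and use Proposition \ref{prop:epsilon} ($\Im\omega_\star^{2}<0$, i.e.\ $\arg\omega_\star^{2}\in(-\pi,0)$) to reduce the goal to locating the argument of an explicit combination of $\omega_\star$, $u_\star(0)$, $c_\star$, and $W_\star:=\int_0^L n^{2}u_\star^{2}$. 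The obstruction is that $\arg W_\star$ is controlled by the range of $2\arg u_\star$ on $[0,L]$ only when that range has length below $\pi$, whereas the total phase advance $D:=\int_{L/2}^{L}(\arg u_\star)'\,\ud x$ grows with the number of Bragg periods --- hence with $L$ and with $n_+/n_-$ --- so one actually needs $D$ modulo $2\pi$. But ``$D\approx\pi$ per period'' is essentially the quarter--wave relation in Conjecture \ref{conj:Bragg}. I therefore expect a complete proof of Conjecture \ref{conj:centerN} to require first establishing a quantitative version of the near--periodic (quarter--wave stack) structure of $n_\star$, after which the modulo-$2\pi$ bookkeeping closes.

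Two softer alternatives are worth trying. First, a topological count: the winding of $h_\star$ over $[0,L]$ is constrained by the outgoing boundary conditions $u_\star'(0)=-\imath\omega_\star u_\star(0)$, $u_\star'(L)=\imath\omega_\star u_\star(L)$ and by the parity of $u_\star$, and one may hope this pins down the parity of the number of sign changes of $\Im h_\star$ on $[L/2,L]$, hence the sign of $\Im h_\star$ on the central interval, without resolving $D$ exactly; the main obstacle here is controlling the non-periodic boundary layers near $x=0$ and $x=L$. Second, a direct competitor argument: assuming $n_\star=n_-$ on a central interval $I\ni L/2$, one swaps $I$ for an $n_+$ block and tries to decrease $|\Im\omega|$ via \eqref{eqn:tau} and the concentration of $|u_\star|^{2}$ near $L/2$; the difficulty is that Proposition \ref{corr:nChar} is an equality (not a strict inequality) at $L/2$, so the first variation is consistent with either value there and one is forced into a genuinely nonlocal, second-order or rearrangement estimate.
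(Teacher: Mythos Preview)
The statement you are trying to prove is presented in the paper as a \emph{conjecture}, not a theorem: the authors offer no proof whatsoever, only the one--line heuristic ``the energy of a mode concentrates where $n$ is large and we expect the energy to be concentrated in the center of the interval.'' There is therefore no ``paper's proof'' against which to compare.

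Your reduction is sound and sharper than anything in the paper: setting $h_\star=\alpha_\star\omega_\star^{2}u_\star^{2}$ and expanding at $L/2$ does show that Conjecture~\ref{conj:centerN} is equivalent to the single scalar condition $\Im(\alpha_\star\omega_\star^{2}c_\star^{2})\ge 0$, and your observation via \eqref{eqn:symArg} that $\arg h_\star$ has a strict local minimum at $L/2$ correctly rules out $L/2$ being a transition point (unless $\alpha_\star\omega_\star^{2}c_\star^{2}\in\mathbb{R}$). But from that point on your proposal is, by your own description, a program rather than a proof: you do not determine the sign of $\Im(\alpha_\star\omega_\star^{2}c_\star^{2})$, and you explain accurately why the obvious attacks via \eqref{eqn:1dalpha} and \eqref{eqn:tau} stall on a modulo-$2\pi$ ambiguity in the total phase advance $D$. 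The two ``softer alternatives'' you float (a winding/parity count, and a competitor/rearrangement argument) are reasonable directions, but neither is carried out, and the obstacles you name for each are genuine.

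In short: your write-up is an honest and useful reformulation of an open question, not a proof, and the paper contains nothing to compare it to because the authors leave the statement open as well.
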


\noindent Conjectures \ref{conj:symN} and \ref{conj:centerN} imply that the number of intervals with $n=n_+$ is odd. Equivalently, the total number of intervals, $N=4M+1$ for $M\in \mathbb N$. For example, for the refractive index in Fig. \ref{fig:OptSchem}, $N=9$ and $M=2$. We refer to $[x_0,x_1]$ as the \emph{left-most interval}, $[x_{N-1},x_N]$ as the \emph{right-most interval}, $[x_{(N-1)/2}, x_{(N+1)/2}]$ as the \emph{center interval}. All other intervals are called \emph{interior intervals}. 

\begin{prop} 
\label{prop:intLengthBnd}
The length of an interior interval $I_j = [x_j, x_{j+1}]$ satisfies 
\begin{equation}
\label{eqn:intLengthBnd}
|x_{j+1} - x_j| \geq  \frac{\min_{x\in I_j} |u_\star(x)|^2} {|u_\star(0)|^2}  \ \  \frac{1}{4}  \frac{2\pi}{ |\Re \omega_\star|}. 
\end{equation}
\end{prop}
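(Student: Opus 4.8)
\textbf{Proof plan for Proposition \ref{prop:intLengthBnd}.}

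The plan is to bound below the length of an interior interval $I_j = [x_j, x_{j+1}]$ by using the monotonicity-of-phase formula, Proposition \ref{prop:argu}, together with the characterization of the transition points. Recall from Proposition \ref{corr:nChar} that at each transition point $x_j$ we have $\Im\left[\alpha_\star \omega_\star^2 u_\star^2(x_j)\right] = 0$; equivalently $\arg\left[\alpha_\star \omega_\star^2 u_\star^2(x)\right]$ takes a value in $\pi\mathbb{Z}$ at every $x_j$. Since $n_\star$ actually changes value at $x_j$ (these are genuine discontinuity points), consecutive transition points $x_j, x_{j+1}$ must correspond to \emph{consecutive} zeros of $\Im\left[\alpha_\star \omega_\star^2 u_\star^2(x)\right]$, so that $\arg\left[\alpha_\star \omega_\star^2 u_\star^2(x)\right]$ changes by exactly $\pi$ across $I_j$ (the sign of $\Im[\alpha_\star\omega_\star^2 u_\star^2]$ must flip between adjacent intervals with different $n$-values, so it cannot return to $0$ in between). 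Since $\arg\left[\alpha_\star \omega_\star^2 u_\star^2(x)\right] = \arg(\alpha_\star\omega_\star^2) + 2\arg u_\star(x)$, this means $\arg u_\star(x)$ changes by exactly $\pi/2$ across an interior interval $I_j$.

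The next step is to estimate the rate of change of $\arg u_\star$ from above using \eqref{eqn:symArg}. For $x > L/2$ (the case $x < L/2$ is symmetric, using $\arg u_\star(L-x) = \pm\arg u_\star(x)$ and the symmetry of $n_\star$) we have
\begin{equation}
\left|\frac{\ud}{\ud x}\arg u_\star(x)\right| = 2|\Re\omega_\star|\,|\Im\omega_\star|\,|u_\star(x)|^{-2}\int_{L/2}^x n_\star^2(z)|u_\star(z)|^2\,\ud z \le 2|\Re\omega_\star|\,|\Im\omega_\star|\,|u_\star(x)|^{-2}\int_0^L n_\star^2|u_\star|^2\,\ud z. \nonumber
\end{equation}
On the interior interval $I_j$, bound $|u_\star(x)|^{-2} \le \left(\min_{x\in I_j}|u_\star(x)|^2\right)^{-1}$, and use the identity \eqref{eqn:tau}, namely $2|\Im\omega_\star|\int_0^L n_\star^2|u_\star|^2 = |u_\star(0)|^2 + |u_\star(L)|^2 = 2|u_\star(0)|^2$ (using $|u_\star(0)| = |u_\star(L)|$ from Proposition \ref{prop:uSym}), to convert the integral term into $|u_\star(0)|^2$. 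This gives
\begin{equation}
\left|\frac{\ud}{\ud x}\arg u_\star(x)\right| \le \frac{|\Re\omega_\star|\,|u_\star(0)|^2}{\min_{x\in I_j}|u_\star(x)|^2}, \qquad x\in I_j. \nonumber
\end{equation}
Integrating this bound over $I_j$ and combining with the fact that the total change of $\arg u_\star$ over $I_j$ equals $\pi/2$ yields
\begin{equation}
\frac{\pi}{2} = \left|\int_{x_j}^{x_{j+1}}\frac{\ud}{\ud x}\arg u_\star(x)\,\ud x\right| \le |x_{j+1}-x_j|\,\frac{|\Re\omega_\star|\,|u_\star(0)|^2}{\min_{x\in I_j}|u_\star(x)|^2}, \nonumber
\end{equation}
which rearranges to \eqref{eqn:intLengthBnd}.

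The main obstacle I anticipate is justifying rigorously that the change in $\arg u_\star$ across an interior interval is \emph{exactly} $\pi/2$ rather than merely a positive integer multiple of $\pi/2$ — this requires knowing that $x_j$ and $x_{j+1}$ are adjacent zeros of $\Im[\alpha_\star\omega_\star^2 u_\star^2]$, which follows because the sign of that quantity is constant on each interval (being $n_\star = n_+$ or $n_- $ according to Proposition \ref{corr:nChar}) and opposite on adjacent intervals, so $\arg$ cannot traverse a full multiple of $\pi$ within $I_j$ without forcing an extra transition point inside $I_j$. One must also handle carefully the left-most, right-most, and center intervals, which is why the statement is restricted to interior intervals; and one uses strict monotonicity of $\arg u_\star$ away from $x = L/2$ (valid since $\Re\omega_\star \ne 0$ by Lemma \ref{cor:ReOmegaPositive}) to ensure the phase genuinely sweeps through the interval without backtracking.
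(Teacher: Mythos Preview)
Your approach is essentially identical to the paper's: invoke Proposition~\ref{corr:nChar} to conclude that $\arg u_\star$ changes by exactly $\pi/2$ across an interior interval, then bound the rate of change of $\arg u_\star$ from above via \eqref{eqn:symArg} and \eqref{eqn:tau}. There is one small slip in your estimate. When you bound the inner integral $\int_{L/2}^x n_\star^2|u_\star|^2\,\ud z$ by $\int_0^L n_\star^2|u_\star|^2\,\ud z$ you overshoot by a factor of two; with that bound, substituting \eqref{eqn:tau} gives $\left|\frac{\ud}{\ud x}\arg u_\star(x)\right|\le 2|\Re\omega_\star|\,|u_\star(0)|^2/\min_{I_j}|u_\star|^2$, not the expression you wrote, and this yields only half of \eqref{eqn:intLengthBnd}. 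The fix is to extend only to $\int_{L/2}^L$ (as the paper does), and then use symmetry of $n_\star$ and $|u_\star|$ together with \eqref{eqn:tau} to obtain $2|\Im\omega_\star|\int_{L/2}^L n_\star^2|u_\star|^2\,\ud z = |u_\star(0)|^2$, which recovers the correct constant.
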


\begin{proof}
Corollary \ref{corr:nChar} implies that the argument of $u_\star(x)$  changes by exactly $\pi/2$ on an interior interval $I_j=[x_j, x_{j+1}]$. Using Eq. \eqref{eqn:symArg}, we compute 
\begin{align*}
\frac{\pi}{2} &=  2 |\Re\omega_\star| \ |\Im \omega_\star| \int_{x_j}^{x_{j+1}} |u_\star(x)|^{-2} \int_{L/2}^x n_\star^2(z) |u_\star(z)|^2 \ud z \ud x \\
&\leq 2 |\Re\omega_\star| \ |\Im \omega_\star| \int_{x_j}^{x_{j+1}} |u_\star(x)|^{-2} \int_{L/2}^{L} n_\star^2(z) |u_\star(z)|^2 \ud z \ud x \\
&=  |\Re\omega_\star| \ |u_\star(0)|^2 \int_{x_j}^{x_{j+1}} |u_\star(x)|^{-2} \ud x \
\leq  |\Re\omega_\star| \ |u_\star(0)|^2 \  |x_{j+1} - x_j|  \  \max_{x\in I_j} |u_\star(x)|^{-2} 
\end{align*}
where we used Eq. \eqref{eqn:tau} and the symmetry of $n(x)$ and $|u_\star(x)|$. Now by Lemma \ref{rem:nonVanish},  $|u_\star(x)|>0$ for all  $x \in I_j$, from which Eq. \eqref{eqn:intLengthBnd} follows. 
\end{proof}

\subsection{Locally optimal refractive indices and the  Bragg relation} 
\medskip

Denote $\sigma(x) := \Im (\alpha \omega_\star^2 u_\star^2(x))$.  From Proposition \ref{corr:nChar}, 
\begin{equation}
\sigma(x_{0}) = 0 \Leftrightarrow \ \arg u_\star(x_1) - \arg u_\star(x_0) = \frac{\pi}{2}. 
\label{ref:BraggSuffCond}
\end{equation}
We remark that  numerical experiments suggest that  $\sigma(x_{0}) \approx 0$ (see  Fig. \ref{fig:OptSchem}) and that $\sigma(x_0)\to0$ as $n_+$ or $L\to\infty$.  For the idealized limiting case we have:
\medskip

\begin{prop}[Bragg relation]
\label{prop:Bragg}
Assume a locally optimal mode satisfies \eqref{ref:BraggSuffCond}. Then, 
 the intervals on which $n_\star(x)$ is constant have the following properties:
\begin{enumerate}
\item  The length of all non-center intervals, \ie, $I_j = [x_j, x_{j+1}]$ for all $j \neq(N-1)/2$  is given by
$d_\pm = \frac{1}{4}  \frac{2 \pi}{n_\pm | \Re \omega_\star|} = \frac{1}{4} \lambda_{\text{eff},\pm}$,
where $\lambda_{\text{eff},\pm}$ is the ``effective'' wavelength of $u_\star$ in the medium $n_\pm$. 
\item Thus, a full ``period'' of the repeated structure is 
$\delta = d_+  +   d_-  =   \frac{1}{2 n_h} \frac{2 \pi}{|\Re \omega_\star|}$,
where $n_h = 2(n_1^{-1} + n_2^{-1})^{-1}$ is the harmonic mean of $n_{+}$ and $n_{-}$.
\item Additionally, the length of the center interval is less than $2 d_+$.
\end{enumerate}
\end{prop}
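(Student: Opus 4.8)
The plan is to exploit the monotonicity of phase formula \eqref{eqn:symArg} together with the bang-bang characterization \eqref{eqn:nChar} of Proposition \ref{corr:nChar}. The starting observation is that, under the standing assumption \eqref{ref:BraggSuffCond}, the optimality condition $\sigma(x)=0$ at every transition point forces $\arg u_\star$ to advance by exactly $\pi/2$ across each interval $I_j$, non-center intervals included. (This is already used in the proof of Proposition \ref{prop:intLengthBnd}; here we use the exact value, not just the inequality.) So for each non-center interval, I would write, via \eqref{eqn:symArg},
\[
\frac{\pi}{2} \;=\; \int_{x_j}^{x_{j+1}} \frac{\ud}{\ud x}\arg u_\star(x)\,\ud x
\;=\; 2(\Re\omega_\star)\,|\Im\omega_\star| \int_{x_j}^{x_{j+1}} |u_\star(x)|^{-2}\int_{L/2}^{x} n_\star^2(z)|u_\star(z)|^2\,\ud z\,\ud x .
\]
The key simplification, which I expect to be the heart of the argument, is to recognize that in the \emph{idealized limiting case} the mode $u_\star$ on each interval is (to leading order) a pure plane wave $e^{\pm i\Re\omega_\star n_\pm x}$ of the corresponding local wavenumber $\Re\omega_\star n_\pm$, so that $|u_\star|$ is essentially constant on $I_j$ and $\frac{\ud}{\ud x}\arg u_\star \approx \Re\omega_\star n_\pm$ there. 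Then the condition ``phase advances by $\pi/2$'' becomes $\Re\omega_\star n_\pm |x_{j+1}-x_j| = \pi/2$, i.e. $|x_{j+1}-x_j| = \frac{\pi}{2 n_\pm \Re\omega_\star} = \frac14\lambda_{\mathrm{eff},\pm}$, giving part (1). The sign of $n_\pm$ used is $n_+$ when $j$ is even and $n_-=1$ when $j$ is odd, matching \eqref{eqn:defxj}.

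Part (2) is then immediate arithmetic: one full period consists of one $n_+$-block and one $n_-$-block, so $\delta = d_+ + d_- = \frac{2\pi}{4\Re\omega_\star}\bigl(n_+^{-1}+n_-^{-1}\bigr) = \frac{1}{2 n_h}\frac{2\pi}{|\Re\omega_\star|}$ with $n_h = 2(n_+^{-1}+n_-^{-1})^{-1}$ the harmonic mean; I would just substitute and collect constants. For part (3), the center interval $[x_{(N-1)/2}, x_{(N+1)/2}]$ is the one straddling $x=L/2$, where the symmetry of $u_\star$ (Proposition \ref{prop:uSym}) forces either $u_\star(L/2)=0$ or $u_\star'(L/2)=0$; by \eqref{ref:BraggSuffCond} the total phase change across this interval, measured from $x_{(N-1)/2}$ to $L/2$ and then by symmetry to $x_{(N+1)/2}$, is at most $\pi$ (twice the half-interval advance, which is itself no more than $\pi/2$ by the structure of the optimality condition and the fact that consecutive transition points carry $\sigma=0$). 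Running the plane-wave estimate as in part (1) but with this doubled phase budget gives length at most $2d_+$, since the center interval has $n_\star = n_+$ by Conjecture \ref{conj:centerN} (or, if one does not wish to invoke that conjecture, one takes the relevant $n_\pm$).

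The main obstacle is making rigorous the passage to the ``idealized limiting case'': on a finite structure $u_\star$ is not exactly a plane wave on each block, so $|u_\star(x)|^{-2}\int_{L/2}^x n_\star^2|u_\star|^2$ is only approximately $n_\pm$ times the length, and \eqref{ref:BraggSuffCond} holds only approximately (this is precisely what Conjecture \ref{conj:Bragg} asserts becomes exact as $n_+\to\infty$ or $L\to\infty$). The honest statement is therefore conditional: assuming \eqref{ref:BraggSuffCond} holds exactly, the phase-advance identity above is exact, but one still needs that $|u_\star|$ is constant on each block — which is true exactly only when $n_\star$ is constant there \emph{and} the reflected component vanishes, i.e. in the traveling-wave regime. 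I would state the proposition as computing the interval lengths under the hypothesis that the local mode is a traveling wave of wavenumber $n_\pm|\Re\omega_\star|$ on each block, deferring the justification of that regime (impedance matching / quarter-wave stack heuristics) to the discussion of Conjecture \ref{conj:Bragg} and Appendix \ref{sec:Bragg}. The remaining steps — parts (2) and (3) — are then purely computational and carry no further difficulty.
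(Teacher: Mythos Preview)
Your approach has a genuine gap. You correctly identify that, under \eqref{ref:BraggSuffCond}, the phase of $u_\star$ advances by exactly $\pi/2$ across each non-center interval. But you then invoke an additional \emph{plane-wave approximation} ($|u_\star|$ constant on each block, $\tfrac{d}{dx}\arg u_\star \approx n_\pm\Re\omega_\star$) to convert this into the length $d_\pm$. You acknowledge this is the main obstacle and propose to defer it to the asymptotic regime of Conjecture~\ref{conj:Bragg}. That is not what the proposition claims: it asserts an \emph{exact} identity under the single hypothesis \eqref{ref:BraggSuffCond}, with no further approximation. Your integral-of-phase route via \eqref{eqn:symArg} cannot yield this, because for $\Im\omega_\star\neq 0$ the quantity $|u_\star(x)|^{-2}\int_{L/2}^x n_\star^2|u_\star|^2$ is not equal to $n_\pm$ on $I_j$.

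The paper's argument avoids this entirely by working with the exact ODE solution on each constant-$n$ interval,
\[
u_\star(x)=\alpha_j\cos[\omega_\star n_\pm(x-x_j)]-\tfrac{i\beta_j}{n_\pm}\sin[\omega_\star n_\pm(x-x_j)],
\]
with the full complex $\omega_\star$. The outgoing boundary condition forces $\alpha_0,\beta_0\in\mathbb{R}$; the hypothesis \eqref{ref:BraggSuffCond} together with Proposition~\ref{corr:nChar} then says the next transition $x_1$ is the first point where $u_\star$ becomes purely imaginary, i.e.\ where $\Re u_\star=0$. Using $\cos(\pi\omega_\star/2\omega_R)=-i\sinh(\pi\omega_I/2\omega_R)$ and $\sin(\pi\omega_\star/2\omega_R)=\cosh(\pi\omega_I/2\omega_R)$, one checks this happens \emph{exactly} at $x_0+d_+$, and moreover that $\alpha_1,\beta_1\in i\mathbb{R}$. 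An induction alternating between ``coefficients real'' and ``coefficients purely imaginary'' then gives $d_\pm$ on every non-center interval with no approximation. The key idea you are missing is this reality/imaginarity alternation of the transfer coefficients, which replaces your heuristic $|u_\star|\approx$ const.
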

\medskip

\noindent Since the numerical evidence is that \eqref{ref:BraggSuffCond} is satisfied only asymptotically for large $n_+$ or $L$, this suggests the following:
\medskip

\begin{conj}
\label{conj:Bragg}
The widths of the non-center intervals,
$$
\delta_{\pm}(n_{+}, n_{-}, L) \rightarrow \frac{1}{4} \frac{2 \pi}{ |\Re \omega_{\star}| n_{\pm}} = \frac{1}{4} \lambda_{\text{eff},\pm} 
\qquad \text{as $n_{+}\rightarrow \infty$ or $L \rightarrow \infty$}.
$$
\end{conj}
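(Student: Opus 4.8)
A natural route to Conjecture~\ref{conj:Bragg} would proceed as follows. The assertion is a limit statement, so the plan is to reduce it, via Proposition~\ref{prop:Bragg}, to a quantitative version of the sufficient condition \eqref{ref:BraggSuffCond}, and then to control the errors along the two families $n_+\to\infty$ and $L\to\infty$. Recall that Propositions~\ref{corr:nChar} and~\ref{prop:argu} already furnish, for each fixed optimizer $n_\star\in\calA_{sym}$, that $\arg u_\star$ is strictly monotone on $[0,L/2]$ and on $[L/2,L]$ and changes by exactly $\tfrac{\pi}{2}$ across every \emph{interior} layer $I_j=[x_j,x_{j+1}]$; only the edge layers $[x_0,x_1]$, $[x_{N-1},x_N]$ and the central defect layer are not phase-pinned, the edge case being governed by $\sigma(x_0):=\Im\bigl(\alpha\omega_\star^2 u_\star^2(x_0)\bigr)$ through \eqref{ref:BraggSuffCond}. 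Hence it suffices to prove, uniformly in the limit, (a) $\sigma(x_0)\to 0$, so that \emph{all} non-center layers carry $\tfrac{\pi}{2}(1+o(1))$ of phase, and (b) on each non-center layer $I_j\subset\{n_\star\equiv n_\pm\}$ the geometric length is $\tfrac{\pi/2}{|\Re\omega_\star|\,n_\pm}(1+o(1))=\tfrac14\lambda_{\text{eff},\pm}(1+o(1))$; then $\delta_\pm\to\tfrac14\lambda_{\text{eff},\pm}$, and item~3 of Proposition~\ref{prop:Bragg} keeps the defect length below $2d_+(1+o(1))$.

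The first concrete step is to establish $|\Im\omega_\star|\downarrow 0$, with $|\Re\omega_\star|$ bounded above by $\rho$ and (for $L$ large, by Lemma~\ref{cor:ReOmegaPositive}) bounded below, along both families: for $L\to\infty$ by comparison with an explicit quarter-wave-stack trial structure whose narrowest resonance has width $O(e^{-cL})$, and for $n_+\to\infty$ by comparison with a fixed-length quarter-wave stack whose per-period Bloch attenuation at midgap grows with $n_+/n_-$ (cf.\ Appendix~\ref{sec:Bragg} and \cite{Heider-Weinstein-unpubl}). Next, on a layer where $n_\star\equiv n_\pm$ the mode solves $u_\star''+\omega_\star^2 n_\pm^2 u_\star=0$, so the Riccati variable $y:=u_\star'/u_\star$ obeys $y'=-\omega_\star^2 n_\pm^2-y^2$ with fixed points $\pm i\omega_\star n_\pm$, and $(\arg u_\star)'=\Im y$; the branch $+i\omega_\star n_\pm$ (the attractor for increasing $x$, with $\Re y>0$ consistent with a mode growing toward the center, and $\Im y=-|\Re\omega_\star|n_\pm+o(1)$) is the relevant one on $[0,L/2]$. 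Target (b) then becomes: uniformly over non-center layers, $\Im y$ stays within $o(|\Re\omega_\star|n_\pm)$ of $\mp|\Re\omega_\star|n_\pm$ on $[0,L/2]$ and $[L/2,L]$ respectively; integrating $\tfrac{\pi}{2}=\int_{I_j}|\Im y|\ud x$ then pins $|I_j|=\tfrac{\pi}{2|\Re\omega_\star|n_\pm}(1+o(1))$.

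The crux is to prove this pinning, equivalently that the optimizer converges to a quarter-wave stack with a small defect. The mechanism is that the resonator consists of a central defect flanked by Bragg mirrors, $|\Im\omega_\star|$ equals to leading order the transmittance of those mirrors, and, among two-material layered mirrors of a fixed number of layers at a fixed design frequency $|\Re\omega_\star|$, the transmittance is minimized — equivalently, the per-period Bloch attenuation is maximized — exactly by the quarter-wave stack (for periodic media this is the one-dimensional midgap/band-gap optimality recorded in Appendix~\ref{sec:Bragg} and \cite{Cox:1999qf,Osting2012}; in transfer-matrix language, a bilayer each of whose layers is a quarter wave at $|\Re\omega_\star|$ is precisely the one with diagonal monodromy matrix, hence the mirror reflects perfectly there). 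Combining this with the first-variation formula \eqref{eqn:gradomega} for $|\Im\omega|$: a definite deviation of an interior layer from the quarter-wave length would force a definite relative increase of $|\Im\omega_\star|$ up to $o(1)$ corrections, contradicting optimality in the limit; the residual phase needed to satisfy the outgoing conditions at $x=0,L$ is absorbed into the defect (whence item~3), and the vanishing edge mismatch is what drives $\sigma(x_0)\to 0$, giving (a).

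The main obstacle is precisely this \emph{global} control of the minimizer. Every rigorous tool available here — the bang-bang characterization (Propositions~\ref{prop:bangbang} and~\ref{corr:nChar}), the monotone-phase identity \eqref{eqn:symArg}, the a priori width bound (Theorem~\ref{thm:aprioriBnd}) — is either purely local or holds for \emph{all} symmetric $n$, and none classifies the global optimum. A rigorous proof appears to need either a uniqueness/classification theorem for the minimizer in the large-$n_+$ (or large-$L$) regime, or a quantitative stability estimate of the form ``interior layers off quarter-wave by $\varepsilon$ $\Rightarrow$ $|\Im\omega|$ larger by a factor $1+c\varepsilon^2$,'' uniform in the limit and covering the non-periodic edge layers $[x_0,x_1]$, $[x_{N-1},x_N]$ (recall the footnote after \eqref{eqn:defxj}) and the defect. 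Making the latter rigorous — in effect, controlling the oscillatory Riccati corrections in the mirror well enough to beat the envelope $e^{-2n_\pm|\Im\omega_\star|x}$, which is only $1-o(1)$ per layer — is the genuine difficulty, and is why the statement is recorded here as a conjecture.
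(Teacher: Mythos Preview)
The paper offers \emph{no} proof of Conjecture~\ref{conj:Bragg}: it is stated as a conjecture, motivated solely by the numerical experiments of Section~\ref{sec:1dnum} (which indicate $\sigma(x_0)\approx 0$ and that $\sigma(x_0)\to 0$ as $n_+\to\infty$ or $L\to\infty$) together with the conditional Proposition~\ref{prop:Bragg}. Your proposal correctly recognizes this status and, rather than claiming a proof, lays out a strategy whose endpoint honestly names the missing ingredient. In that sense there is nothing to ``compare'' against; your outline is consistent with, and somewhat more detailed than, the paper's own heuristic framing (reduce to $\sigma(x_0)\to 0$, then invoke Proposition~\ref{prop:Bragg}).

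Two comments on the outline itself. First, the Riccati fixed-point picture in your step~(b) is not quite right as a mechanism. On a layer of constant index $n_\pm$, $y=u_\star'/u_\star$ does \emph{not} relax to either fixed point $\pm i\omega_\star n_\pm$: the layer is only about a quarter period long, and $y$ executes a finite arc of the M\"obius flow rather than approaching an attractor. Moreover, the sign bookkeeping is inconsistent: on $[0,L/2]$ Proposition~\ref{prop:argu} gives $\Im y=(\arg u_\star)'<0$ when $\Re\omega_\star>0$, which selects the fixed point $-i\omega_\star n_\pm$; but that fixed point has $\Re(-i\omega_\star n_\pm)=-|\Im\omega_\star|\,n_\pm<0$, contradicting your ``$\Re y>0$ consistent with a mode growing toward the center.'' The correct picture (and the one the paper uses in the proof of Proposition~\ref{prop:Bragg}) is the explicit cosine/sine representation \eqref{eq:uansatz} on each layer, tracking when $\Re u_\star$ or $\Im u_\star$ next vanishes; the quarter-wave length emerges from that algebra, not from a fixed-point argument.

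Second, your identification of the genuine obstacle---the absence of any global classification or quantitative stability estimate for the minimizer, so that the local tools (Propositions~\ref{prop:bangbang}, \ref{corr:nChar}, \ref{prop:argu}) cannot by themselves force $\sigma(x_0)\to 0$---is exactly the reason the paper leaves this as a conjecture. The proposed stability route (``off quarter-wave by $\varepsilon$ $\Rightarrow$ $|\Im\omega|$ larger by $1+c\varepsilon^2$'') is a reasonable target, but note that even Proposition~\ref{prop:Bragg} assumes \eqref{ref:BraggSuffCond} exactly, so one would also need a perturbative version of that proposition to close the loop.
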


\begin{proof}[Proof of Proposition \ref{prop:Bragg}] Write $\omega_\star = \omega_R + \imath \omega_I$. 
The solution on any interval $I_{j} = [x_j, x_{j+1}]$ where $n_{\star}(x) \equiv n_\pm$ can be written
\begin{equation}
\label{eq:uansatz}
u_{\star}(x) = \alpha_{j} \cos[\omega_{\star} n_\pm (x-x_j)] - \imath \beta_{j} / n_\pm \sin[\omega_{\star} n_\pm (x-x_j)]
\end{equation}
where $\alpha_{j}$ and $\beta_{j}$ are chosen such that 
$\alpha_{j}=u_{\star}(x_j)$  and $ \beta_{j} = (\imath/\omega) u_{\star}'(x_j)$. 
We record the following formulas for use in the argument below:
\begin{align}
\nonumber
&u_{\star}'(x) = -\alpha_{j} \omega_{\star} n_\pm \sin[\omega_{\star} n_\pm (x-x_j)] - \imath \omega_{\star} \beta_{j} \cos[\omega_{\star} n_\pm (x-x_j)] \\
\nonumber
&\sin \left( \frac{\pi \omega_{\star}}{2 \omega_R}\right) = \cosh \left( \frac{\pi \omega_I}{2 \omega_R}\right), \quad
\cos\left( \frac{\pi \omega_{\star}}{2 \omega_R}\right) = -\imath \sinh \left( \frac{\pi \omega_I}{2 \omega_R}\right), \\
\label{eq:uprimec}
&\Rightarrow \quad  u_{\star}'(x_j + d_\pm) = -\alpha_{j} \omega_{\star} n_\pm \cosh \left( \frac{\pi \omega_I}{2 \omega_R}\right)  - \omega_{\star} \beta_{j} \sinh  \left( \frac{\pi \omega_I}{2 \omega_R}\right). 
\end{align}

Without loss of generality, let  $\Re \omega_{\star}>0$ and assume that on the first interval, $I_0 = [x_0, x_1]$,  $\alpha_{0} = \beta_{0} = 1$ which satisfy the outgoing boundary conditions for $x<x_{0}$. We now show that as $x$ increases, there are two alternating cases which define $n_{\star}(x)$ on the interior intervals. 

\paragraph{Case 1} Suppose that on the interval $I_j=[x_j, x_{j+1}]$, $\alpha_{j},\beta_{j} \in \mathbb R$ and $n=n_+$. This is the case for the first interval, $I_0$. 
On an interior interval, by Prop. \ref{corr:nChar}, the point $x_{j+1}$ is that point at which $\arg u_\star(x)$ has increased by $\pi/2$. On the first interval, this is precisely assumption  \eqref{ref:BraggSuffCond}.
Since $\Re u_{\star}(x_{j})=0$,  an increase of $\arg u_\star(x)$ by $\pi/2$ on $I_{j}$ is equivalent to $\Re u_{\star}(x_{j+1})=0$. 
The value of $x$ such that $\Re u_{\star}(x)$ given in \eqref{eq:uansatz} next vanishes is  $x_{j+1} = x_j + d_+$.  Using \eqref{eq:uprimec},  $u'(x_{j+1}) = \omega \gamma$ where $\gamma \in \mathbb R$. 
 Thus, the coefficients  $\alpha_{j+1}$ and $\beta_{j+1}$ on the next interval, $I_{j+1}$, are purely imaginary.

\paragraph{Case 2} Suppose that on an interior interval, $I_{j} =[x_j, x_{j+1}]$,  $n_{\star}(x)=n_-$ and $\alpha_{j}$ and $\beta_{j}$ are purely imaginary, \ie, $\alpha_{j},\beta_{j} \in \imath \mathbb R$. 
By Prop. \ref{corr:nChar}, the  point $x_{j+1}$ is defined to be the first value for which $\Im u_{\star}(x_{j+1})=0$. 
 Using, \eqref{eq:uansatz}, we find that $x_{j+1} = x_j + d_-$. 
By   \eqref{eq:uprimec}, $u'(x_{j+1}) = \imath \omega \gamma$ where $\gamma \in \mathbb R$. Thus the coefficients  
$\alpha_{j+1}$ and $\beta_{j+1}$ on the next interval $I_{j+1}$ are purely real, which is precisely Case 1.

Thus, the intervals alternate between Cases 1 and 2 until the center interval, $I_{(N-1)/2} = [x_{(N-1)/2}, x_{(N+1)/2}]$.  is reached. Here, by Prop. \ref{prop:argu}, $\arg u_{\star}(x)$ increases on $[x_{(N-1)/2}, L/2]$ and decreases on $[L/2, x_{(N+1)/2}]$. 
By Prop. \ref{corr:nChar},  $\arg u_{\star}(x)$ changes by less than $\pi/2$ on each of these half intervals, which implies that the width of each of these half intervals is less than $d_+$. 
\end{proof}

\section{Discussion}
In this article we have investigated the question of finding compactly supported structures, defined in terms of a refractive index, $n(\x)$,  for which the Helmholtz equation has very long lived scattering resonances in dimension d =1,2,3. Here $n$ varies in an admissible set of refractive indices ${\cal A}\left(\Omega,n_-,n_+,\right)$,
defined in terms of the support of the structure ($\Omega$) and upper ($n_+$) and lower ($n_-$) pointwise bounds for $n$.  We now summarize,  based on our analytic, asymptotic, and numerical studies, further expectations, beyond what we've established, for the optimal one-dimensional structure and its corresponding resonance of minimal imaginary part,  $|\Im\omega_\star|\equiv\Gamma_\star^\rho$,  

\begin{enumerate}
\item We expect that the optimal resonance $\omega_\star$ such that $\Gamma_\star^\rho(\calA) = |\Im \omega_\star|$ is achieved for a resonance with real part approximately imposed upper bound:
$\Re \omega_\star \approx \rho$.
We also expect its imaginary part to be exponentially small in $\rho$: $\Im \omega_\star \sim A \exp(- B \rho)$,
where $A$ and $B$ are constants dependent on $\cal A$.
\item Furthermore, we expect that the number of times  the optimal refractive index, $n_\star(x)$, alternates between $n_+$  and $n_-$  on $[0,L/2]$ is approximately $M = (N-1)/4  \approx \frac{L/2}{\delta} = \frac{n_h \Re\omega L}{2\pi} \approx \frac{n_h \rho L}{2\pi}.$
\item Simple models \cite{Heider-Weinstein-unpubl}  suggest  that $|\Im \omega_\star| \sim n_+^{-M}$
and thus $\Gamma_\star^\rho \sim n_+^{-\rho n_h L/ 2\pi}$. 
\end{enumerate}
\medskip

Finally, we remark on a number of interesting future questions and directions.
In the present work we have established that classical one-dimensional Bragg structures arise as optimal one-dimensional compactly supported dielectric cavities with finite material contrast in the limit that the support or material contrast tends to infinity. Future work will include study of the deviation of locally optimal $n_\star(x)$ from the Bragg relation defined in \eqref{prop:Bragg} for finite support and material contrast. 

Our work establishes the existence of a solution to the optimal  design problem in two and three spatial dimensions ($d=2,3$). It would be interesting to study the limit of large support or large material contrast for $d=2$ or $3$. Perhaps this leads to a  higher-dimensional variant of Bragg structures. 

We conclude mentioning that the measure of resonance lifetime to be optimized is often chosen to suit the particulars of the problem. We have chosen to maximize: $\tau = |\Im \omega|^{-1}$. Other choices  include the quality factor, $Q = \frac{\Re \omega}{2 |\Im \omega |}$, which measures loss per unit oscillation cycle,  and the Purcell factor, $\propto Q/V$, where $V$ denotes the ``mode volume''. The Purcell factor is particularly important in applications where a strong light-matter interaction is desired. Maximization of such  quantities would be a natural and interesting extension of this work.

\appendix

\section{Examples: resonances for radially symmetric cavities in $\mathbb{R}^d,\ d=1,2,3$}
\label{sec:simple-example}
In this subsection, we discuss the resonances for a simple example in dimension $d=1$, 2, and 3. We take 
$\Omega=\{\x\colon  |\x|<a \}$ and $n(\x)$ defined by: 
\begin{equation}
\label{eqn:nSimpEx}
n(\x)  = \begin{cases}
n_0 & |\x| <a \\
1 & |\x| >a.
\end{cases}
\end{equation}
where $n_0>1$ and $a>0$ are constants.
We now consider the scattering resonance problem  \eqref{SRP}; see also \cite[Ch. 8]{Nockel:1997fk} and \cite{Dubertrand:2008fk}.

\subsection{Dimension d=1}
Imposing outgoing radiation conditions we find that
\begin{equation}
\label{eqn:1DresState}
u(x) = \begin{cases}
A e^{-\imath \omega x} & x < -a \\
B e^{\imath \omega n_0 x} + C e^{-\imath \omega n_0 x} & |x| < a \\
D e^{\imath \omega x} & x > a \  .
\end{cases}
\end{equation} 
Imposing continuity of $u$ and $\partial_x u$ at $x=\pm a$ yields a  $4\times 4$ linear system of equations for the constants $A$, $B$, $C$, and $D$, whose non-trivial solvability requires the vanishing of a determinant. Resonances are the values $\omega \in \mathbb C$ for which this determinant vanishes:
$e^{-2 \imath \omega (n_0-1) a } \left[ e^{4 \imath  \omega n_0 a } (n_0-1)^2 - (n_0+1)^2\right] = 0, $
whose solutions  are given by 
\begin{equation}
\label{eqn:1Dres}
\omega_m = \frac{\pi m}{2 n_0 a} - \imath \frac{1}{2 n_0 a} \log \left| \frac{n_0 + 1}{n_0 - 1} \right|  \quad \quad m \in \mathbb N. 
\end{equation}
The resonances in Eq. \eqref{eqn:1Dres} for $n_0=2$, $a=1$ are plotted in Fig. \ref{fig:nDres}(left).

\begin{figure}[t]
  \begin{center}
  \includegraphics[width=2.05in]{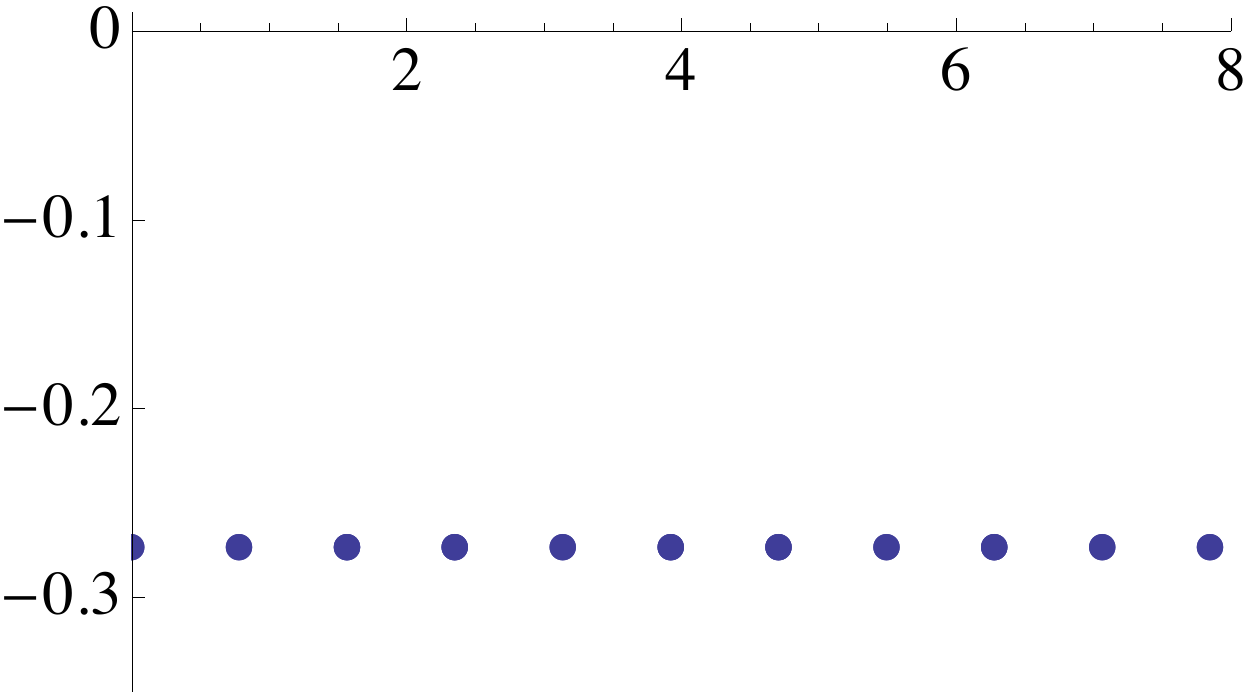}
  \includegraphics[width=2.05in]{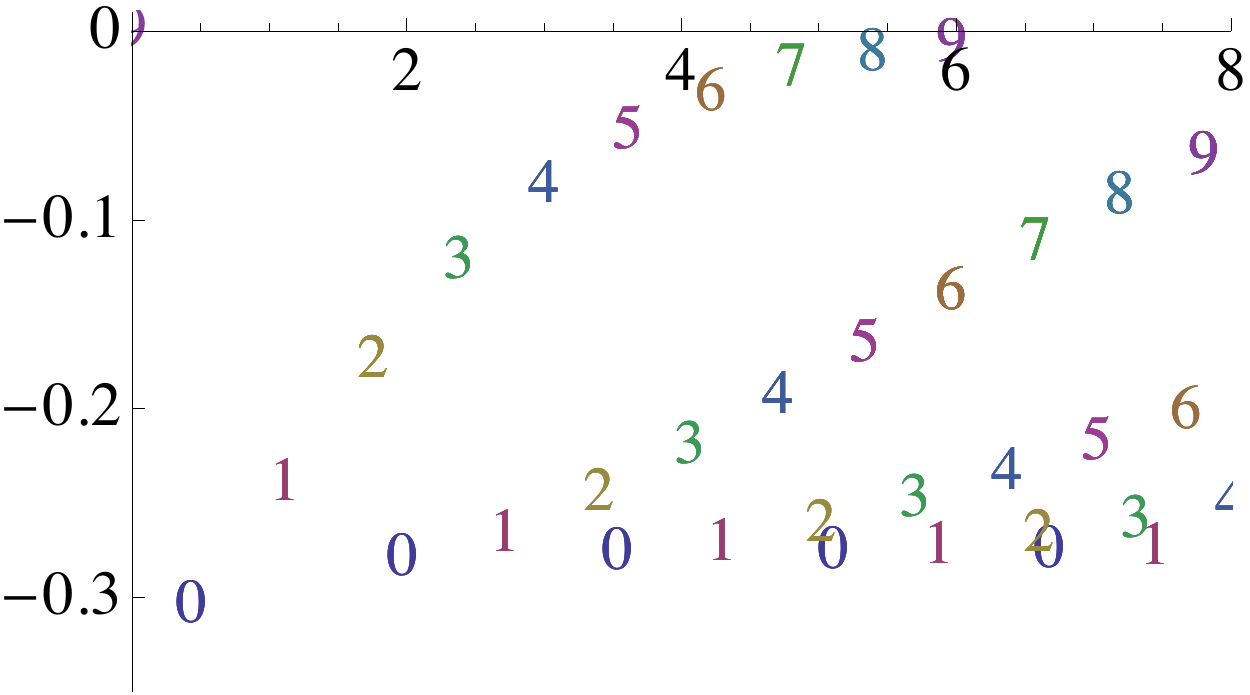}
  \includegraphics[width=2.05in]{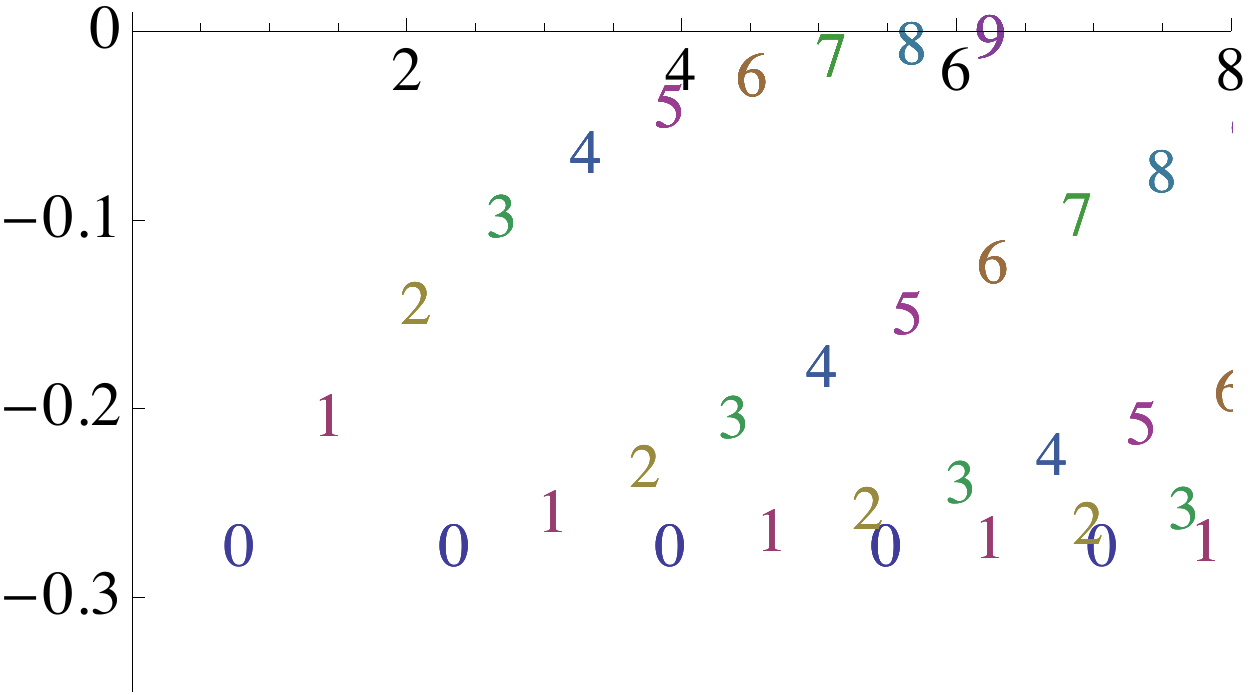}
  \caption{Helmholtz resonances for index of refraction given in Eq. \eqref{eqn:nSimpEx} in 1-, 2-, and 3-dimensions computed using Eqs. \eqref{eqn:1Dres}, \eqref{eqn:2Dres}, and \eqref{eqn:3Dres}. In each dimension, $n_0=2$ and $a=1$. \label{fig:nDres} }
  \end{center}  
\end{figure}

\subsection{Dimension d=2}
Outgoing solutions bounded at the origin are given by
$$
u(r,\theta) = \begin{cases}
A J_\ell(n_0 \omega r) e^{\imath \ell \theta} & r<a \\
B H_\ell^{(1)} (\omega r) e^{\imath \ell \theta} & r>a, \ \ell \in \mathbb Z
\end{cases}
$$
Imposing continuity of  $u$ and $\partial_r u$ at $r=a$ leads to a system of two linear homogeneous equations for $A$ and $B$. This equation is solvable if and only if
\begin{equation}
\label{eqn:2Dres}
J_\ell(n_0 \omega a)  H_\ell^{(1)\prime} (\omega  a)  - n_0  H_\ell^{(1)} (\omega  a) J_\ell'(n_0 \omega a) = 0. 
\end{equation}
We numerically solve Eq. \eqref{eqn:2Dres} for $n_0=2$, $a=1$, and $\ell=0,\ldots,9$ and plot the resonances in Fig. \ref{fig:nDres}(center). For each angular momentum, $\ell\ge0$, there is an infinite sequence of resonances, $\{\omega_{\ell,j}\}_{j\ge1}$, in the lower half plane. Fixing $\ell$ and letting $j$ tend to infinity, these resonances are seen to have the asymptotics:
\begin{equation}
\label{eqn:2dhighFreqAsym}
\omega_{\ell,j} \sim \frac{\pi}{n_0 a} \left( j + \frac{1}{4} + \frac{\ell}{2} \right) 
- \imath \frac{1}{2 n_0 a} \log \left| \frac{n_0 + 1}{n_0 - 1} \right|  \quad \quad j >> 1, 
\end{equation}
as derived in \cite{Nockel:1997fk}. For fixed $j$ and angular momentum, $\ell$, tending to infinity the sequence of resonances approaches the real axis exponentially  \cite{Nockel:1997fk,Dubertrand:2008fk}.  Note that $\omega_{0,j}$ has multiplicity $1$ and  $\omega_{\ell,j}$ has multiplicity $2$  for $j\ge1$. The mode plotted in Fig. \ref{fig:mech}(center) corresponds to the 2-dimensional resonance with $\ell=6$ and $j=1$.

\subsection{Dimension d=3} 
The analysis is analogous to the case of spatial dimension $2$. 
Solutions bounded at the origin and outgoing at infinity are given by 
$$
u(r,\theta,\phi) = \begin{cases}
A j_\ell (n_0 \omega r)  Y_\ell^m (\theta,\phi) & r<a \\
B h_\ell^{(1)} (\omega r) Y_\ell^m (\theta,\phi) & r>a,\ \ |m| \leq \ell \in \mathbb N\ .
\end{cases}
$$
Imposing continuity of  $u$ and $\partial_r u$ at $r=a$ leads to the solvability condition 
\begin{equation}
\label{eqn:3Dres}
j_\ell(n_0 \omega a)  h_\ell^{(1)\prime} (\omega  a)  - n_0  h_\ell^{(1)} (\omega  a) j_\ell'(n_0 \omega a) = 0. 
\end{equation}
We numerically solve Eq. \eqref{eqn:3Dres} for $n_0=2$, $a=1$, and $\ell=0,\ldots,9$ and plot the resonances in Fig. \ref{fig:nDres}(right). For fixed angular momentum,  $\ell$,
 and $j$ tending to infinity, the scattering resonances follow the asymptotics
\begin{equation}
\label{eqn:3dhighFreqAsym}
\omega_{\ell,j} \sim \frac{\pi}{n_0 a} \left( j + \frac{1}{2} + \frac{\ell}{2} \right) 
- \imath \frac{1}{2 n_0 a} \log \left| \frac{n_0 + 1}{n_0 - 1} \right|  \quad \quad m >> 1. 
\end{equation}
For fixed $j$ and large angular momentum, $\ell$,  an asymptotic analysis involving spherical Hankel functions, analogous to that in \cite{Nockel:1997fk,Dubertrand:2008fk}, shows that the $\omega_{\ell,j}$ tend exponentially toward the real axis. Note that $\omega_{\ell,j}$ has multiplicity $2\ell+1$.

\section{Calculation of the variation  of a scattering resonance, $\omega$, with respect to the refractive index, $n(x)$; proof of Proposition \ref{prop:gradomega}}
\label{sec:variations}

The scattering resonance problem \eqref{SRP} can be written 
\begin{align}
\label{SRP2}
u(\x;\omega) =  \omega^2  R_\omega[u](\x;\omega ),  \text{ where } R_\omega[u](\x;\omega ) := \int_{\Omega} G(\x,\y ,\omega) \ m(\y) \ u(\y;\omega ) \ud \y
\end{align}
and $m(\x) := n^2(\x) -1$ is non-negative with support in $\Omega$. 

 We consider a perturbation of $n\in \calA$ of the from $n(\x) \mapsto n(\x) + \delta n(\x)$ where $\delta n(\x) \in L^\infty(\mathbb R^d)$ with support in $\Omega$. Denote variations with respect to $n$ using a prime and the variation of $G(\x,\y,\omega)$ with respect to $\omega$ by $\dot{G}(\x,\y,\omega)$. For $\x \in \Omega$,
we take variations of \eqref{SRP2} to obtain 
\begin{align*}
\left( \text{Id} - \omega^2 R_\omega \right) u'(\x;\omega) 
&=  2 \omega \omega' R_\omega [u](\x;\omega)  
+ \omega^2 \int_{\Omega} \dot{G}(\x, \y,\omega)\ \omega' \ m(\y) \ u(\y;\omega ) \ud \y \\
& \quad + \omega^{2} \int_{\Omega} G(\x, \y,\omega) \ m'(\y) \ u(\y;\omega ) \ud \y.
\end{align*}
Denoting  $\langle f, g \rangle = \int_\Omega \overline{f} g $ and  taking the inner product with a general $v \in L^2(\Omega)$, we obtain 
\begin{align}
\label{eqn:dotV}
\langle v, \left( \text{Id} -  \omega^2 R_\omega \right) u' \rangle =&   2 \omega \omega' \langle v,  R_{\omega}[u](\cdot;\omega)\rangle \\
\nonumber
& + \omega^{2} \omega' \int_{\Omega} \int_{\Omega} \overline{v(\x)} \ \dot{G}(\x,\y,\omega) \ m(\y) \ u(\y;\omega) \ud \y \ud \x \\
\nonumber
 & + \omega^{2} \int_{\Omega} \int_{\Omega}  \overline{v(\x)} \ G(\x,\y,\omega) \ m'(\y) \ u(\y;\omega)\ud \y \ud \x.
\end{align}
Using the adjoint operator 
$$
R_\omega^\ast[v](\x;\omega) = m(\x) \int_{\Omega} \overline{G (\x,\y,\omega)} v(\y) \ud \y,
$$ 
the left hand side of \eqref{eqn:dotV} can be rewritten  
\begin{align*}
\langle v, \left( \text{Id} - \omega^2 R_\omega \right) u' \rangle
&= \int_\Omega \overline{v}(\x) u(\x) \ud \x - \omega^{2} \int_\Omega \int_\Omega \overline{v(\x)} G(\x,\y,\omega) \ m(\y) \ u'(\y;\omega)\ud \y \ud \x \\
&= \langle \left( \text{Id} - \overline{\omega}^2  R_\omega^{\ast} \right) v,  u' \rangle. 
\end{align*}
Setting $v(\x) = v(\x; \omega) = \overline{\omega}^{2} m(\x) \overline{u(\x; \omega)}$, we compute

\begin{align}
\left( \text{Id} - \overline{\omega}^2  R_\omega^{\ast} \right) v(\x; \omega) 
&=   \overline{\omega}^{2} m(\x) \left( \overline{u(\x;\omega)} - \overline{\omega}^2 \int_{\Omega} \overline{G (\x,\y,\omega)} m(\y) \overline{u(\y;\omega)} \ud \y \right)\nonumber \\
&=  \overline{\omega}^{2} m(\x) \overline { \left( \text{Id}  - \omega^2 R_\omega  \right) u(\x; \omega) }\ = 0 .
\label{eq:EqForV}\end{align}
Therefore, for the above choice of $ v(\x;\omega)$, the left hand side of \eqref{eqn:dotV} vanishes.
The first and third terms on the right hand side of \eqref{eqn:dotV} are evaluated to be:
\begin{align}
\label{piece1}
\left\langle v,  R_{\omega}[u] \right\rangle = \left\langle \overline{\omega}^{2} m\overline{u},  \omega^{-2} u \right\rangle = \left\langle  m \ \overline{u},  u \right\rangle 
\end{align}
and
\begin{align}
\label{piece2}
 \int_{\Omega} \int_{\Omega}  \overline{v(\x)} \ G(\x,\y,\omega) \ m'(\y) \ u(\y;\omega)\ud \y \ud \x = \langle \overline{u}, m' u \rangle.
\end{align}
Therefore \eqref{eqn:dotV} reduces to
\begin{align*}
\omega' \left( 2 \omega  \langle \overline u, m \ u \rangle + \omega^{4} \int_{\Omega} \int_{\Omega} m(\x) \ u(\x;\omega) \ \dot{G}(\x,\y,\omega) \ m(\y) \ u(\y;\omega) \ud \y \ud \x \right)   =
  - \omega^{2} \langle \overline{u},m' \ u \rangle 
\end{align*}
or, equivalently, recalling that $m'=(n^2)'=2n\delta n$, 
\begin{align*}
\omega' =  - 2 \alpha \omega^{2} \left\langle n \overline{u(\cdot; \omega)}^2 , \delta n \right\rangle \equiv \left\langle \frac{\delta \omega}{\delta n}, \delta n \right\rangle,
\end{align*}
where
$$
\alpha^{-1} =  2 \omega  \langle  \overline {u(\cdot; \omega)}, m \ u(\cdot; \omega) \rangle + \omega^{4} \int_{\Omega} \int_{\Omega} m(\x) \ u(\x ; \omega) \ \dot{G}(\x,\y,\omega) \ m(\y) \ u(\y;\omega) \ud \y \ud \x   .
$$
Thus, 
$$
\left| \omega[n+\delta n] - \omega[n] - \left\langle \frac{\delta \omega}{\delta n} , \delta n\right\rangle \right| = o\left( \| \delta n \|^2_{L^2(\Omega)} \right)
$$
with
\[ \frac{\delta \omega}{\delta n}\ =\ - 2\ \overline{\alpha}\ \overline{\omega}^{2}  n \overline{u(\cdot; \omega)}^2, \]
proving \eqref{eqn:gradomega}. 

In one dimension,  the explicit formula for the Green's function given in  \eqref{eqn:explicitG-1} may be used to obtain the first equality in \eqref{eqn:1dalpha}. This formula agrees with \cite[Eq. (19)]{OptimizationOfScatteringResonances}. The second equality in \eqref{eqn:1dalpha} can be obtained using the identity
$
\omega^2 \int n^2 u^2 =  \int u_x^2 - \imath \omega [u^2(0) + u^2(L)],
$
obtained by multiplying \eqref{1dSRPa} by $u$ and integrating by parts.

\section{Bragg's relation maximizes the spectral gap to \\ midgap ratio} \label{sec:Bragg}
Consider a one-dimensional $n(x)$, infinite in extent, which is periodic with period $d$, \ie, $n(x+d) = n(x)$, and has alternating layers,  \ie,
$$
n(x) = 
\begin{cases} 
n_1 & 0< x< b \\
n_2 & b< x< d.
\end{cases}
$$
It is shown in \cite{Yeh:1988fk} that the wave equation \eqref{eqn:wave} has a solution of the form $v(x,t) = e^{\imath (\omega t - kx)} u_{per}(x)$ where $u_{per}(x+d) = u_{per}(x)$ if $\omega$ and $k$ satisfy the dispersion relation
\begin{equation}
\label{eqn:dispRel}
cos(kd) = cos(\omega n_1 b) \cos \omega n_2 (d-b) - \frac{1}{2}\left( \frac{n_2}{n_1} + \frac{n_1}{n_2}\right) \sin(\omega n_1 b) \sin \omega n_2 (d-b).
\end{equation} 
The \emph{Bragg relation} is defined
\begin{align}
\label{eqn:Bragg}
n_1b =  n_2 (d-b)  =\frac{1}{4} \frac{2 \pi}{\omega} 
\quad \Rightarrow \quad
d= \frac{1}{2 n_h} \frac{2 \pi}{\omega}, \quad b= \frac{n_h}{n_1} \frac{d}{2},  \quad (d-b) = \frac{n_h}{n_2} \frac{d}{2}
\end{align}
where $n_h = 2(n_1^{-1} + n_2^{-1})^{-1}$ is the harmonic mean of $n_1$ and $n_2$. The width of each layer is a quarter wavelength, \ie, the phase of the wave changes by $\pi/2$ in each layer. 
For a Bragg structure, the dispersion relation \eqref{eqn:dispRel} simplifies: 
$$
cos(kd) = 1 - \frac{1}{2} \frac{(n_1+n_2)^2}{n_1 n_2} \sin^2\left(\frac{\omega n_h d}{2}\right).
$$
Thus, the dispersion relation for a Bragg structure has a spectral band gap centered at $\omega = \frac{\pi}{d n_h}$ of width $\frac{4}{d n_h} \sin^{-1}\frac{|n_1 - n_2|}{n_1 + n_2}$. The dispersion relation with  $n_1=1$ and $n_2=2$ is plotted in Fig. \ref{fig:bragg}(left). 

\begin{figure}[t]
\begin{center}
\includegraphics[width=2.5in]{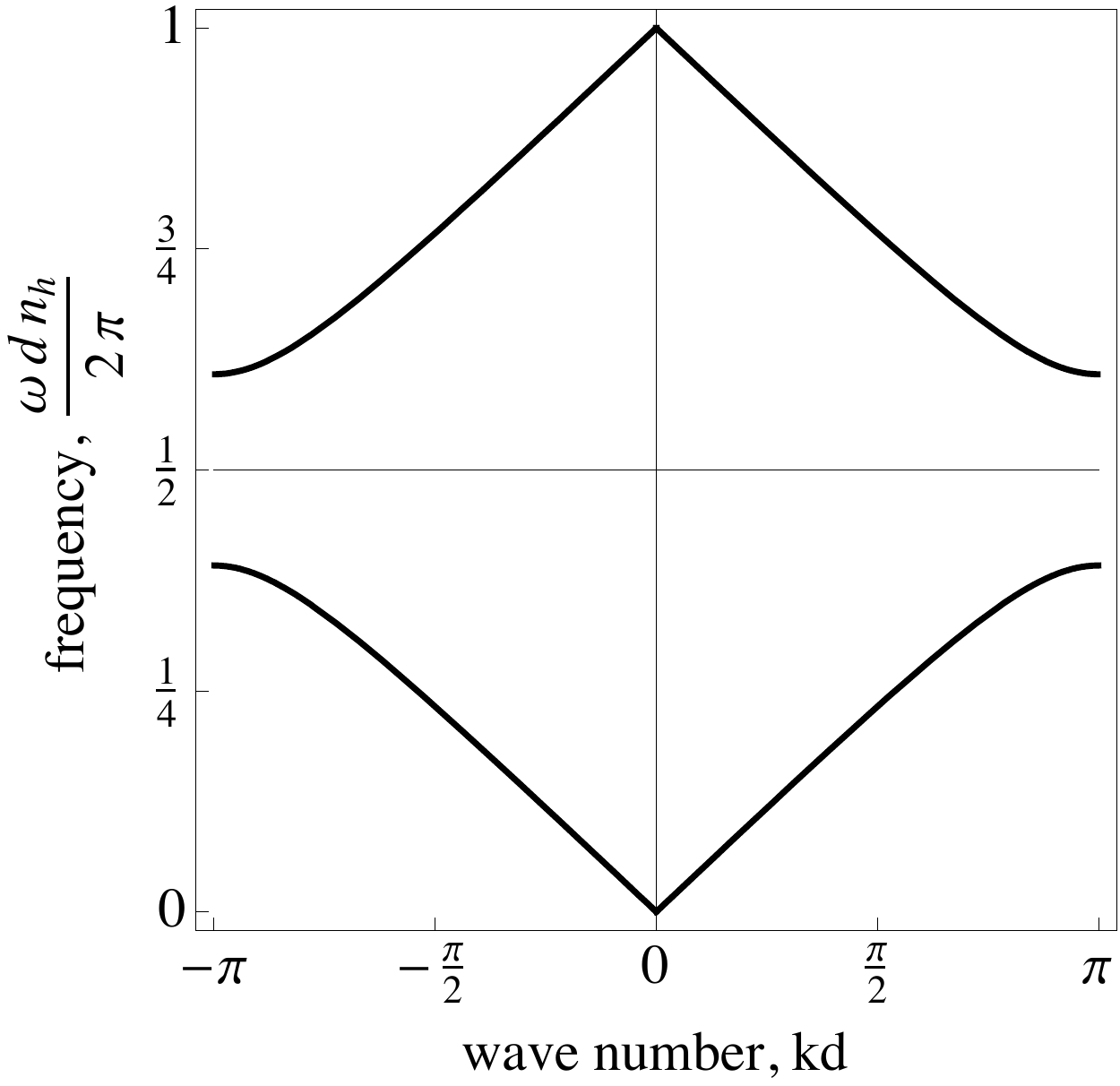}
\includegraphics[width=2.5in]{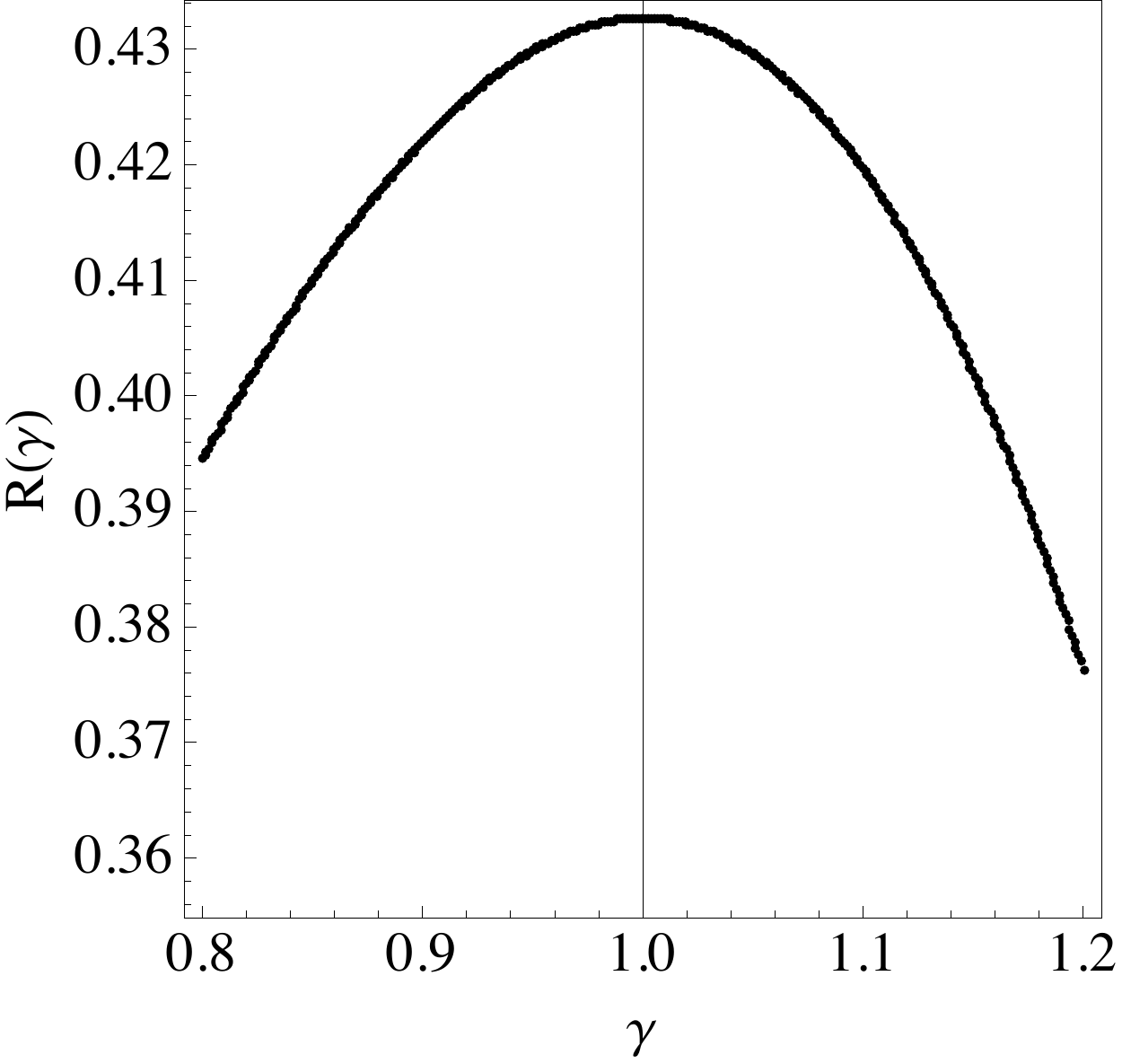}
 \caption{(left) The dispersion relation \eqref{eqn:dispRel} for a quarter wave stack where $b = \frac{n_h}{n_1} \frac{d}{2}$. There is a spectral band gap centered at $\omega = \frac{\pi}{d n_h}$ of width $\frac{4}{d n_h} \sin^{-1}\frac{|n_1 - n_2|}{n_1 + n_2}$. (right) For $n_1=1$ and $n_2=2$, we let $b = \gamma \frac{n_h}{n_1} \frac{d}{2}$ and plot $\gamma$ vs. $R(\gamma)$, the spectral gap to midgap ratio as defined in \eqref{eqn:gapmidgap}. We find that $R(\gamma)$ is maximal precisely when $\gamma=1$, the structure defined by the Bragg relation. }
 \label{fig:bragg}
 \end{center}
 \end{figure}

The Bragg relation given in Eq. \eqref{eqn:Bragg} can be interpreted to state that  constructive interference occurs when the path length of a reflected wave is equal to (a multiple of) the wavelength. In fact, the Bragg structure is optimal in the following sense. For fixed $n_1$, $n_2$ and $d$, let 
\begin{equation}
\label{eqn:bgamma}
b(\gamma) = \gamma  \frac{1}{4} \frac{2 \pi}{\omega n_{1}} = 
\gamma  \frac{1}{4} \lambda_{\text{eff}}, \qquad \gamma \in (0,1)
\end{equation}
define a class of periodic layered structures. The Bragg structure is defined by $b(1)$. Let $\omega_{1}$ and $\omega_{2}$ be the left and right edges of the first spectral bandgap. For a given layered media, define the spectral gap to midgap ratio by 
\begin{equation}
\label{eqn:gapmidgap}
R := \frac{ |\omega_{2} - \omega_{1}| }{ (\omega_{1} + \omega_{2})/2 }
\end{equation}
In Fig. \ref{fig:bragg}(right), we plot the the spectral gap to midgap ratio for the class of devices defined as in \eqref{eqn:bgamma}. The maximal gap to midgap ratio occurs for the Bragg structure ($\gamma = 1$). Lastly, we note that  the Bragg structure does not maximize the spectral gap width, given by $|\omega_{2} - \omega_{1}|$ \cite{OstingThesis}. 

\bibliographystyle{amsalpha}
\bibliography{optRes.bib}
\end{document}